\DeclareFixedFootnote{\repnote}{Dipartimento di Scienze Matematiche “Giuseppe Luigi Lagrange”, Dipartimento di Eccellenza 2018-2022, Politecnico di Torino, Corso Duca degli Abruzzi 24, 10129 Torino Italy.}
\theoremstyle{plain}
\newtheorem{thm}{Theorem}[section]
\newtheorem{cor}[thm]{Corollary}
\newtheorem{lem}[thm]{Lemma}
\newtheorem{prop}[thm]{Proposition}
\theoremstyle{definition}
\newtheorem{defn}[thm]{Definition} 
\newtheorem{oss}[thm]{Remark}
\theoremstyle{remark}
\DeclareMathOperator{\argmin}{argmin}
\DeclareMathOperator{\argmax}{argmax}
\DeclareMathOperator{\di}{d}
\DeclarePairedDelimiter\floor{\lfloor}{\rfloor}
\DeclarePairedDelimiter\ceil{\lceil}{\rceil}
\author{Matteo Levi$^1$\footnote{matteo.levi@polito.it}, Federico Santagati$^1$\footnote{federico.santagati@polito.it},  Anita Tabacco$^1$\footnote{anita.tabacco@polito.it} and Maria Vallarino$^1$\footnote{maria.vallarino@polito.it}}
\title{{\bf Analysis on trees with nondoubling flow measures}}
\date{\vspace{-5ex}}
\begin{document}

\unmarkedfntext{{\em 2020 Mathematics Subject Classification}: 05C05; 05C21; 30H10; 30H35; 43A99.\newline\hspace*{0.5em}{\em Keywords}: trees; nondoubling measure; Calder\'on--Zygmund theory; Hardy spaces; BMO spaces.\newline\hspace*{0.5em}$^1$: Dipartimento di Scienze Matematiche “Giuseppe Luigi Lagrange”, Dipartimento di Eccellenza 2018-2022, Politecnico di Torino, Corso Duca degli Abruzzi 24, 10129 Torino Italy.\newline}

\maketitle

  \begin{abstract}
 We consider trees with root at infinity endowed with {\emph{flow measures}}, which are nondoubling measures of at least exponential growth and which do not satisfy the isoperimetric inequality. In this setting, we develop a Calderón--Zygmund theory  and we define $BMO$ and Hardy spaces, proving a number of desired results extending the corresponding theory as known in more classical settings. 
\end{abstract}




\section{Introduction}

Let $T=(V,E)$ be a tree, where $V$ is the set of vertices and $E$ the set of edges. 
We do not put any a priori restriction on the combinatorics of the tree, in particular we do not require $T$ to be homogeneous. We fix an origin $o\in V$, a geodesic $g$ emanating from $o$ and the corresponding point $\zeta_g$ in the natural boundary $\partial T$ of the tree. This boundary point serves as a root of $T$, and induces a level structure on the tree. In this paper we consider \textit{flow measures} on the set of vertices $V$, i.e., functions $m:V\rightarrow \mathbb R^+$ whose value at each point can be reconstructed as the sum of the values the function takes at the \textit{sons} of that point. See Section \ref{sec: pr} for all the details.

The interest in such measures comes from the fact that there is a natural injective lifting procedure to map Borel measures on the boundary of the tree to measures on its vertices, and flow measures are exactly the range of such a lifting operator. Moreover, $p-$harmonic functions on trees can be characterized as appropriate potentials of flow measures (see \cite{CL}). The word flow is commonly used in Network Theory as such measures are well suited to model physical phenomena such as the circulation of traffic in the cities, of current in electrical circuits, of water in pipes et cetera. The conservation property characterizing flows is equivalent to the \textit{Kirchhoff's current law}: the total current received by a vertex must equal the total current leaving the vertex. The existence of (finite energy) flows on a graph has been proved to have outstanding theoretical importance, being intimately related to the transience and recurrence properties of the graph itself; for a thorough account on this perspective we refer to the beautiful book \cite{LP}.

The metric measure space $(T,d,m)$, where $T$ is a general infinite tree, $d$ the geodesic distance and $m$ a flow measure, is a concrete example of nondoubling metric space without growth restrictions on the measure. Our goal is to investigate some pieces of theory coming from classical Harmonic Analysis in this context; in particular, we study the behaviour of a Hardy-Littlewood maximal function, we describe a Calder\'on--Zygmund decomposition of integrable functions and we introduce Hardy and $BMO$ spaces on $(T,d,m)$, when $m$ is locally doubling.

\smallskip

The classical Calder\'on--Zygmund theory and the standard theory of Hardy and $BMO$ spaces \cite{CW, FS, S} were introduced in $(\mathbb R^n,d,\lambda)$, where $d$ is the 
Euclidean metric and $\lambda$ denotes the Lebesgue measure and more generally on spaces of homogeneous type, namely metric measure spaces $(X,d,\mu)$ where the 
doubling condition is satisfied, i.e., there exists
a constant $C$ such that
\begin{equation}\label{doubling}
\mu\bigl(B_{2r}(x)\bigr)
\leq C\, \mu\bigl(B_r(x)\bigr)
\qquad\forall x \in X\,, \quad\forall r >0,
\end{equation}
where $B_r(x)$ denotes the ball centred at $x$ of radius $r$. 

It is worth noticing that in the setting of (possibly weighted) graphs with the doubling property new Hardy and $BMO$ spaces associated with a discrete Laplacian were introduced in \cite{bui, buiduong, fe}; various characterizations of such spaces and applications to singular integrals were obtained.  

Extensions of the theory of singular integrals and of the Hardy and $BMO$ spaces have been considered also on various metric measure spaces not satisfying the doubling condition \eqref{doubling} but fulfilling some other measure growth assumption (see, e.g., \cite{CMM, MMV, MOV, NTV, T, To, V, Ve})\ 
or a geometric condition (see \cite{arthur}). We remark that the locally doubling flow measures we consider can grow at arbitrarily large rate.

Carbonaro, Mauceri and Meda \cite{CMM} defined a Hardy space adapted to any metric measure space which satisfies some geometric assumption, namely the local doubling  property, the isoperimetric property and the approximate midpoint property. Their theory does not apply to the spaces we consider since they do not satisfy the isoperimetric property.

We also mention that some results on homogeneous trees endowed with the counting measure, which is not a flow measure, have been obtained in the literature. More precisely, Naor and Tao \cite{tao} studied the boundedness of the Hardy--Littlewood maximal function with respect to the family of balls. The boundedness of singular integrals associated with the combinatorial Laplacian has been investigated in \cite{CMS}, while Celotto and Meda \cite{CM} studied various Hardy spaces in this context. In \cite{KPT} the authors introduced Hardy spaces of harmonic functions on nonhomogeneous trees; their theory  can be thought as an analog of the classical theory of Hardy spaces on the unit disc.


\smallskip

This paper generalizes in various directions the results in \cite{hs}, \cite{arditti} and \cite{ATV1} {concerning} 
a Calder\'on--Zygmund theory, Hardy and $BMO$ spaces on homogeneous trees endowed with a distinguished measure.  First of all, we consider general non-regular trees. Moreover, we consider a class of measures, i.e., the locally doubling flows, among which the measure considered in \cite{hs, arditti, ATV1} is a particular example. The definition of the family of admissible sets, which is a key ingredient to develop all the theory of the present paper, is strongly inspired by the one given by Hebisch and Steger in \cite{hs}, but is more general, even in their setting. Indeed, the shape of our sets is less rigid and this allows to obtain suitable decomposition and expansion algorithms which were not available in the setting of \cite{hs}. These algorithms are also a fundamental tool to prove some results involving Hardy and $BMO$ spaces which were missing in \cite{arditti, ATV1}.

\smallskip

The paper is organized as follows. In Section \ref{sec: pr} we introduce the notation and characterize the properties of being locally doubling, doubling and of exponential growth for flow measures. Moreover, we show that these measures never satisfy the isoperimetric property. We then restrict our attention to a tree $T$ endowed with a locally doubling flow measure $m$; this implies that $T$ is of uniformly bounded degree. In such a case, we introduce a family of subsets of $V$ called {\emph{admissible trapezoids}} (Section \ref{s: admtr}), which exhibits interesting geometric properties.  In particular, the Hardy--Littlewood maximal operator associated with such a family turns out to be of weak type $(1,1)$ and bounded on $L^p(m)$ for every $p\in (1,\infty]$. Moreover, we show that every integrable function admits a Calder\'on--Zygmund decomposition where the admissible trapezoids are involved. In Section \ref{s: BMOHardy} we introduce a Hardy space $H^1(m)$ and a space $BMO(m)$ related with the family of admissible trapezoids;  we prove that $BMO(m)$ can be identified with the dual of $H^1(m)$ and that a John--Nirenberg inequality holds in this setting. Finally, in Section \ref{s: int}, we show that such spaces satisfy good interpolation properties, both with respect to the real and the complex interpolation methods, so that they  can be used for endpoint boundedness results for integral operators. Applications of this theory to the study of boundedness of integral operators, like Riesz transforms  and spectral multipliers of suitable Laplacians, is work in progress.

\smallskip

Along the paper, constants appearing in different inequalities are sometimes related to each others. To highlight such connections we prefer to provide explicitly most of the constants. However, when the exact values are unimportant for us, we use the standard notation $A(x)\lesssim B(x)$ to indicate that there exists a positive constant $c$, independent from the variable $x$ but possibly depending on some involved parameters, such that $A(x)\leq c B(x)$ for every $x$.

\section{Preliminaries}\label{sec: pr}

\subsection{Trees with root at infinity}\label{subs: pr}

An unoriented graph is a vertex set $V$ endowed with a symmetric relation $\sim$. If $x\sim y$ we say there is an edge connecting $x$ to $y$, which we identify with the one connecting $y$ to $x$. The set of edges is denoted by $E$. Let $q(x)=|\lbrace y\in V: \ y\sim x\rbrace|-1$ denote the number of neighbours of $x$ (minus 1, for convenience of notation). We say that the graph is of \textit{bounded degree} if there exists a universal positive constant $q$ such that $q(x)\leq q$ for all $x\in V$. Consider a sequence of vertices $\lbrace x_j\rbrace$ such that $x_j\sim x_{j+1}$. This naturally identifies an associated sequence of edges $\lbrace e_j\rbrace$, where $e_j$ is the edge connecting $x_j$ to $x_{j+1}$. We say that $\lbrace x_j\rbrace$ is a \textit{path} if $\lbrace e_j\rbrace$ does not contain repeated edges. If the path $\gamma=\lbrace x_j\rbrace_{j=0}^n$ is finite, $x_0$ and $x_n$ are called the endpoints of $\gamma$. The geodesic distance $d(x,y)$ counts the minimum number of edges one has to cross while moving from $x$ to $y$ along a path. Any path realizing such a distance for every couple of vertices belonging to it is called a geodesic. We denote by $\Gamma$ the family of geodesics.

A graph is connected if every couple of vertices belongs to a path. Connected graphs having no paths with repeated vertices are called trees. In particular the relation $\sim$ is never transitive on a tree - there are no triangles. Also, it is clear that trees are uniquely geodesic spaces: for every couple of vertices $x,y$ in a tree $T$, there exists a unique path (which is necessarily a geodesic) having $x$ and $y$ as endpoints. Hence, without risk of confusion, we also denote by $[x,y]$ such a geodesic.

Let $T=(V,E)$ be a tree. We fix a distinguished point $o\in V$ which we call the \textit{origin} of the tree. We write $\Gamma_0$ for the family of half infinite geodesics having an endpoint in the origin, $\Gamma_0=\lbrace \gamma=\lbrace x_j\rbrace_{j=0}^\infty\in \Gamma, x_0=o\rbrace$. The \textit{boundary} of the tree $\partial T$ is classically identified with the set of labels corresponding to elements of $\Gamma_0$,
\begin{equation*}
    \partial T=\lbrace \zeta_\gamma: \ \gamma\in \Gamma_0\rbrace.
\end{equation*}
A point $z\in\overline{V}=V\cup \partial T$ can be chosen to play the role of \textit{root} of the tree. The role of such a point is to induce a partial order relation on $\overline{V}$, or more intuitively, to act as a base point for a radial foliation of the tree. We say that $x\geq y$ if and only if $x\in [z,y]$. We define the projection of $x$ on the geodesic $[o,z]$ as
\begin{equation*}
    \Pi_z(x)=\argmin_{y\in[o,z]} d(x,y),
\end{equation*}
and the \textit{level} of $x$ as
\begin{equation*}
    \ell(x)=d(o,\Pi_z(x))-d(\Pi_z(x),x).
\end{equation*}
In levels notation the rule for the order relation says that $x\geq y$ if and only if $\ell(x)-\ell(y)=d(x,y)$. Observe that if $x\leq o$, then $\ell(x)=-d(x,o)$. In particular, if one chooses the root to coincide with the origin, then the level of a point is just (minus) its distance from the origin, i.e., its radial coordinate, and the tree can be interpreted as a model for the unit disc $\mathbb{D}$. In this paper, however, we decide to fix the root as a point $\zeta_g\in \partial T$, $g$ being a distinguished half infinite geodesic starting at the origin. With this choice, the geometric interpretation of a level set in the unit disc is not so neat anymore. Instead, it is helpful  to switch to a half-plane model point of view; in analogy to the conformal trasformation of the unit disc onto the upper-half plane, mapping $\partial \mathbb{D}\setminus \lbrace\zeta\rbrace$ to $\mathbb{R}$ and $\zeta$ to the point at infinity, we can interpret the tree rooted at $\zeta_g$ as a conformal copy of the one rooted at the origin, and its boundary as a representation of the Riemann sphere. Following this point of view, hereinafter we will write $\partial T$ for $\lbrace \zeta_\gamma\in \Gamma_0\setminus\lbrace\zeta_g\rbrace\rbrace$ and interpret $\zeta_g$ as a separate special point (the point at infinity). It is easily seen that, with the upper-half plane model in mind, a level set plays the role of a line parallel to the real axis, which in the disc model would be an horocycle tangent to the boundary point $\zeta_g$, and the level of a point plays the role of $y-$coordinate in the parametrization of the tree.

We define some further  notation that will be useful in the treatment. The \textit{predecessor} of $x$ is the unique vertex $p(x)$ such that $x\sim p(x)$ and $\ell(p(x))=\ell(x)+1$, while $y$ is a \textit{son} of $x$ if it belongs to the set $s(x)=\lbrace y\sim x: \ \ell(y)=\ell(x)-1\rbrace$. Observe that $|s(x)|=q(x)$.
We define the \textit{confluent} of $x,y\in \overline{V}$ to be the point
\begin{equation*}
\begin{split}
    x\wedge y&=\argmax\lbrace \ell(z): \ z\in [x,y]\rbrace=\argmin\lbrace \ell(z): \ z\geq x, z\geq y\rbrace.
\end{split}
\end{equation*}
Observe that the level function can be written as $\ell(x)=d(x\wedge o,o)-d(x\wedge o,x)$. The \textit{tent} rooted in $x$ is the set  $V_x=\lbrace y\in V: \ y\leq x\rbrace$ and we denote by $\partial T_x$ its boundary, $\partial T_x=\lbrace\zeta\in \partial T: \ \zeta\leq x\rbrace=\lbrace\zeta\in \partial T: \ \zeta\wedge x=x\rbrace$. The family $\lbrace \partial T_x\rbrace_{x\in V}$ can be used as a basis for the topology on $\partial T$. Borel measures on the boundary can then be considered, accordingly.

Finally, we introduce the following convenient notation which will be widely used throughout the paper: whenever we fix a vertex $x_0$, we denote by $x_k$ its $k-$th predecessor, namely $x_k=p(x_{k-1})$ for any integer $k\geq 1$. 
Clearly $x_k=x_k(x_0)$ depends on $x_0$, but since the basis point $x_0$ will always be clear from the context we will simply write $x_k$.

\subsection{Flow measures}

From now on, $T$ will always denote a tree rooted at $\zeta_g$, endowed with the level structure described in the previous section, and $V$ the set of its vertices. Since $V$ is a discrete set, every function on $V$ defines a measure. With some abuse of notation, given a function $m:V\to\mathbb{R}$ we denote by $m$ also the associated measure, given by
\begin{equation*}
    m(A)=\sum_{x \in A}m(x), \quad A\subseteq V. 
\end{equation*} Given a function $f:V \to \mathbb{C}$ and a set $A \subset V$, we define $\displaystyle\int_A f \ dm = \sum_{y \in A} f(y) m(y)$.\\We say that $m$ is a \textit{flow} if,  $\forall x\in V$, it holds
\begin{equation*}
    m(x)=\sum_{y\in s(x)}m(y).
\end{equation*}
Flow measures on $V$ are special in the fact that they are in a 1-1 relation with Borel measures on the boundary of the tree.
More precisely, any flow measure $m$ can be extended to a measure on $\partial T$ through the correspondence
\begin{equation}\label{measure boundary}
    m(\partial T_x)=m(x),
\end{equation}
and conversely, if $m$ is a non-negative Borel measure on $\partial T$, then the function $m:V\to \mathbb{R}$ defined by \eqref{measure boundary} is a flow (by the additivity property of measures). Observe that, if a non-negative flow vanishes at a point $x$, then it vanishes on the whole tent $V_x$. Throughout the paper, flow measures will be implicitly assumed to be strictly positive.

We are interested in the relation between flow measures and the doubling property. Let $S_r(x_0)=\lbrace x\in V: \ d(x,x_0)=r\rbrace$ and $B_r(x_0)=\lbrace x\in V: \ d(x,x_0)\leq r\rbrace$ be, respectively, the sphere and the ball of radius $r\in \mathbb{N}$ centered at $x_0\in V$ with respect to the geodesic distance metric. We say that $m$ is a \textit{locally doubling measure} on $V$ if, for every $r>0$, there exists a constant $C_r> 1$ such that
\begin{equation}\label{locally doubling}
    m(B_{2r}(x_0))\leq C_r m(B_r(x_0)), \quad \forall x_0\in V.
\end{equation}
If there exists a universal constant $C> 1$ such that for every $r>0$, relation \eqref{locally doubling} holds with $C_r=C$, then the measure $m$ is said (globally) \textit{doubling}.

Next technical lemma provides explicit expressions for the mass of spheres and balls for a general flow measure and useful upper and lower bounds for the ratio between measures of balls. 

\begin{lem}\label{lem}
Let $m$ be a flow measure. Fix $x_0\in V$ and, for $k\geq 0$, let $x_{k+1}=p(x_k)$. For every $r\in \mathbb{N}$, it holds
\begin{itemize}
    \item[(i)]  $m(S_r(x_0))=m(x_{r-1})+m(x_r)$;
    \item[(ii)] $ m(B_r(x_0))=2\displaystyle\sum_{j=0}^{r-1}m(x_j)+m(x_r)$;
    \item[(iii)] $\displaystyle\frac{m(x_{2r})}{(2r+1)m(x_r)}\leq \frac{m(B_{2r}(x_0))}{m(B_r(x_0))} \leq\frac{(4r+1)m(x_{2r})}{m(x_r)}$.
\end{itemize}

\end{lem}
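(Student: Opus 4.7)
The plan for (i) is to partition the sphere $S_r(x_0)$ according to the confluent with the ascending geodesic $(x_0, x_1, x_2, \dots)$. Every $y$ with $d(x_0,y)=r$ satisfies $x_0\wedge y = x_k$ for a unique $k\in\{0,1,\dots,r\}$: the path from $x_0$ to $y$ first climbs up to $x_k$ and then descends to $y$, so $y\leq x_k$, $d(x_k,y)=r-k$, and $y\not\leq x_{k-1}$ whenever $k\geq 1$. Set $C_k=\{y\in S_r(x_0):x_0\wedge y=x_k\}$; these sets form a disjoint cover of $S_r(x_0)$.

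The computational tool I would introduce first is the iterated flow identity
\begin{equation*}
\sum_{y\leq x,\;d(x,y)=j} m(y) \;=\; m(x)\qquad\forall x\in V,\ j\geq 0,
\end{equation*}
which follows by a one-line induction on $j$ from the defining relation $m(x)=\sum_{z\in s(x)}m(z)$. Applying it to $x_k$ with exponent $r-k$, and to $x_{k-1}$ with exponent $r-k-1$ (note that $y\leq x_{k-1}$ and $d(x_k,y)=r-k$ force $d(x_{k-1},y)=r-k-1$), and then subtracting, yields $m(C_k)=m(x_k)-m(x_{k-1})$ for $1\leq k\leq r-1$. The two boundary strata are handled directly: $m(C_0)=m(x_0)$ by the identity, and $C_r=\{x_r\}$ so $m(C_r)=m(x_r)$. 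Summing and telescoping gives
\begin{equation*}
m(S_r(x_0))=m(x_0)+\bigl(m(x_{r-1})-m(x_0)\bigr)+m(x_r)=m(x_{r-1})+m(x_r),
\end{equation*}
which is (i). Assertion (ii) is then pure bookkeeping: expand $m(B_r(x_0))=\sum_{k=0}^{r}m(S_k(x_0))$ using (i), notice that each $m(x_j)$ with $1\leq j\leq r-1$ appears twice while $m(x_0)$ and $m(x_r)$ are collected only once, and rearrange.

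For (iii) the key observation is monotonicity: since $x_j\in s(x_{j+1})$ and $m$ is a flow, $m(x_{j+1})=\sum_{z\in s(x_{j+1})}m(z)\geq m(x_j)$, so $\{m(x_j)\}_{j\geq 0}$ is nondecreasing. Feeding this into the explicit formula from (ii), every summand in $m(B_r(x_0))$ lies between $m(x_0)$ and $m(x_r)$; in particular $m(x_r)\leq m(B_r(x_0))\leq(2r+1)m(x_r)$ and the analogous estimate $m(x_{2r})\leq m(B_{2r}(x_0))\leq(4r+1)m(x_{2r})$ holds. Taking the ratio in both directions produces exactly the two-sided inequality in (iii).

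The only delicate point is the partition argument in (i): the boundary strata $k=0$ and $k=r$ must be treated separately from the telescoping bulk, and one must keep track that subtracting the mass below $x_{k-1}$ corresponds to shifting the exponent in the iterated flow identity by one. Once that is in place, (ii) and (iii) are immediate algebraic consequences.
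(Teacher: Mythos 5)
Your proof is correct and follows essentially the same route as the paper: your confluent classes $C_k$ coincide with the paper's level slices $S_r^{l(k)}(x_0)$, the iterated flow identity you state explicitly is exactly what the paper uses implicitly when writing $m(S_r^{l(k)}(x_0))=m(x_k)-m(x_{k-1})$, and parts (ii) and (iii) are obtained by the same summation over spheres and the same monotonicity of $j\mapsto m(x_j)$. One tiny slip in the bookkeeping for (ii): $m(x_0)$ in fact appears \emph{twice} (once from $S_0(x_0)$ and once from $S_1(x_0)$), not once, which is what produces the coefficient $2$ on $m(x_0)$ in the stated formula $2\sum_{j=0}^{r-1}m(x_j)+m(x_r)$.
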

\begin{proof}
Define the $l-$level slice of the sphere $S_r(x_0)$ as $S_r^l(x_0)=S_r(x_0)\cap\lbrace x\in V: \ \ell(x)=l\rbrace$, and set $l(k)=\ell(x_0)-r+2k$. Then 
\begin{equation*}
    S_r(x_0)=\bigcup_{k=0}^{r}S_r^{l(k)}(x_0).
\end{equation*}
It's easily seen that
\begin{align*}
&S^{l(0)}_r(x_0)=\{x \in V \ : x \le x_0, \ell(x)=l(0)\}, \\ 
    &S_r^{l(k)}(x_0)
    =\lbrace x\in V: \ x\leq x_k, \ell(x)=l(k)\rbrace\setminus \lbrace x\in V: \ x\leq x_{k-1}\rbrace, \qquad \text{for} \  k>1.
\end{align*}
Observe that $m(S_r^{l(0)}(x_0))=m(x_0)$, $S_r^{l(r)}(x_0)=\{x_r\}$ and, for $1\leq k \leq r-1$, $S_r^{l(k)}(x_0)\neq\emptyset$ if and only if $q(x_k)\geq 2$. If $m$ is a flow measure, then
\begin{equation*}
    m(S_r^{l(k)}(x_0))=m(s(x_k))-m(s(x_{k-1}))= m(x_k)-m(x_{k-1}), \quad 1\leq k \leq r-1,
\end{equation*}
which equals zero if $q(x_k)=1$, as expected. The flow measure of the sphere, for $r\geq 1$, is then given by
\begin{equation}\label{mass sphere}
    m(S_r(x_0))= \sum_{k=0}^r m(S_r^{l(k)}(x_0))=m(x_0)+m(x_r)+\sum_{k=1}^{r-1} m(x_k)-m(x_{k-1})
    =m(x_{r-1})+m(x_r).
\end{equation}
We can now derive the flow measure of the ball considering its foliation by means of spheres,
\begin{equation}\label{mass ball}
    m(B_r(x_0))=\sum_{j=0}^r m(S_j(x_0))= m(x_0)+\sum_{j=1}^r m(x_{j-1})+m(x_j)=2\sum_{j=0}^{r-1}m(x_j)+m(x_r).
\end{equation}
Clearly $m(B_r(x_0))\geq \sum_{j=0}^rm(x_j)$, and we derive
\begin{equation*}
       \frac{m(x_{2r})}{(2r+1)m(x_r)}\leq \frac{\displaystyle \sum_{j=0}^{2r}m(x_j)}{2\displaystyle \sum_{j=0}^{r-1}m(x_j)+m(x_r)} \leq \frac{m(B_{2r}(x_0))}{m(B_r(x_0))} \leq \frac{2\displaystyle \sum_{j=0}^{2r-1}m(x_j)+m(x_{2r})}{\displaystyle \sum_{j=0}^r m(x_j)}\leq\frac{(4r+1)m(x_{2r})}{m(x_r)}.
\end{equation*}
\end{proof}
Next proposition  gives two properties which are equivalent to the locally doubling condition and more convenient to use in practice.
\begin{prop}\label{ldc}
Let $m$ be a flow measure. The following  are equivalent.
\begin{itemize}
    \item[(i)] The measure $m$ is locally doubling.
    \item[(ii)] There exists a constant $c> 1$ such that
                \begin{align}\label{control sons}
                    &m(x)\le c m(y), \ \ \ \ \qquad \forall x\in V, y\in s(x), \\ 
                 \label{lower bound}
                                     & m(x) \ge \frac{c}{c-1}m(y), \  \quad \forall x \in V \ with \ q(x) \ge 2, y \in s(x).
                \end{align}
\end{itemize}
\end{prop}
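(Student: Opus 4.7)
The plan is to invoke Lemma~\ref{lem}(iii), which sandwiches the ratio $m(B_{2r}(x_0))/m(B_r(x_0))$ between constant multiples of the single ratio $m(x_{2r})/m(x_r)$, thereby reducing the doubling question to a pointwise control of ratios of the form $m(p(v))/m(v)$. Combined with the flow identity $m(x)=\sum_{y\in s(x)}m(y)$, both implications then become short algebraic manipulations.

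For (ii) $\Rightarrow$ (i), only \eqref{control sons} is needed. Since $x_k\in s(x_{k+1})$ by the definition of the chain of predecessors, \eqref{control sons} applied to the pair $(x_{k+1},x_k)$ yields $m(x_{k+1})\leq c\,m(x_k)$ for every $k\geq 0$, and telescoping over $k=r,\dots,2r-1$ gives $m(x_{2r})\leq c^{r}m(x_r)$. Substituting into the right inequality in Lemma~\ref{lem}(iii) produces
$$
\frac{m(B_{2r}(x_0))}{m(B_r(x_0))}\leq (4r+1)\,c^{r}\qquad \forall\,x_0\in V,\ \forall\,r\in\mathbb{N},
$$
which is \eqref{locally doubling} with $C_r=(4r+1)c^{r}$.

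For (i) $\Rightarrow$ (ii), I would first record the elementary but essential observation that, since $m>0$ and $m(x)=\sum_{y\in s(x)}m(y)$, every vertex has at least one son; in particular every vertex lies in the image of $p$. Now given $x\in V$ and $y\in s(x)$, choose any $z\in s(y)$ (which exists by the preceding remark) and apply Lemma~\ref{lem}(iii) with $r=1$ at base point $x_0=z$: the left inequality combined with \eqref{locally doubling} yields $m(x_2)\leq 3C_1\,m(x_1)$, which in this parametrization reads $m(x)\leq 3C_1\,m(y)$. This is \eqref{control sons} with $c=3C_1$. Finally, \eqref{lower bound} is a short consequence of \eqref{control sons} and the flow identity: if $q(x)\geq 2$ and $y\in s(x)$, pick a sibling $y'\in s(x)\setminus\{y\}$, apply \eqref{control sons} to the pair $(x,y')$ to get $m(y')\geq m(x)/c$, and use $m(x)\geq m(y)+m(y')$ to deduce $m(x)-m(y)\geq m(x)/c$, i.e., $m(x)\geq \frac{c}{c-1}m(y)$, with the same constant $c=3C_1$.

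No real analytic obstacle arises; the argument is essentially bookkeeping with indices and a telescoping product, built on the two-sided control of Lemma~\ref{lem}(iii). The one subtle ingredient worth emphasising is the strict positivity of the flow, which guarantees that every vertex has a son and thus that the base point $z\in s(y)$ required in the (i) $\Rightarrow$ (ii) step is available.
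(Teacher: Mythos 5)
Your proof is correct and follows essentially the same route as the paper: both directions hinge on Lemma~\ref{lem}(iii), with the key step for (i)~$\Rightarrow$~(ii) being the choice of $z\in s(y)$ and the estimate $C_1\geq m(B_2(z))/m(B_1(z))\geq m(x)/(3m(y))$, and with \eqref{lower bound} then extracted from \eqref{control sons} via the flow identity. Your version is marginally cleaner in two spots — you read off $c=3C_1$ directly where the paper argues by contradiction, and you use a single sibling $y'$ where the paper sums over all of $s(x)\setminus\{y\}$ (the latter also yielding the degree bound $q(x)\leq c$ recorded as Corollary~\ref{corollario bounded degree}, which your argument does not need) — but these are cosmetic.
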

\begin{proof}
 Assume \eqref{control sons} holds. Then for any $x_0\in V$, $r>0$, from Lemma \ref{lem}  we have
\begin{equation*}
        \frac{m(B_{2r}(x_0))}{m(B_r(x_0))}\leq (4r+1)c^r=C_r,
\end{equation*}
hence $m$ is locally doubling. \\ 
Conversely, let $m$ be a locally doubling flow. Then, for every $x\in V$, $y\in s(x)$ and $z\in s(y)$, again from \eqref{locally doubling} and Lemma \ref{lem},
\begin{equation}\label{star}
        C_1\geq\frac{m(B_{2}(z))}{m(B_1(z))}\geq\frac{m(x)}{3m(y)}.
\end{equation}
If \eqref{control sons} would not hold, we could find two sequences of vertices $\lbrace x _j\rbrace$ and $\lbrace y _j\rbrace$, with $y_j\in s(x_j)$, such that $\limsup_j m(x_j)/m(y_j)=+\infty$, contradicting \eqref{star}.
Moreover,  inequality \eqref{control sons} implies that
\begin{equation*}
        m(x)=\sum_{y\in s(x)}m(y)\geq\frac{1}{c}\sum_{y\in s(x)}m(x)=\frac{q(x)m(x)}{c}.
\end{equation*} Hence, $q(x) \le c$ for all $x \in V$.  
Then, for $x\in V$ with $q(x)\geq 2$ we have
\begin{equation*}
    m(x)=m(y)+\sum_{z\in s(x)\setminus \lbrace y\rbrace}m(z)\geq m(y)+\frac{q(x)-1}{c}m(x),
\end{equation*}
from which it follows that
\begin{equation*}
    m(x)\geq \frac{c}{c-q(x)+1}m(y)\geq \frac{c}{c-1}m(y).
\end{equation*}
This completes the proof.
\end{proof}
In the previous argument it is proved between the lines a fact which is itself important and deserves some attention. We restate it here as a corollary.
\begin{cor}\label{corollario bounded degree} If $T$ admits a locally doubling flow measure, then 
\begin{equation*}
    q(x)\leq c, \quad \forall x\in V,
\end{equation*}
with the same constant $c$ as in \eqref{control sons}.
\end{cor}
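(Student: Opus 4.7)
The plan is to extract the bound on $q(x)$ directly from what was already established inside the proof of Proposition \ref{ldc}, where it appears as an intermediate computation. Since the constant $c$ in the statement is the same one appearing in \eqref{control sons}, I would simply invoke that inequality (which holds for every locally doubling flow by the proposition itself) and combine it with the defining flow identity.

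More precisely, first I would note that by Proposition \ref{ldc}, the local doubling assumption guarantees \eqref{control sons}: $m(x) \leq c\, m(y)$ for every $x \in V$ and every $y \in s(x)$. Rearranging, $m(y) \geq m(x)/c$. The hard part—if there is one—is purely notational: making sure we do not use the implication the other way round, since \eqref{control sons} was derived from local doubling in the course of proving Proposition \ref{ldc}, and now we are reusing it.

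Next I would apply the flow identity $m(x) = \sum_{y \in s(x)} m(y)$ and plug in the pointwise lower bound just obtained, getting
\begin{equation*}
    m(x) \;=\; \sum_{y \in s(x)} m(y) \;\geq\; \sum_{y \in s(x)} \frac{m(x)}{c} \;=\; \frac{q(x)\, m(x)}{c}.
\end{equation*}
Since $m$ is (implicitly) strictly positive on $V$, we may divide by $m(x)>0$ to conclude $q(x) \leq c$, as required. This is exactly the estimate that appeared as an intermediate step in the proof of Proposition \ref{ldc}, so no new ingredient is needed—the corollary is really just an isolation of that observation.
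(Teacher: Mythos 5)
Your proof is correct and is exactly the paper's argument: the authors explicitly note that the corollary is "proved between the lines" of Proposition \ref{ldc}, using \eqref{control sons} together with the flow identity $m(x)=\sum_{y\in s(x)}m(y)\geq q(x)m(x)/c$ and dividing by $m(x)>0$. Your care about the direction of implication (locally doubling $\Rightarrow$ \eqref{control sons}) is appropriate and there is no circularity.
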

Observe that the opposite is not true, i.e., not every flow on a tree of bounded degree is locally doubling. In fact, it is clear that any measure $m$ with a super exponential growth along the geodesic $g$, so not satisfying \eqref{control sons}, can be defined outside $g$ in such a way to be a flow.
\begin{oss}\label{remark on c}
Note that unless $T=\mathbb{Z}$, namely the trivial tree where each vertex has exactly two neighborhoods (a predecessor and a son), in fact the constant $c$ in \eqref{control sons} must be greater or equal than 2, as a consequence of Corollary \ref{corollario bounded degree}.
\end{oss}
Remarkably, it turns out that trees with root at infinity do not admit doubling flow measures, unless almost all of their vertices have only one son. Let $n:\Gamma \to \mathbb{N}$ be the function counting the number of vertices having at least two sons along each geodesic,
\begin{equation*}
    n(\gamma)=|\lbrace y\in \gamma: \ q(y)\geq 2\rbrace|.
\end{equation*}
We have the following characterization.

\begin{thm}\label{teorema caratterizzazione alberi con flussi doubling}
A tree $T$ rooted at infinity admits a doubling flow measure if and only if it has uniformly bounded degree and
\begin{equation}\label{sup_geodetiche}
    \sup_{\gamma\in \Gamma} n(\gamma)=M<\infty.
\end{equation}
\end{thm}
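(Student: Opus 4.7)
I would treat the two implications separately.

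For the $(\Leftarrow)$ direction, my plan is to construct an explicit doubling flow. The natural candidate is $m(x) := |\partial T_x|$, the number of infinite descending geodesics starting at $x$. Under the bounded branching hypothesis, any such geodesic meets at most $M$ vertices with $q\ge 2$, each of which has at most $c$ sons, so $1\le m(x)\le c^{M}$ (the lower bound uses that every vertex must have at least one son, which is forced by the strict positivity of any candidate flow). The identity $m(x)=\sum_{y\in s(x)}m(y)$ is immediate from the bijection between descending geodesics from $x$ and pairs (son $y$, descending geodesic from $y$). Combining the two-sided bound $1\le m\le c^M$ with Lemma~\ref{lem} gives $r+1\le m(B_r(x_0))\le(2r+1)c^{M}$ uniformly in $x_0$, so $m(B_{2r}(x_0))/m(B_r(x_0))$ is bounded by a constant depending only on $c$ and $M$.

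For the $(\Rightarrow)$ direction, doubling trivially implies local doubling, so Corollary~\ref{corollario bounded degree} already supplies the uniform degree bound $q\le c$. To derive the bound on $n(\gamma)$ I would argue by contradiction: suppose some geodesic $\gamma$ contained infinitely many branch vertices. One of the two halves of $\gamma$ must host infinitely many of them; without loss of generality let this be the half rising towards $\zeta_g$, fix a base vertex $v$ on it, and enumerate the branch ancestors of $v$ as $v_{k_1},v_{k_2},\ldots$ with $k_1<k_2<\cdots$. Iterating the lower bound \eqref{lower bound} in Proposition~\ref{ldc} yields $m(v_{k_n})\ge \alpha^{n}\,m(v)$ with $\alpha:=c/(c-1)>1$. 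On the other hand, iterating the doubling inequality produces the polynomial volume control $m(B_{r}(v))\lesssim r^{\log_{2}C}\,m(B_{1}(v))$, and since $\{v_{k_n}\}\subset B_{k_n}(v)$ we have $m(v_{k_n})\le m(B_{k_n}(v))$. Combining these two estimates should force $n$ to remain bounded.

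The \textbf{main obstacle} lies squarely in the forward direction: the naive comparison between the exponential lower bound $\alpha^{n}$ and the polynomial upper bound $k_n^{\log_{2}C}$ only yields $n\lesssim \log k_n$, which a priori still permits infinitely many branches spaced super-geometrically along $\gamma$. Closing the gap will require sharpening the doubling bound by an appropriate choice of centre and radius --- plausibly centring the balls at a vertex $v$ sitting deep inside a descending arm off $\gamma$, so that $m(B_{1}(v))$ is of order one while $m(v_{k_n})$ is already very large, thereby exploiting the asymmetry between the local flow growth across branch points and the global polynomial growth of balls to strengthen the comparison into a genuinely finite uniform bound on $n(\gamma)$.
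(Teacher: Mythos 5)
Your $(\Leftarrow)$ direction is correct and takes a slightly different route from the paper's: the paper builds the equal\mbox{-}splitting flow $m(p(x))=q(p(x))\,m(x)$, checks it is locally doubling, and then invokes Proposition~\ref{prop: locally doubling is doubling}, whereas your flow $m(x)=|\partial T_x|$ is uniformly pinched between $1$ and $c^{M}$, so by Lemma~\ref{lem}(ii) every ball has measure comparable to its radius and doubling is immediate. Both constructions tacitly use that every vertex has at least one son (otherwise no strictly positive flow exists on $T$ at all), so this is not a defect specific to your argument.

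The $(\Rightarrow)$ direction, however, is genuinely incomplete, and you have correctly located the gap: comparing the exponential lower bound $m(v_{k_n})\ge\alpha^{n}m(v)$ with the polynomial volume bound obtained by iterating doubling only gives $n\lesssim\log k_n$, which is consistent with infinitely many branch points spaced super\mbox{-}geometrically along $\gamma$. This route cannot be patched as stated; the paper's argument uses a \emph{single} doubling step at a radius tied to the pair of vertices being compared, not an iterated polynomial volume estimate. Concretely: given $x<z$, set $r=2d(x,z)$ and descend $r$ steps from $x$ to a vertex $x_0$ (possible since there are no leaves). By Lemma~\ref{lem}(ii) and the monotonicity of a flow along ancestor chains, $m(B_r(x_0))\le(2r+1)\,m(x)$, while $m(B_{2r}(x_0))\ge 2\sum_{j=3r/2}^{2r-1}m(x_j)\ge r\,m(z)$ because $x_{3r/2}=z$. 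One application of doubling then yields $m(z)\le 3C\,m(x)$ \emph{uniformly over all ancestor pairs}, with no dependence on $d(x,z)$; combined with $m(z)\ge\bigl(c/(c-1)\bigr)^{n([p(x),z])}m(x)$ from \eqref{lower bound}, this bounds $n$ on every upward segment by $\log_{c/(c-1)}(3C)$, and hence on every geodesic after splitting it at the confluent point. Your closing remark about centring balls deep inside a descending arm points in the right direction, but the essential missing idea is that the radius must scale with $d(x,z)$, so that a single doubling inequality at that scale directly compares $m(z)$ with $m(x)$.
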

{Observe that it is enough to take the supremum in \eqref{sup_geodetiche} over doubly infinite geodesics having one of the endpoints in $\zeta_g$: in fact, if $\zeta, \eta\in \partial T\setminus \lbrace \zeta_g \rbrace$, then clearly $n([\zeta, \eta])\leq n([\zeta, \zeta_g])+n([\eta, \zeta_g])$.} The above theorem is an immediate consequence of the following proposition, which characterizes trees on which the locally doubling condition implies the doubling condition for flows.
\begin{prop}\label{prop: locally doubling is doubling}
A locally doubling flow measure on $T$ is doubling if and only if \eqref{sup_geodetiche} holds.
\end{prop}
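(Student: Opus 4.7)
The plan is to prove both implications using the identity $m(B_r(x_0)) = 2\sum_{j=0}^{r-1} m(x_j) + m(x_r)$ of Lemma~\ref{lem}(ii) together with Proposition~\ref{ldc}, which says that along any predecessor chain $\{x_j = p^j(x_0)\}_{j\ge 0}$ the mass is constant except at branching vertices, where it jumps by a factor in $[\tfrac{c}{c-1}, c]$; in particular the mass is non-decreasing going up and non-increasing going down.

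For the direction $\sup_\gamma n(\gamma) \le M \Rightarrow$ doubling, every predecessor chain contains at most $M$ branchings, so $m(x_0) \le m(x_j) \le c^M m(x_0)$ for every $j \ge 0$. Substituting these bounds into Lemma~\ref{lem}(ii) yields $(2r+1)m(x_0) \le m(B_r(x_0)) \le (2r+1) c^M m(x_0)$, and the doubling ratio is therefore bounded by $2 c^M$, uniformly in $x_0$ and $r$.

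For the converse, suppose $m$ is doubling with constant $C$; the goal is to show any upward ray contains at most $M=M(c,C)$ branching vertices. A direct comparison of balls centered at $x_0$ alone would only give $N \lesssim \log j_N$ (where $j_N$ is the level of the $N$-th branching above $x_0$), which is compatible with $n(\gamma)=\infty$ along a suitably sparse branching pattern. The decisive step is instead to center the comparison at a vertex $z$ obtained by descending $r := 2 j_N$ steps from $x_0$ through an arbitrary choice of sons. Since the flow is non-increasing along any descent, Lemma~\ref{lem}(ii) applied at $z$ immediately yields $m(B_r(z)) \le (2r+1) m(x_0)$. On the other hand, the predecessor chain of $z$ passes through $x_0$ at step $r$ and through $x_k$ at step $r+k$, so Lemma~\ref{lem}(ii) combined with the lower bound $m(x_k) \ge (\tfrac{c}{c-1})^N m(x_0)$ for $k \ge j_N$ (obtained by iterating Proposition~\ref{ldc}) produces
\[
m(B_{2r}(z)) \ge 2 \sum_{k=j_N}^{r-1} m(x_k) \ge 2(r - j_N)\bigl(\tfrac{c}{c-1}\bigr)^N m(x_0) = r \bigl(\tfrac{c}{c-1}\bigr)^N m(x_0).
\]
The doubling inequality $m(B_{2r}(z)) \le C\, m(B_r(z))$ then forces $(\tfrac{c}{c-1})^N \le 3C$, i.e.\ $N \le \log_{c/(c-1)}(3C) =: M$.

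To conclude, I would lift this bound from upward rays to arbitrary geodesics via the remark after Theorem~\ref{teorema caratterizzazione alberi con flussi doubling}: any geodesic $\gamma$ with one endpoint at $\zeta_g$ is the increasing union of the upward rays emanating from its vertices $y_{-k}$ as $k\to\infty$, each containing all branchings of $\gamma$ at levels $\ge \ell(y_{-k})$ and hence at most $M$ of them; letting $k\to\infty$ yields $n(\gamma) \le M$. The core obstacle is identifying this asymmetric \emph{descend-then-double} comparison: doubling gives only a polynomial-growth estimate on $m(B_r(x_0))$, which is too coarse to bound $N$, but the flow property ensures that mass decays downward, so $B_r(z)$ is small while $B_{2r}(z)$ is forced to absorb the full exponential growth of mass above $x_0$.
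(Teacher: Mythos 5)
Your proof is correct and follows essentially the same route as the paper: the forward direction uses Lemma~\ref{lem}(ii) together with the bound $m(x_0)\le m(x_j)\le c^M m(x_0)$ coming from Proposition~\ref{ldc}, and your ``descend-then-double'' comparison in the converse is exactly the paper's choice of a base point $x_0<x$ with $d(x_0,x)=2\,d(x,z)$, yielding the same bound $N\le\log_{c/(c-1)}(3C)$. The only differences are cosmetic (you avoid the paper's separate $r=1$ case in the forward direction, and you bound $n$ directly rather than passing through $m(z)\le 3C\,m(x)$).
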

\begin{proof}
Let \eqref{sup_geodetiche} hold, $m$ be a locally doubling flow and $c$ be the constant in \eqref{control sons}.  Then for every $x_0\in V, r \ge 2$, it holds
\begin{equation*}
    \frac{m(B_{2r}(x_0))}{m(B_r(x_0))}\leq \frac{(4r+1)m(x_{2r})}{(r-1)m(x_{\ceil{r/2}})}\leq  \frac{(4r+1)c^Mm(x_{\ceil{r/2}})}{(r-1)m(x_{\ceil{r/2}})}\leq 9c^M.
\end{equation*} If $r=1$, we easily get the uniform boundedness of $\frac{m(B_{2}(x_0))}{m(B_1(x_0))}$ by the definition of locally doubling measure. Hence, $m$ is doubling.

Conversely, let $m$ be a doubling flow, $C$ the doubling constant and $x,z\in V$ with $z>x$. Choose $x_0<x$ such that $r=d(x_0,x)=2d(x,z)$. Then,
\begin{equation*}
    C\geq    \frac{m(B_{2r}(x_0))}{m(B_r(x_0))}\geq\frac{2\sum_{j=3r/2}^{2r-1}m(x_j)}{(2r-1)m(x)}\geq\frac{rm(z)}{(2r-1)m(x)}.
\end{equation*}
Hence, $m(z)\leq 3 C m(x)$. On the other hand, $m$ is locally doubling, so by \eqref{lower bound} $m(z)\geq k^{n([p(x),z])} m(x)$. Then, $n([p(x),z])\leq \log_k(3C)$. By the generality of $x$ and $z$ the result follows.
\end{proof}
\begin{proof}[Proof of Theorem \ref{teorema caratterizzazione alberi con flussi doubling}]
Clearly if $T$ admits a doubling measure then it must have bounded degree by Corollary \ref{corollario bounded degree} and satisfies \eqref{sup_geodetiche} by Proposition \ref{prop: locally doubling is doubling}. Conversely, let $T$ be a tree satisfying \eqref{sup_geodetiche} and suppose that $q(x)\leq c$ for every $x\in V$. Then any measure $m$ satisfying $m(p(x))=q(p(x))m(x)$ at every vertex $x$ is a locally doubling flow since $m(p(x))\leq c m(x)$. We conclude by Proposition \ref{prop: locally doubling is doubling}.
\end{proof}

\begin{defn}
We say that $m$ has at least exponential growth if, for all $x_0 \in V$, there exist $r_0 = r_0(x_0) \in \mathbb{N}$, $\beta = \beta(x_0)>0$ and $\alpha = \alpha(x_0) > 0$ such that $m(B_r(x_0)) \geq \beta e^{\alpha r}$ for all $r > r_0$.
\end{defn}

\begin{prop}
Let $m$ be a locally doubling flow. Then $m$ has at least exponential growth if and only if for all $x_0 \in V$ there exist $r_0 = r_0(x_0) \in \mathbb{N}$ and $\alpha = \alpha(x_0) > 0$ such that $n([x_1,x_r])\geq \alpha r$ for all $r > r_0$.
\end{prop}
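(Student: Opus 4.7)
The plan is to combine a multiplicative control of the mass along the geodesic $(x_j)_{j\geq 0}$, coming from Proposition \ref{ldc}, with the elementary identity for $m(B_r(x_0))$ in Lemma \ref{lem}. Set $k:=c/(c-1)>1$, with $c$ the constant of Proposition \ref{ldc}. Since $x_{j-1}\in s(x_j)$ and $m$ is a flow, we have $m(x_j)\geq m(x_{j-1})$, while \eqref{control sons} gives $m(x_j)\leq c\,m(x_{j-1})$ for every $j\geq 1$. Moreover, if $q(x_j)=1$ then $x_{j-1}$ is the unique son of $x_j$, hence $m(x_j)=m(x_{j-1})$, whereas if $q(x_j)\geq 2$ then \eqref{lower bound} yields $m(x_j)\geq k\,m(x_{j-1})$. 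Multiplying these factors telescopically over $j=1,\dots,r$ produces the key two-sided estimate
\[
 k^{\,n([x_1,x_r])}\;\leq\;\frac{m(x_r)}{m(x_0)}\;\leq\;c^{\,n([x_1,x_r])}.
\]

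Next I would use the monotonicity of $j\mapsto m(x_j)$ together with Lemma \ref{lem}(ii) to sandwich the ball mass as $m(x_r)\leq m(B_r(x_0))\leq (2r+1)\,m(x_r)$. Both implications then reduce to taking logarithms and absorbing a logarithmic correction.

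For the $(\Leftarrow)$ direction, assuming $n([x_1,x_r])\geq \alpha r$ for all $r>r_0$, the lower part of the estimate gives $m(B_r(x_0))\geq m(x_r)\geq m(x_0)\,k^{\alpha r}$, which is at least exponential growth with rate $\alpha\log k$. For the $(\Rightarrow)$ direction, assuming $m(B_r(x_0))\geq \beta e^{\alpha r}$ for $r>r_0$, the upper part gives
\[
 c^{\,n([x_1,x_r])}\,m(x_0)\;\geq\;m(x_r)\;\geq\;\frac{\beta\, e^{\alpha r}}{2r+1},
\]
so $n([x_1,x_r])\,\log c\;\geq\;\alpha r - \log(2r+1) - \log(m(x_0)/\beta)$, and for $r$ sufficiently large the right-hand side exceeds any fixed fraction of $\alpha r$, yielding $n([x_1,x_r])\geq \alpha' r$ with a new positive constant $\alpha'=\alpha'(x_0)$.

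The only nontrivial step is the derivation of the two-sided multiplicative estimate, which is a direct telescoping of the bounds in Proposition \ref{ldc}; I do not foresee any real obstacle, since both implications reduce to absorbing a lower-order logarithmic term into the exponential rate.
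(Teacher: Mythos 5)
Your proposal is correct and follows essentially the same route as the paper: both directions rest on the sandwich $m(x_r)\le m(B_r(x_0))\le (2r+1)m(x_r)$ from Lemma \ref{lem}(ii) together with the telescoped bounds $k^{n([x_1,x_r])}\le m(x_r)/m(x_0)\le c^{n([x_1,x_r])}$ coming from Proposition \ref{ldc}. The only cosmetic difference is that for the forward implication the paper takes $r$-th roots and argues by contradiction on $\liminf_r n([x_1,x_r])/r$, whereas you take logarithms and absorb the $\log(2r+1)$ term directly; these are equivalent.
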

\begin{proof}
For the sufficient condition, by \eqref{lower bound} we have that for any $x_0\in V$ and $r\geq r_0$
\begin{equation*}
    m(B_r(x_0))\geq m(x_r)\geq m(x_0)k^{n([x_1,x_r])}\geq m(x_0)k^{\alpha r}\geq \beta e^{\alpha r}.
\end{equation*}
Conversely, assume that $m$ has at least exponential growth. Then for some $\alpha, \beta ,r_0$ and all $r>r_0$ we have
\begin{equation*}
    \beta e^{\alpha r}\leq m(B_r(x_0))\leq (2r+1)c^{n([x_1,x_r])}m(x_0),
\end{equation*}
where $c$ is the constant in \eqref{control sons}. Then we get
\begin{equation*}
    c^{n([x_1,x_r])/r}\geq e^{\alpha}\Big(\frac {\beta }{(2r+1)m(x_0)}\Big)^{1/r}\longrightarrow e^{\alpha}>1 \quad \text{as} \ r\to \infty.
\end{equation*}
The assumption that there exists an $x_0 \in V$ such that $\displaystyle\liminf_{r\to\infty}n([x_1,x_r])/r= 0$ would then lead to a contraddiction.
\end{proof}

We end the section by observing that the \textit{isoperimetric inequality} does not hold for flow measures on $T$. We say that a measure $m$ satisfies the isoperimetric inequality on $T$ if there exists a constant $C_{iso}>0$ such that for every bounded $A \subset V$
\begin{equation*}
    m(\partial A)\geq C_{iso} m(A),
\end{equation*}
where the boundary of $A$ is defined as $\partial A=\{x \in A \ : \ \exists \ y \in A^c\  \text{such that} \ y \sim x\}$.
Now, given a ball $B=B_r(x_0)$, $r>0$ and $x_0 \in V$, set $B^+=B\cap\lbrace x\in V: \ x>x_0\rbrace$, $B^-=B\cap\lbrace x\in V: \ x\leq x_0\rbrace$. Note that $B^+\cup B^-\subset B$, with equality holding if and only if $r\leq 1$. \\ \indent A flow measure does not satisfy the isoperimetric inequality. Indeed, 
\begin{equation*}
    \frac{m(\partial B^-)}{m(B^-)}=\frac{2m(x_0)}{(r+1)m(x_0)}=\frac{2}{r+1}\to 0 \quad \text{as } r\to +\infty.
\end{equation*}

\section{Calder\'on--Zygmund theory}\label{s: admtr}

The classical Calderón-Zygmund theory heavily relies on the fact that metric balls enjoy the doubling property with respect to the given measure. As shown in the previous section, flow measures on the tree tested on balls are typically non-doubling. For this reason, inspired by the seminal work \cite{hs}, we substitute balls with a different family of sets which will show to be doubling in an appropriate sense, and can be used as base sets for building up a Calderón-Zygmund theory in this contest.

\subsection{Admissible trapezoids}

%

For $h''>h'\in \mathbb{N}\setminus\lbrace 0 \rbrace$, we define the \textit{trapezoid} rooted at $x_0\in V$ of parameters $h',h''$ as
\begin{equation*}
    R=R_{h'}^{h''}(x_0)=B_{h''-1}^-(x_0)\setminus B_{h'-1}^-(x_0)=\lbrace x\in V: \ x\leq x_0, \ \ell(x_0)-h''<\ell(x)\leq \ell(x_0)-h'\rbrace.
\end{equation*}
Observe that if $m$ is a flow measure, then
\begin{equation*}
    m(R)=m(x_0)(h''-h'),
\end{equation*}
reason why we call the quantity $h(R)=h''-h'$ the \textit{height} of the trapezoid.

Singletons $\lbrace x_0\rbrace$ are also considered to be trapezoids. Given a number $\beta\geq 12$ \footnote{This specific lower bound on $\beta$ is needed to guarantee enough room to perform the expansion algorithm described below.} we say that a trapezoid $R$ is admissible (with respect to $\beta$) if either $R=\lbrace x_0\rbrace$ or $R=R_{h'}^{h''}(x_0)$ with $2\leq h''/h'\leq \beta$, for some $x_0\in V$. We fix $\beta$ once for all and we denote by $\mathcal{R}$ the corresponding family of admissible trapezoids.

\bigskip 

{\bf{Assumptions.}} From now on we will assume that $T$ is a tree rooted at $\zeta_g\in\partial T$, $V$ is its set of vertices, and 
\begin{itemize}
\item[(1)] $m:V\rightarrow \mathbb R^+$ is a locally doubling flow measure; 
\item[(2)] $T$ is of uniformly bounded degree, i.e., $q(x)\leq c$, $\forall x\in V$. 
\end{itemize}
By Corollary \ref{corollario bounded degree} (2) is a consequence of (1), but we explicitly state it here to recall once for all the notation of the constant $c$.

\subsection{Decomposition and expansion algorithms}

We now describe procedures to define dyadic decompositions and expansions of admissible trapezoids. Let $R=R_{h'}^{h''}(x_0)\in \mathcal{R}$, and set $\gamma=h''/h'$. We have the following \textit{decomposition algorithm}:
\begin{itemize}
    \item if $R=\lbrace x_0 \rbrace$, return $R$;
    \item if $h'=1$ and $h''=2$, cut $R$ in the disjoint union of its vertices: they are at most $c$, all brothers, sons of $x$;
    \item if $h'=1$ and $h''=3$ or $\gamma\geq 4$, cut $R$ horizontally producing 
        \begin{equation*}
            R_u=R_{h'}^{2h'}(x_0), \quad R_d=R_{2h'}^{h''}(x_0);
        \end{equation*}
    \item else, cut $R$ vertically producing,
        \begin{equation*}
            R_y=R_{h'-1}^{h''-1}(y), \quad y\in s(x_0).
        \end{equation*}
\end{itemize}
It is easy to see that in any case the produced sub-trapezoids are admissible. Let $\mathcal{F}(R,1)$ be the output of the algorithm, which is a family of at most $c$ trapezoids forming a partition of $R$, and for $k\geq 1$ let $\mathcal{F}(R,k+1)$ be the family of trapezoids produced by applying the decomposition algorithm to each element of $\mathcal{F}(R,k)$. Observe that the algorithm can be iterated until one reaches the trivial partition of the given trapezoid $R$, which is the one constituted of singletons only.

Conversely, if we want to produce the dyadic father of the given admissible trapezoid $R$, we proceed via the following \textit{expansion algorithm}:
\begin{itemize}
    \item if $R=\lbrace x_0\rbrace$, we expand it to $R'=s(p(x_0))$
    \item if $h'=1$ and $h''=2$, we expand $R$ to $R'=R_1^3(p(x_0))$
    \item if $\gamma\geq 3$, we expand $R$ horizontally to $R'=R_{h'+1}^{h''+1}(p(x_0))$
    \item else, we can decide whether to expand $R$ down vertically to $R'=R_{h'}^{2h''}(x_0)$ or up vertically to $R'=R_{\floor{h'/2}}^{h''}(x_0)$.
\end{itemize}
Observe that no vertical expansion is performed as far as $h'=1$, so that also the up-vertical expansion is always properly defined. It is easy to check that any of the above expansion steps produces a new admissible trapezoid $R'$ which contains $R$. 
The following property can be considered as a substitute for the doubling property in the proposed contest.

\begin{prop}\label{costanti}
Let $R\in \mathcal{R}$, $R'$ its expansion and $Q \in \mathcal{F}(R,1)$. Then, 
\begin{equation*}
    \frac{1}{\widetilde{C}}m (R')\leq m(R)\leq \widetilde{C}m(Q), 
\end{equation*}
where $\widetilde{C}=\max\lbrace 2c,\beta-1, 3\rbrace$.
\end{prop}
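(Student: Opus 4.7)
My plan is to argue by direct case analysis, following the branches of the expansion algorithm for the inequality $m(R')\leq \widetilde{C}\, m(R)$ and the branches of the decomposition algorithm for $m(R)\leq \widetilde{C}\, m(Q)$. The whole argument is made elementary by the identity $m(R)=m(x_0)(h''-h')$, which holds for every non-singleton admissible trapezoid $R=R_{h'}^{h''}(x_0)$ because $m$ is a flow. Combined with the bound $m(p(x))\leq c\, m(x)$ supplied by Proposition \ref{ldc}, each case reduces to controlling a ratio of integers in $[2,\beta]$.

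For $m(R')\leq \widetilde{C}\,m(R)$ I would check the four expansion branches in turn. If $R=\{x_0\}$ and $R'=s(p(x_0))$, the flow property and \eqref{control sons} give $m(R')=m(p(x_0))\leq c\,m(R)$. If $R=R_1^2(x_0)$ and $R'=R_1^3(p(x_0))$, the same inequalities yield $m(R')=2 m(p(x_0))\leq 2c\,m(R)$. For the horizontal expansion with $\gamma\geq 3$ the height is preserved and only the root moves up, so $m(R')/m(R)=m(p(x_0))/m(x_0)\leq c$. For the down-vertical expansion $R'=R_{h'}^{2h''}(x_0)$ one computes $m(R')/m(R)=(2h''-h')/(h''-h')=1+h''/(h''-h')$, and the constraint $h''\geq 2h'$ gives $h''/(h''-h')\leq 2$, so the ratio is at most $3$. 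The up-vertical expansion $R'=R_{\floor{h'/2}}^{h''}(x_0)$ yields the smaller ratio $(h''-\floor{h'/2})/(h''-h')\leq h''/(h''-h')\leq 2$. Taking the worst case gives $m(R')\leq \max\{2c,3\}\, m(R)$.

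For $m(R)\leq \widetilde{C}\,m(Q)$ I would proceed analogously along the decomposition algorithm. The singleton case is trivial, and when $R=R_1^2(x_0)$ is sliced into its sons, \eqref{control sons} yields $m(R)/m(Q)=m(x_0)/m(y)\leq c$ for every $y\in s(x_0)$. For the horizontal cut with $\gamma\geq 4$, the pieces $R_u=R_{h'}^{2h'}(x_0)$ and $R_d=R_{2h'}^{h''}(x_0)$ have masses $h'\,m(x_0)$ and $(h''-2h')m(x_0)$, giving $m(R)/m(R_u)=\gamma-1\leq \beta-1$ and $m(R)/m(R_d)=(\gamma-1)/(\gamma-2)\leq 3/2$; the exceptional case $h'=1,h''=3$ splits $R$ into two equal pieces of mass $m(x_0)$, so the ratio is $2$. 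For the vertical cut $R_y=R_{h'-1}^{h''-1}(y)$ the height is unchanged, hence $m(R)/m(R_y)=m(x_0)/m(y)\leq c$. The worst ratio appearing across all decomposition branches is $\max\{c,\beta-1,2\}$, so collecting the two halves gives the claimed constant $\widetilde{C}=\max\{2c,\beta-1,3\}$. I do not anticipate any real obstacle: the only point requiring a brief check along the way is that every expanded or sub-trapezoid is itself admissible, and the assumption $\beta\geq 12$ is precisely what keeps the up-vertical expansion within the admissible range.
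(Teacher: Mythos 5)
Your proof is correct and follows essentially the same route as the paper's, which simply tabulates these same case-by-case ratios using $m(R)=m(x_0)(h''-h')$ and $m(p(x))\leq c\,m(x)$; you supply the computations the paper leaves implicit. You are in fact slightly more careful in the branch $h'=1,h''=3$, where the ratio $m(R)/m(R_d)$ equals $2$ rather than the paper's stated $3/2$ (still harmless since $2\leq 3\leq\widetilde{C}$).
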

\begin{proof}
Let $R=R_{h'}^{h''}(x_0)$ be an admissible trapezoid, $Q\in \mathcal{F}(R,1)$ and $R'$ the dyadic expansion of $R$. The following estimates hold:

\begin{equation*}
\begin{split}
m(R)&\leq
    \begin{cases}
m(Q), \quad &\text{if} \ R=\lbrace x_0\rbrace \\
(\beta-1)m(Q) &\text{if} \ Q=R_u\\
3m(Q)/2, &\text{if} \ Q=R_d\\
cm(Q),  &\text{otherwise},\\
\end{cases} \\ 
 m(R')&\leq
     \begin{cases}
cm(R), \quad &\text{if} \ R=\lbrace x_0\rbrace \ \text{or} \ \gamma\geq 3\\
2cm(R), &\text{if} \ h'=1, h''=2\\
3m(R),  &\text{if} \ R' \ \text{is {down} vertical expansion of} \ R\\
5m(R)/2  &\text{if} \ R' \ \text{is {up} vertical expansion of} \ R. 
\end{cases}
\end{split}
\end{equation*}
\end{proof}

\subsection{Hardy--Littlewood maximal function}
In this section we prove the $L^p$ boundedness of the Hardy--Littlewood maximal function associated to the family $\mathcal{R}$ when $p\in (1,+\infty]$ and  we also show that $M$ is of the weak-type (1,1). 
Given a function $f : V \to \mathbb{C}$,  its maximal function  is defined by
\begin{align*}
    Mf(x)= \sup_{R \ni x} \frac{1}{m(R)}\int_ R|f|\  dm,
\end{align*}
where the supremum is taken over all $R \in  \mathcal{R}$ such that $x \in R$. \\ 
Given $R=R_{h'}^{h''}(x) \in  \mathcal{R}$ we define its \textit{envelope} as
\begin{align*}
    \Tilde{R}=R_{\lceil \frac{h'}{\beta}\rceil}^{\lceil\beta h'' \rceil}(x).
\end{align*}
It is easy to show that 
\begin{align*}
    m(\Tilde{R}) \le 2\beta m(R), \quad \forall R \in  \mathcal{R}.
\end{align*}
In order to prove the boundedness of the Hardy--Littlewood maximal function we need  the following technical lemma.
\begin{lem}\label{max1}
Let $R_1, R_2 \in \mathcal{R}$  with roots $x_1,x_2$ respectively, such that $R_1 \cap R_2 \ne \emptyset$ and  $m(x_{1}) \ge m(x_{2})$. Then
\begin{align*}
    R_2 \subset \Tilde{R}_1.
\end{align*}
\end{lem}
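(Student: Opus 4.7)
The plan is as follows. First, I would exploit $R_1 \cap R_2 \ne \emptyset$ to pick a common vertex $v \in R_1 \cap R_2$: since $v \le x_1$ and $v \le x_2$, the vertices $x_1, x_2$ both lie on the geodesic from $v$ to $\zeta_g$ and are therefore comparable. The flow property gives $m(y) \le m(p(y))$, with equality if and only if $y$ is the unique son of $p(y)$; combined with $m(x_1) \ge m(x_2)$, this leaves two configurations: the main case $x_2 \le x_1$, and the degenerate case $x_1 < x_2$ with $m(x_1) = m(x_2)$, in which the chain from $p(x_1)$ up to $x_2$ must consist entirely of unique-son vertices.

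I would handle the main case first. Set $\delta := \ell(x_1) - \ell(x_2) \ge 0$. Writing $v \in R_i$ in terms of levels and eliminating $\ell(v)$ yields the pair of inequalities
\begin{align*}
h''_1 > \delta + h'_2 \qquad \text{and} \qquad h''_2 > \delta + h'_1.
\end{align*}
Now pick any $y \in R_2$: the tent condition $y \le x_1$ is immediate from $y \le x_2 \le x_1$, so I would only need to verify the level condition $\ell(x_1) - \lceil \beta h''_1 \rceil < \ell(y) \le \ell(x_1) - \lceil h'_1/\beta \rceil$. For the upper bound, using $\ell(y) \le \ell(x_2) - h'_2$, it suffices that $\delta + h'_2 \ge \lceil h'_1/\beta \rceil$; combining admissibility $h''_2 \le \beta h'_2$ with $h''_2 > h'_1$ gives $h'_2 > h'_1/\beta$, and the integer rounding then yields the bound. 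For the lower bound, using $\ell(y) > \ell(x_2) - h''_2$, it suffices that $\delta + h''_2 \le \lceil \beta h''_1 \rceil$; from $h'_2 \le h''_1 - \delta - 1$ and admissibility one computes
\begin{align*}
\delta + h''_2 \le \beta h''_1 - (\beta-1)\delta - \beta \le \beta h''_1 - \beta < \lceil \beta h''_1 \rceil,
\end{align*}
using $\beta \ge 12$.

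For the degenerate case $x_1 < x_2$, the main obstacle is that $R_2$ may formally contain vertices of the linear chain that lie strictly above $x_1$ and hence outside $V_{x_1} \supset \tilde R_1$. I would resolve this by exploiting the constancy of $m$ along the chain to re-express $R_2$ in coordinates based at $x_1$ and then reducing to the main case via a comparison of heights; I expect this reduction, which exploits the flexible shape of admissible trapezoids in this paper, to be the hardest part of the argument.
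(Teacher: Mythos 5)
Your handling of the main case $x_2\le x_1$ is essentially the paper's argument: the same two inequalities extracted from a common vertex, then the same two level estimates against $\lceil \beta h''_1\rceil$ and $\lceil h'_1/\beta\rceil$. However, one of your two inequalities is misstated. Eliminating $\ell(v)$ gives $h''_1>\delta+h'_2$ and $h''_2>h'_1-\delta$, \emph{not} $h''_2>\delta+h'_1$: for instance, if $x_2$ lies five levels below $x_1$, $R_1=R_3^{30}(x_1)$ and $R_2=R_1^2(x_2)=s(x_2)$, the two trapezoids meet, yet $h''_2=2<3=h'_1$. Consequently the step ``$h''_2>h'_1$, hence $h'_2>h'_1/\beta$'' is not available. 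The upper bound you want, $\delta+h'_2\ge \lceil h'_1/\beta\rceil$, is still true, but one must use $\delta\ge 0$ together with the corrected inequality, as the paper does: $\delta+h'_2\ge \delta+h''_2/\beta>\delta(1-\tfrac1\beta)+h'_1/\beta\ge h'_1/\beta$, and then round. Your lower-bound computation is correct and coincides with the paper's. You should also dispose of the singleton cases separately, since the envelope is defined only for non-singleton trapezoids.

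The degenerate case is where your proposal stops, and you are right that it is the crux --- but the reduction you sketch cannot be completed, because in that configuration the conclusion genuinely fails. Take $x_2=p(x_1)$ with $s(x_2)=\{x_1\}$, so that $m(x_2)=m(x_1)$; let $R_1=R_1^2(x_1)=s(x_1)$ and $R_2=R_1^3(x_2)=\{x_1\}\cup s(x_1)$, both admissible. Then $R_1\cap R_2=s(x_1)\ne\emptyset$ and $m(x_1)\ge m(x_2)$, but $x_1\in R_2$ while $\Tilde{R}_1\subset\{y\le x_1:\ \ell(y)\le \ell(x_1)-1\}$, so $R_2\not\subset \Tilde{R}_1$. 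In fairness, the paper's own proof hides the same issue: it asserts that the existence of $\overline{x}\in R_1\cap R_2$ forces $x_2$ to lie below $x_1$, which tacitly uses $m(x_1)\ge m(x_2)$ and ignores precisely the tie along a unique-son chain that you isolated. So the honest conclusion is not that a cleverer reduction is needed, but that the statement requires either the extra hypothesis $x_2\le x_1$ (which is what both proofs actually establish, and what must then be arranged in the covering argument of Theorem 3.3, e.g.\ by breaking ties in the selection order by level), or an envelope enlarged above the root. As written, your proposal has a genuine gap in this case; your instinct that it is the hardest part was correct, for a stronger reason than you anticipated.
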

\begin{proof} 
If $R_1$ and $R_2$ are singletons, then they coincide. If $R_2=\{x_2\}$, then $R_1 \cap R_2 \ne \emptyset$ implies $x_2 \in R_1 \subset \Tilde{R}_1.$ If $R_1=\{x_1\}$, then $R_1 \cap R_2 \ne \emptyset$ implies $x_1 \in R_2$, but, since $m(x_1) \ge m(x_2)$, it follows $x_1=x_2.$  \\ 
Consider now the case when $R_1=R_{h'_1}^{h''_1}(x_{1}), R_2=R_{h'_2}^{h''_2}(x_{2})$ are both not singletons. Define $\ell_i=\ell(x_i)$. Since
 $R_1 \cap R_2 \ne \emptyset$, there exists $\overline{x} \in R_1 \cap R_2$; hence $\ell(x_i) \ge\ell(\overline{x}). $  It is easy to see that the existence of $\overline{x}$ implies that $x_2$ lies below $x_1$ and in particular $\ell_2 \le\ell_1.$ Moreover,  $\ell_i-h_i''+1 \le \ell(\overline{x}) \le\ell_j-h'_j$, with $i,j=1,2.$ Thus
\begin{align}\label{max}
  \begin{cases}    \ell_1-\ell_2 \ge h'_1-h''_2+1, \\ 
   \ell_1-\ell_2 \le h''_1-h'_2 -1.
    \end{cases}
\end{align}
Let $x$ be a vertex in $R_2.$ By definition we have $h'_2 \le\ell_2-\ell(x) \le h''_2-1.$ By \eqref{max} 
\begin{align*}
    \ell_1-\ell(x)&=\ell_1-\ell_2+\ell_2-\ell(x) 
    <\beta(h''_1-\frac{h''_2}{\beta} +1)+h''_2-1 
    <\beta h_1'' \le \lceil \beta h_1'' \rceil. 
\end{align*}
Again, by \eqref{max},
\begin{align*}
     \ell_1-\ell(x) &= \ell_1-\ell_2+\ell_2-\ell(x) 
     > \frac{1}{\beta}(h'_1-\beta h'_2+1)+h'_2  
     >\frac{h'_1}{\beta},
\end{align*}
hence, we deduce $\ell_1-\ell(x) \ge \lceil \frac{h'_1}{\beta} \rceil.$
In conclusion, ${x} \in \Tilde{R}_1.$
\end{proof}

We remark that  \begin{align}\label{stella}
    \|Mf\|_\infty \le \|f\|_\infty, \qquad{\forall f \in L^\infty(m).}
\end{align} We can now state the main result of this section.
\begin{thm}
The following hold.
\begin{itemize}
    \item[(i)] For all $f \in L^1(m)$ and $\lambda>0$ $$m(\{x \in V \ : \ Mf(x)>\lambda\}) \le \frac{2\beta}{\lambda}\|f\|_1;$$
    \item[(ii)] for every $p \in (1,+\infty)$, $M$ is bounded on $L^p(m)$  with constant at most $2\bigg(2\beta\frac{p}{p-1}\bigg)^{1/p}.$
\end{itemize}
\end{thm}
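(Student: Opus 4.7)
The plan is to establish (i) by a Vitali-type covering argument tuned to the trapezoidal family $\mathcal{R}$, and to deduce (ii) from it by Marcinkiewicz interpolation against the trivial bound \eqref{stella}.

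For (i), I fix $f\in L^1(m)$ and $\lambda>0$ and set $E_\lambda=\{x\in V:Mf(x)>\lambda\}$. For each $x\in E_\lambda$ I select an admissible trapezoid $R_x\in\mathcal{R}$ with $x\in R_x$ and $\lambda\,m(R_x)<\int_{R_x}|f|\,dm$; in particular $m(R_x)\leq\|f\|_1/\lambda$, so the root masses of the collection $\mathcal{F}=\{R_x:x\in E_\lambda\}$ are uniformly bounded. From $\mathcal{F}$ I extract a pairwise disjoint subfamily $\{R^{(k)}\}$ greedily, by choosing at each stage a trapezoid whose root mass is (essentially) maximal among those still disjoint from the previously chosen ones. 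Lemma \ref{max1} then guarantees that any $R\in\mathcal{F}$ meeting some selected $R^{(k)}$, whose root mass is dominated by that of $R^{(k)}$, is contained in the envelope $\widetilde{R^{(k)}}$. Therefore $E_\lambda\subseteq\bigcup_k\widetilde{R^{(k)}}$, and using the envelope bound $m(\widetilde{R^{(k)}})\leq 2\beta\,m(R^{(k)})$ together with disjointness,
\[
m(E_\lambda)\leq\sum_k m(\widetilde{R^{(k)}})\leq 2\beta\sum_k m(R^{(k)})\leq\frac{2\beta}{\lambda}\sum_k\int_{R^{(k)}}|f|\,dm\leq\frac{2\beta}{\lambda}\|f\|_1.
\]

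For (ii), I apply the standard Marcinkiewicz truncation: since $\|M\|_{L^\infty\to L^\infty}\leq 1$ by \eqref{stella}, the decomposition $f=f\mathbf{1}_{\{|f|\leq\lambda/2\}}+f\mathbf{1}_{\{|f|>\lambda/2\}}$ gives $\{Mf>\lambda\}\subseteq\{M(f\mathbf{1}_{\{|f|>\lambda/2\}})>\lambda/2\}$, so (i) yields $m(\{Mf>\lambda\})\leq(4\beta/\lambda)\int_{\{|f|>\lambda/2\}}|f|\,dm$. Integrating via the layer-cake identity $\|Mf\|_p^p=p\int_0^\infty\lambda^{p-1}m(\{Mf>\lambda\})\,d\lambda$ and interchanging the order of integration gives precisely $\|Mf\|_p\leq 2\bigl(2\beta\,p/(p-1)\bigr)^{1/p}\|f\|_p$.

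The delicate technical point is the greedy extraction in (i): the supremum of root masses over a subcollection of $\mathcal{F}$ need not be attained, so strictly speaking one must allow approximate maximizers, with the resulting loss absorbed into the generous envelope $\widetilde{R}$; equivalently, one can first reduce to a finite subfamily (e.g.\ by truncating $f$ to a ball $B_N$ and observing that the relevant root masses above any fixed threshold are then finite in number) and take $N\to\infty$. Once a good disjoint subfamily is in hand, the volume and disjointness estimates close the argument immediately, and the rest is routine bookkeeping.
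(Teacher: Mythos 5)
Your argument is correct and follows essentially the same route as the paper: a greedy Vitali-type selection of disjoint trapezoids ordered by root mass, with Lemma \ref{max1} and the envelope bound $m(\tilde R)\le 2\beta\, m(R)$ giving the covering of the level set, and then Marcinkiewicz interpolation against \eqref{stella} for part (ii), yielding exactly the stated constant. The only point you flag as delicate --- that the supremum of root masses need not be attained --- is handled in the paper by fixing an enumeration of the countable family $S_0$ and proving by contradiction (via the summability of $\sum_i m(R_i)\le \|f\|_1/\lambda$ over disjoint selected trapezoids) that every trapezoid is eventually discarded; your proposed fixes (approximate maximizers or exhaustion by finite subfamilies) are equally viable.
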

\begin{proof} Property
$(ii)$  follows by $(i)$ and  \eqref{stella} by  the Marcinkiewicz interpolation Theorem. For proving $(i)$, by means of Lemma \ref{max1}, we can follow closely the proof of \cite[Th. 3.1]{hs}. \\ 
Let $\lambda>0$, $f \in L^1(m)$ and set 
\begin{align*}
    \Omega_\lambda&=\{x \in V \ : \ Mf(x)>\lambda \}, \quad S_0=\bigg\lbrace R \in  \mathcal{R} \ : \ \frac{1}{m(R)}\int_{ R}|f|\ dm>\lambda \bigg\rbrace.
\end{align*}
For all $R \in S_0$ we have 
$$
 m(x_R)\leq    m(R) < \frac{1}{\lambda} \|f\|_1 .
$$
$S_0$ is countable hence we can introduce an order. We say that $R \ge Q$ if $m(x_R) \ge m(x_Q).$ Let $R_1$ be the maximal trapezoid in $S_0$ with respect to $\ge$ (it exists because of the previous estimate) which appears first in the order. Define $S_1=\{R \in S_0 \ : \ R \cap R_1 = \emptyset\}.$ \\  Let $R_2$ be the  maximal admissible trapezoid in $S_1$ which appears first in the ordering. So we can define inductively the sequences \begin{align*}
    S_{i+1}=\{R \in S_i \ : \ R \cap R_j= \emptyset, \ j \le i \}, 
\end{align*} and $R_{i+1}\in S_{i+1}$ {is the maximal trapezoid with respect to $\ge$ which appears first in the ordering.}  We claim that 
\begin{align}\label{claim}
    \forall R \in S_0 \  \exists R_i \ : \ R \cap R_i \ne \emptyset,\  m(x_{R_i}) \ge m(x_R).
\end{align}
By Lemma \ref{max1}, \eqref{claim} in particular implies that $R \subset \Tilde{R}_i$. Now we prove the claim: it suffices to show that there exists $j \in \mathbb{N}$ such that $R \in S_j\setminus S_{j+1}$. By contradiction, if such a $j$ does not exist, then $\exists \ k$ such that $S_k$ contains infinite admissible trapezoids $\{T_l\}_l$ such that $T_l \cap T_i = \emptyset$ if $i \ne j$, $m(x_{T_l})=\max\{m(x_R) : R \in S_k\}$, $T_l \cap R = \emptyset.$ Now we set 
\begin{align*}
    R_k={T_{1}},..., R_{k+i}=T_{i+1}, \dots,
\end{align*}
then
\begin{align*}
    \sum_{i=k}^{+\infty}m(x_{R_i})\le \sum_{i=k}^{+\infty}m(R_i) \le \sum_{i=k}^{\infty}\frac{1}{\lambda}\int_{ R_i}|f|\ dm \le \frac{\|f\|_1}{\lambda},
\end{align*}
and the left hand side is infinite. Thus the claim is proved. 
Define $E=\cup_i \Tilde{R}_i$ and notice that $
    E^c \subset \Omega_\lambda^c$. Indeed if $x \in E^c$ and $R \in  \mathcal{R}$ contains $x$ then $R \notin S_0$. We conclude that  for $x \in E^c$
\begin{align*}
    M(f)(x)= \sup_{R \ni x, R \notin S_0 } \frac{1}{m(R)}\int_{R}|f|\  dm \le \lambda,
\end{align*}
hence $x\notin \Omega_\lambda$. In conclusion
\begin{equation*}
 m(\Omega_\lambda)\leq   m(E) \le \sum_i m(\Tilde{R}_i) \le 2\beta \sum_{i}m(R_i) \le 2\beta \frac{1}{\lambda}\|f\|_1. \qedhere
\end{equation*} 
\end{proof}

\subsection{Calder\'on--Zygmund decomposition}\label{subs: CZ}
 The aim of this section is to introduce a Calder\'on--Zygmund decomposition in our setting. We initially prove a preliminary lemma which states the existence of a partition of $V$ consisting of big admissible trapezoids, in the sense that, if we fix any $\sigma>0$, each set of such a partition has measure larger than $\sigma$.
     \begin{lem}\label{10}
     For all $\sigma>0$ there exists a partition $\mathcal{P} \subset \mathcal{R}$ of $V$, such that 
\begin{align*}
    m(R)> \sigma, \qquad \forall R \in \mathcal{P}.
\end{align*}
\end{lem}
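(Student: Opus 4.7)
My approach is to construct $\mathcal{P}$ by a dyadic slab decomposition, using admissible trapezoids of ratio $2$ together with an ancestor-absorption step to enforce the mass bound on low-mass pieces.

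First I would fix a sequence of reference levels $L_k = 3\cdot 2^k - 3$ for $k\ge 0$. For each $w\in V$ with $\ell(w)=L_k$, the trapezoid $R_{2^k}^{2^{k+1}}(w)$ is admissible (ratio $2$) and contains exactly the descendants of $w$ at levels in the slab $I_k = (L_k - 2^{k+1},\, L_k - 2^k] = (2^k-3,\, 2^{k+1}-3]$. As $w$ ranges over all vertices at level $L_k$, these trapezoids partition $\{x\in V : \ell(x)\in I_k\}$, and together with the analogous construction for the part of $V$ at levels below $-1$ this yields a base partition of $V$ into admissible trapezoids (not yet satisfying the mass bound).

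To enforce $m(R)>\sigma$, I would keep each piece $R_{2^k}^{2^{k+1}}(w)$ for which $m(w)\cdot 2^k>\sigma$, and absorb the remaining ``small'' pieces into larger admissible trapezoids rooted at ancestors. Precisely, given a small piece rooted at $w$, since $m(p^j(w))$ is non-decreasing in $j$ (by the flow property) and since the trapezoids $R_h^{2h}(z)$ have mass $m(z)\cdot h$ which is unbounded in $h$, one can always find an ancestor $p^j(w)$ and parameters $h'',h'$ with $h''/h'\in[2,\beta]$ so that $R_{h'}^{h''}(p^j(w))$ is admissible, contains the small piece (together with adjacent small slab pieces), and has mass strictly greater than $\sigma$.

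The main obstacle will be bookkeeping: the absorbing trapezoids coming from different low-mass branches must remain mutually disjoint, and together with the kept pieces they must still cover every vertex of $V$. I would handle this by performing the absorptions in a coordinated sweep along the geodesic $g$, processing vertices in order of decreasing level and choosing at each step the smallest admissible absorbing trapezoid compatible with the previously placed pieces. The rigid combinatorial shape of admissible trapezoids, combined with the expansion/decomposition algorithms of Section~\ref{s: admtr} and with Proposition~\ref{costanti}, ensures that this local greedy choice assembles into a globally consistent partition $\mathcal{P}\subset\mathcal{R}$ of $V$ with $m(R)>\sigma$ for every $R\in\mathcal{P}$.
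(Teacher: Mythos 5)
Your base partition into dyadic slabs is fine, but the absorption step hides a genuine gap that the ``coordinated sweep'' does not repair. The obstruction is precisely the rigidity of trapezoids: $R_{h'}^{h''}(z)$ consists of \emph{all} descendants of $z$ in a band of levels, with no way to exclude a sub-branch. Suppose $w_1,w_2$ are distinct vertices at the reference level $L_k$ with $m(w_1)2^k>\sigma\ge m(w_2)2^k$. Your rule keeps $P_1=R_{2^k}^{2^{k+1}}(w_1)$ and absorbs $P_2=R_{2^k}^{2^{k+1}}(w_2)$ into a trapezoid rooted at some ancestor $p^j(w_2)$. Any such trapezoid containing $P_2$ must cover the whole level band of the slab $I_k$, and as soon as $p^j(w_2)$ is a common ancestor of $w_1$ and $w_2$ it then contains all of $P_1$ as well; so disjointness from the kept pieces fails, and at that step of the sweep there is simply no ``compatible'' absorbing trapezoid. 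Rooting the absorber at $w_2$ itself does not help: when $m(w_2)$ is small the height needed to reach mass $\sigma$ forces the trapezoid deep into the lower slabs, colliding with the pieces already placed there, and it can never extend upward past $w_2$. So the local keep/absorb dichotomy cannot be implemented with admissible trapezoids, and this is not a bookkeeping issue but a structural one.

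The paper's proof sidesteps absorption entirely via a dichotomy along the distinguished geodesic $g$. If $m(x_n)\to\infty$ along $g$, one picks $\overline n$ with $m(x_{\overline n})>c\sigma$ and partitions $V$ into the sets $s(x_n)$, $n\ge\overline n$, together with the dyadic pieces $R_{2^{l-1}}^{2^{l}}(y)$ of the tents below the off-$g$ sons $y$ of the $x_n$ and below $x_{\overline n-1}$; by \eqref{control sons} every such piece automatically has mass greater than $\sigma$, so nothing small is ever produced. If instead $m(x_n)$ is bounded, \eqref{lower bound} forces $g$ to have no branching above some level $l$, so $V$ is a half-line glued to the tent $V_{x_l}$, and a one-dimensional covering by horizontal slabs of sufficiently large constant height (above) and geometrically growing height (below) does the job. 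To salvage your argument you would need to build in this global dichotomy rather than rely on a local greedy absorption.
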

\begin{proof}
For all $n\in \mathbb{Z}$ let $x_n$ denote the vertex in $g$ such that $\ell(x_n)=n$. We consider two cases, either $\{m(x_n)\}_{n \in \mathbb{Z}}$ diverges as $n \to + \infty$ or $\{m(x_n)\}_{n \in \mathbb{Z}}$ is bounded. If $m(x_n) \to +\infty$, then there exists $\overline{n} \in \mathbb{N}$ such that $m(x_n)>{\sigma}{c}$ for all $n \ge \overline{n}$ where $c$ is as in \eqref{control sons}. For any $y\in V$ set $R_l^y=R_{2^{l-1}}^{2^{l}}(y)$ for all $l \in \mathbb{N}$. We define $\mathcal{P}$ as
\begin{align*}
    \mathcal{P}= \lbrace R_l^y: \ l\in \mathbb{N}, \  y \in \lbrace x_{\overline{n}-1}\rbrace \cup (s(x_n)\setminus g), \ n\geq \overline{n} \rbrace\cup \lbrace s(x_j): j\geq \overline{n} \rbrace.
\end{align*}
This concludes the proof when $\{m(x_n)\}_{n \in \mathbb{Z}}$ diverges. 

Now assume that $\{m(x_n)\}_{n \in \mathbb{Z}}$ is bounded. By \eqref{lower bound}, there are finitely many vertices $y \in g$ such that $\ell(y) \ge 0$ and $q(y) \ge 2.$ Let  $x_l$ denote the vertex in $g$ with maximum level such that $q(x_l) \ge 2.$ By the  definition of flow we have that $m(x_n)=m(x_{l})$ if $n \ge l$. First, notice that there exists $p \in \mathbb{N}$ such that $2^{p-1} m(x_l)> \sigma$, thus we can cover the upper part of the tree with trapezoids $U_k=R_{2^{p-1}}^{2^p}(x_{l+2^{p-1}k})$ where $k \ge {1}$ and $m(U_k)=2^{p-1}m(x_l)>\sigma$ for all $k$. Subsequently, we cover the lower part of the tree with trapezoids $L_j=R_{2^j}^{2^{j+1}}(x_{l+2^{p-1}})$ with $j \ge p.$ Observe that $m(L_j)=2^jm(x_l) \ge 2^p m(x_l)> \sigma.$ We conclude by defining 
\begin{equation*}
    \mathcal{P}= \{U_k\}_{k \ge 1}  \bigcup \{L_j\}_{j \ge p}. \qedhere
\end{equation*}
\end{proof}

Next lemma provides a quite general procedure to determine a family of stopping sets for a given testing condition on the size of the $L^1$ mean of a function. Several  results in the paper will rely on such a scheme as a basic step.
\begin{lem}\label{stopping lemma}
Let $f:V\to\mathbb{C}$, $\alpha>0$ and $R \in \mathcal{R}$  such that $\frac{1}{m(R)}\int_R |f|  \ dm<\alpha$. Then, there exists a family $\mathcal{F}$ of disjoint admissible trapezoids such that for each $E\in \mathcal{F}$ the following hold:
\begin{itemize}
    \item[(i)] $\displaystyle\frac{1}{m(E)}\int_{E} |f| \  dm \ge \alpha$;
    \item [(ii)] $\displaystyle\frac{1}{m(E)}\int_{E} |f|\   dm  <\widetilde{C}\alpha$;
    \item[(iii)] if $ {x \in  R \setminus \bigcup_{E\in \mathcal{F}} E}$, then $|f(x)| < \alpha$.  
    \end{itemize}
\end{lem}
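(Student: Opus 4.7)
The plan is to construct $\mathcal{F}$ by a Calderón--Zygmund-style stopping-time selection driven by the decomposition algorithm of Section \ref{s: admtr}. Starting from $R$ (on which $|f|$ has mean strictly less than $\alpha$), I apply the decomposition algorithm to produce $\mathcal{F}(R,1)$, a partition of $R$ into at most $c$ admissible sub-trapezoids. For each $Q\in\mathcal{F}(R,1)$ I test the condition $\frac{1}{m(Q)}\int_Q |f|\,dm \ge \alpha$: if it holds, I include $Q$ in $\mathcal{F}$ and do not process $Q$ any further; otherwise, I iterate the procedure on $Q$, decomposing it via $\mathcal{F}(Q,1)$ and applying the same test to its children. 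Since the bounded degree assumption makes every admissible trapezoid a finite set, and since iterating the algorithm eventually reduces any trapezoid to its partition into singletons, the recursion terminates in finitely many generations along every branch. The family $\mathcal{F}$ consists of all trapezoids that were ever selected.

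Disjointness is built in: whenever a trapezoid is added to $\mathcal{F}$ its decomposition halts, and two trapezoids coming from distinct branches of the decomposition tree are disjoint because each $\mathcal{F}(\cdot,1)$ is a partition. Property (i) is immediate from the selection rule. For (ii), let $E\in\mathcal{F}$ and let $Q$ be the trapezoid of which $E$ is a child in the decomposition tree (with the convention that $R$ itself is the parent if $E\in\mathcal{F}(R,1)$). By construction the test failed on $Q$, i.e. $\int_Q|f|\,dm<\alpha\,m(Q)$, and Proposition \ref{costanti} gives $m(Q)\le\widetilde{C}\,m(E)$; combining these,
\[
\int_E |f|\,dm \;\le\; \int_Q |f|\,dm \;<\; \alpha\,m(Q) \;\le\; \widetilde{C}\alpha\,m(E).
\]
For (iii), take $x\in R\setminus\bigcup_{E\in\mathcal{F}}E$. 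Then at every generation the (unique) trapezoid of the current partition containing $x$ failed the selection test and was further decomposed. Following this chain until it reaches the singleton $\{x\}$ (which is itself admissible), we see that $\{x\}$ was not selected, so
\[
|f(x)| \;=\; \frac{1}{m(\{x\})}\int_{\{x\}}|f|\,dm \;<\; \alpha,
\]
as required.

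The only nontrivial point is property (ii), and its proof depends essentially on the quasi-doubling inequality $m(Q)\le\widetilde{C}\,m(E)$ relating a trapezoid to any of its children in the decomposition: this is precisely the role that the flexible decomposition algorithm and Proposition \ref{costanti} were designed to play in place of the classical doubling condition.
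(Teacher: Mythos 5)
Your proposal is correct and follows essentially the same stopping-time argument as the paper: iterate the decomposition algorithm, select a trapezoid as soon as its mean of $|f|$ reaches $\alpha$, derive (ii) from the parent's failed test combined with the comparison $m(E')\le\widetilde{C}\,m(E)$ of Proposition \ref{costanti}, and obtain (iii) by noting that the unselected chain through $x$ terminates at the admissible singleton $\{x\}$. No gaps.
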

\begin{proof}
 We apply the decomposition algorithm to $R$: if $Q\in \mathcal{F}(R,1)$ is such that $\frac{1}{m(Q)}\int_{Q} |f| \ dm \ge \alpha$ then we stop and declare $Q\in \mathcal{F}$, otherwise, if $Q$ is divisible (i.e., it is not a singleton) we split it up applying again the decomposition algorithm.
 We iterate the above reasoning until $R$ is partitioned in some \textit{stopping sets} $E$ such that $\frac{1}{m(E)}\int_{E} |f| \ dm \ge \alpha$ (some of which may be singletons) and some singletons $x$ at which $|f(x)|=\frac{1}{m(x)}\int_{\lbrace x\rbrace} |f|  \ dm<\alpha$ . Let $\mathcal{F}$ be the family of the stopping sets. Then $(i)$ and $(iii)$ hold by construction. To prove $(ii)$:  for each $E\in \mathcal{F}$ there exists $k\geq 1$ such that $E\in \mathcal{F}(R,k)$. Let $E'$ be the unique set in $ \mathcal{F}(R,k-1)$ such that $E \in \mathcal{F}(E',1)$.
 Then $E\subset E'$, $m(E') \le \widetilde{C} m(E)$ and, since $E'$ is not a stopping set, $\frac{1}{m(E')}\int_{E'} |f| \  dm < \alpha$. Hence \begin{equation*}
        \frac{1}{m(E)}\int_{E} |f|\   dm \le \frac{\widetilde{C}}{m(E')} \int_{ E'} |f| \ dm <\widetilde{C}\alpha.\qedhere
    \end{equation*}
\end{proof}
Now we present the main result of this section, namely the Calder\'on--Zygmund decomposition of integrable functions. 

\begin{thm}[Calder\'on--Zygmund decomposition] For every $f\in L^1(m)$ and $\alpha>0$, there exist a family $\lbrace E_i\rbrace$ of disjoint admissible trapezoids and functions $g$, $b^i$ such that $f=g+\sum_i b^i$ and

\begin{itemize}
    \item[(i)] $|g| \le \widetilde{C} \alpha$;
    \item[(ii)]  $b^i=0$ on $(E_i)^c$;
    \item[(iii)]  $\|b^i\|_1 \le 2\widetilde{C}\alpha m(E_i)$ and $\int_{E_i} \ b^i \ dm = 0;$
    \item[(iv)] $\displaystyle\sum_i m(E_i) \le \frac{\|f\|_1}{\alpha}$.
\end{itemize}
\end{thm}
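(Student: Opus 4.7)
The plan is a standard stopping-time decomposition, with the one wrinkle that since the tree is infinite and the measure has no a priori upper bound, we first need to partition $V$ into pieces that are large enough so that the average of $|f|$ on each piece already lies below $\alpha$. This is exactly what Lemma \ref{10} provides.

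More precisely, I would first apply Lemma \ref{10} with $\sigma=\|f\|_1/\alpha$ to obtain a partition $\mathcal{P}\subset\mathcal{R}$ of $V$ with $m(R)>\|f\|_1/\alpha$ for every $R\in\mathcal{P}$. In particular
\[
\frac{1}{m(R)}\int_R|f|\,dm\leq\frac{\|f\|_1}{m(R)}<\alpha\qquad\forall\, R\in\mathcal{P},
\]
so on each such $R$ the hypothesis of Lemma \ref{stopping lemma} is satisfied with constant $\alpha$. I then apply Lemma \ref{stopping lemma} to each $R\in\mathcal{P}$ separately and collect the resulting stopping families into a single countable family $\{E_i\}$ of pairwise disjoint admissible trapezoids enjoying
\[
\alpha\leq\frac{1}{m(E_i)}\int_{E_i}|f|\,dm<\widetilde{C}\alpha,\qquad |f(x)|<\alpha\quad\text{for }x\in V\setminus\textstyle\bigcup_i E_i.
\]

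Given this family, I would define, for each $i$,
\[
b^i=\Bigl(f-\frac{1}{m(E_i)}\int_{E_i}f\,dm\Bigr)\mathbf{1}_{E_i},\qquad g=f-\sum_i b^i.
\]
Property (ii) is then immediate from the definition, and property (iii) follows by the triangle inequality and the upper bound $\frac{1}{m(E_i)}\int_{E_i}|f|\,dm<\widetilde{C}\alpha$ from Lemma \ref{stopping lemma}(ii), together with the fact that the defining formula for $b^i$ has zero mean on $E_i$ by construction. For (iv) I would use the lower bound in Lemma \ref{stopping lemma}(i) to write $\alpha m(E_i)\leq\int_{E_i}|f|\,dm$ and sum over $i$, exploiting disjointness of the $E_i$. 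Finally, for (i), on each $E_i$ the function $g$ equals the average of $f$ over $E_i$, which is bounded in modulus by $\widetilde{C}\alpha$ by Lemma \ref{stopping lemma}(ii); on the complement $V\setminus\bigcup_i E_i$ one has $g=f$ and $|f|<\alpha\leq\widetilde{C}\alpha$ by Lemma \ref{stopping lemma}(iii).

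No step looks substantially harder than routine bookkeeping: the only conceptual point is that the stopping algorithm of Lemma \ref{stopping lemma} must be initiated inside trapezoids on which the average of $|f|$ is already $<\alpha$, and this is precisely the reason Lemma \ref{10} was proved. The bounded-overlap and doubling-surrogate estimate $m(E')\leq\widetilde{C}\,m(E)$ (built into Lemma \ref{stopping lemma} via Proposition \ref{costanti}) is what yields the sharp upper bound $\widetilde{C}\alpha$ on the average and hence the $L^\infty$ control of the good part $g$.
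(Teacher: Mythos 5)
Your proposal is correct and follows the paper's own argument essentially verbatim: partition $V$ via Lemma \ref{10} with $\sigma=\|f\|_1/\alpha$, run the stopping-time Lemma \ref{stopping lemma} on each piece, and define $g$ and $b^i$ exactly as in the paper, with the four properties checked in the same way. No gaps.
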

\begin{proof}
Let $\mathcal{P} \subset \mathcal{R}$ be a partition of ${V}$  such that for all  $R \in \mathcal{P}$ we have  $m(R)>\frac{\|f\|_1}{\alpha}$. Then, for every $R \in \mathcal{P}$, it holds $\frac{1}{m(R)}\int_R |f|  \ dm<\alpha$, and  we can apply Lemma \ref{stopping lemma}. Let $\mathcal{F}(R)$ be the family of stopping sets generated by $R$ and let  $\lbrace E_i \rbrace$ be a listing of the sets belonging to $\mathcal{F}(R)$ for some $R \in \mathcal{R}$.
 We define now 
\begin{align*}
    &g(x) = \begin{cases}  \displaystyle\frac{1}{m(E_i)} \int_{E_i} f \ dm   &\text{if $x \in\bigcup_i E_i$,} \\ 
    f(x) &\text{else,}\end{cases} \\ 
    &b^i(x) = \bigg(f(x)- \frac{1}{m(E_i)}\int_{ E_i} f \ dm \bigg) \chi_{E_i}(x).  
\end{align*}
By  Lemma \ref{stopping lemma}, $|g(x)| \le \widetilde{C} \alpha.$ Every $b^i$ is supported  in $E_i$ and $\int_{E_i}{b^i} \ dm=0$. Moreover,
\begin{align*}
    \|b^i\|_1 \le 2 \int_{E_i} |f|\ dm \le 2\widetilde{C}\alpha m(E_i),
\end{align*}
and
\begin{equation*}
    \sum_i m(E_i) \le \frac{1}{\alpha} \sum_i \int_{ E_i} |f|\  dm \le \frac{\|f\|_1}{\alpha}.\qedhere
\end{equation*}
\end{proof}

  \section{${BMO}$ and Hardy spaces}\label{s: BMOHardy}
  This section is devoted to the definitions and the study of properties of $BMO$ and Hardy spaces in our setting.
  \subsection{$BMO$ spaces}
    We now introduce the space of bounded mean oscillation functions.
  In the following we denote by $f_R$ the integral average of $f$ on $R$, i.e., 
  \begin{align*}
      f_R= \frac{1}{m(R)}\int_{R}f \ dm.
  \end{align*} 

\begin{defn}
Given $q \in [1,+\infty)$ we define $BMO_q(m)$ as the space of all functions $f:V\to \mathbb{C}$ such that 
   \begin{align*}
        \|f\|_{BMO_q}=\sup_{R \in  \mathcal{R}} \bigg(\frac{1}{m(R)} \int_{R}|f-f_R|^q \ dm\bigg)^{1/q}<\infty,
   \end{align*}
   quotiented over constant functions. It can be easily shown that $(BMO_q(m), \|\cdot\|_{BMO_q})$ is a Banach space. 
  \end{defn}

As an immediate consequence of the H\"older's inequality we have 
  \begin{align*}
      \|f\|_{BMO_1} \le \|f\|_{BMO_q},
  \end{align*}
  thus $BMO_{q}(m) \subset BMO_1(m)$. To prove the reverse inclusion we shall first show that a  John-Nirenberg inequality holds in our setting. 
  
  \begin{thm}[John-Nirenberg inequality]\label{JN} There exist $\eta,A>0$ such that, for all $f \in BMO_1(m)$:
   \begin{itemize}
    \item[(i)] $\displaystyle\sup_{R \in \mathcal{R}_2^\beta} \frac{1}{m(R)} \int_R \exp{\bigg(\frac{\eta}{\|f\|_{BMO_1}}|f-f_R|\bigg)} \ dm \le A$;
         \item[(ii)]  $\displaystyle m(\{x \in R \ : \ |f(x)-f_R|>t\|f\|_{BMO_1}\}) \le A e^{-\eta t}m(R)$,  \qquad $\forall t>0$ and $R \in \mathcal{R}$.
         \end{itemize} 
  \end{thm}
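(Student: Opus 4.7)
The two parts of the theorem are essentially equivalent: (i) follows from (ii) by writing the exponential as a layer-cake integral
\[
\frac{1}{m(R)}\int_R e^{\eta'|f-f_R|/\|f\|_{BMO_1}}\,dm = 1+\int_0^\infty \eta' e^{\eta' s}\,\frac{m(\{|f-f_R|>s\|f\|_{BMO_1}\})}{m(R)}\,ds,
\]
which is finite for any $\eta'<\eta$ by (ii), and (ii) follows from (i) by Chebyshev. I would therefore concentrate on (ii), normalizing $\|f\|_{BMO_1}=1$ and fixing a non-singleton $R\in\mathcal{R}$ (the singleton case being trivial).

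The heart of the proof is a Calder\'on--Zygmund--type iteration of the stopping machinery of Lemma \ref{stopping lemma}. Fix a threshold $\alpha>1$ (eventually $\alpha$ is an absolute constant, e.g.\ $\alpha=e$). Since $\frac{1}{m(R)}\int_R|f-f_R|\,dm\le \|f\|_{BMO_1}=1<\alpha$, apply Lemma \ref{stopping lemma} to $f-f_R$ on $R$ to obtain a disjoint family of admissible trapezoids $\{E_j^{(1)}\}\subset R$ satisfying items (i)--(iii) of that lemma. On each $E_j^{(1)}$ one again has $\frac{1}{m(E_j^{(1)})}\int_{E_j^{(1)}}|f-f_{E_j^{(1)}}|\,dm\le 1<\alpha$, so we can re-apply Lemma \ref{stopping lemma} inside $E_j^{(1)}$ to $f-f_{E_j^{(1)}}$ and obtain the next generation $\{E_k^{(2)}\}$. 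Iterating, set $S_n=\bigcup_j E_j^{(n)}$. Two crucial estimates fall out of the construction:
\begin{align*}
m(S_n)\,&\le\,\alpha^{-n}m(R),\\
|f(x)-f_R|\,&\le\,n\,\widetilde{C}\alpha\qquad\text{for every }x\in R\setminus S_n.
\end{align*}
The measure estimate follows by iterating $\sum_j m(E_j^{(1)})\le \tfrac{1}{\alpha}\int_R|f-f_R|\,dm\le \tfrac{m(R)}{\alpha}$ inside each parent trapezoid. The pointwise estimate uses the telescoping decomposition along the chain of ancestors $E_{j_1}^{(1)}\supset E_{j_2}^{(2)}\supset\cdots\supset E_{j_{n-1}}^{(n-1)}\ni x$ that contains $x$: the jump $|f(x)-f_{E_{j_{n-1}}^{(n-1)}}|$ is controlled by $\alpha$ via item (iii) of Lemma \ref{stopping lemma}, and each consecutive difference $|f_{E_{j_{i+1}}^{(i+1)}}-f_{E_{j_i}^{(i)}}|$ is controlled by $\widetilde{C}\alpha$ via item (ii) applied within the parent.

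With these two estimates in hand, for $t>0$ we choose $n=\lfloor t/(\widetilde{C}\alpha)\rfloor$, so that $\{x\in R:|f(x)-f_R|>t\}\subset S_n$, and obtain
\[
m(\{x\in R:|f(x)-f_R|>t\})\le \alpha^{-n}m(R)\le \alpha\,e^{-(\log\alpha/\widetilde{C}\alpha)\,t}\,m(R),
\]
proving (ii) with $\eta=\log\alpha/(\widetilde{C}\alpha)$ and $A$ an absolute constant (depending only on $\widetilde{C}$). The main technical obstacle is ensuring that each iteration preserves admissibility and that the two bounds accumulate correctly across generations; this is precisely what Proposition \ref{costanti} and properties (ii)--(iii) of Lemma \ref{stopping lemma} were built for, and no additional geometric input is needed. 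Everything else is a routine normalization and Chebyshev/layer-cake manipulation to pass between (i) and (ii).
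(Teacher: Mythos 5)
Your argument is correct, but it takes a genuinely different route from the paper's. The paper proves (i) first, by a bootstrap: it defines $F(t)=\sup_{R}\frac{1}{m(R)}\int_R\exp\bigl(t|f-f_R|/\|f\|_{BMO_1}\bigr)\,dm$, applies Lemma \ref{stopping lemma} \emph{once} per trapezoid to split $R_0$ into a good part and stopping sets of total mass at most $m(R_0)/2$, and derives the self-improving inequality $F(t)\le e^{2t}+e^{(\widetilde{C}+2)t}F(t)/2$; this forces $F(\eta)\le A$ for small $\eta$, but only after first truncating to bounded $f$ (so that $F(t)<\infty$ a priori) and then passing to the limit by dominated convergence. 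It then gets (ii) from (i) by Chebyshev. You instead prove (ii) directly by the classical iterated stopping-time argument, nesting generations $S_n$ with $m(S_n)\le\alpha^{-n}m(R)$ and a telescoping pointwise bound off $S_n$, and recover (i) by layer cake. Both proofs rest on exactly the same two ingredients from Lemma \ref{stopping lemma} (the mass bound from item (i) and the jump bound $|f_E-f_{E'}|\lesssim\widetilde{C}\alpha$ from item (ii)); your version buys an explicit decay rate and avoids the truncation/limiting step entirely, since the iteration never needs $\int_R e^{c|f-f_R|}\,dm$ to be finite in advance, at the cost of tracking the nested generations. Two small points to tidy up: the telescoping chain gives $|f(x)-f_R|\le\alpha+n\widetilde{C}\alpha$ rather than $n\widetilde{C}\alpha$ (harmless for the constants), and in the discrete setting you should note that $\bigcap_n S_n=\emptyset$ because $m(\{x\})>0$ for every vertex, so every point is eventually captured by the pointwise bound.
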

  \begin{proof} Suppose  $f:V\to\mathbb{C}$ is non constant, otherwise the result is trivial. Let $R_0 \in \mathcal{R}$.   If $R_0=\{x_0\}$, then  $f_{R_0}=f(x_0)$ and 
  \begin{align*}
       \frac{1}{m(R_0)}\int_{R_0}\exp{\bigg(\frac{\eta}{\|f\|_{BMO_1}}|f-f_{R_0}|\bigg)}\ dm=1,
  \end{align*}
  thus it is sufficient to choose $A \ge 1$. \\ 
 If $R_0 \ne \{x_0\}$, we have
  \begin{align*}
      \frac{1}{m(R_0)}\int_{R_0}|f-f_{R_0}|\ dm < 2 \|f\|_{BMO_1}.  
  \end{align*}
  
 Applying Lemma \ref{stopping lemma} to the function $f-f_{R_0}$ with $\alpha=2 \|f\|_{BMO_1}$, we get a family $\mathcal{F}$ of disjoint stopping sets contained in $R_0$ satisfying properties $(i)$, $(ii)$ and $(iii)$ in the lemma.
 In particular by $(i)$ follows
\begin{equation}\label{extra property}
\begin{split}
          m\big(\bigcup_{E\in \mathcal{F}} E\big) &=\sum_{E\in \mathcal{F}}m(E)< \frac{1}{2\|f\|_{BMO_1}} \sum_{E\in \mathcal{F}} \int_{E}|f-f_{R_0}|\ dm \\ 
      &\le \frac{1}{2\|f\|_{BMO_1}}\int_{R_0} |f-f_{R_0}| \ dm  \le \frac{m(R_0)}{2}.
      \end{split}
\end{equation}
For each stopping set $E\in \mathcal{F}$ we have
  \begin{equation}\label{sotto a 4}
  \begin{split}
      |f_{E}-f_{R_0}| &\le |f_{E}-f_{E'}|+|f_{E'}-f_{R_0}|\le \frac{1}{m(E)} \int_{E}|f-f_{E'}|\ dm+\frac{1}{m(E')}\int_{E'} |f-f_{R_0}|\  dm \\ 
      &\le \frac{\widetilde{C}}{m(E')}\int_{E'}|f-f_{E'}| \ dm+2\|f\|_{BMO_1} \le(\widetilde{C}+2)\|f\|_{BMO_1}.
      \end{split}
\end{equation}
Now, suppose first $f \in L^\infty(m)$, and for $t>0$ we define 
  \begin{align*}
      F(t)= \sup_{R \in \mathcal{R}_2^\beta} \frac{1}{m(R)} \int_{R}\exp{\bigg(\frac{t}{\|f\|_{BMO_1}}|f-f_R|\bigg)}\ dm.
  \end{align*} 
 Then $|f-f_R| \le 2 \|f\|_\infty$, from which follows
  \begin{align*}
      F(t) \le \exp \bigg( \frac{2t\|f\|_\infty}{\|f\|_{BMO_1}} \bigg)<+ \infty, \quad \forall t>0.
  \end{align*}  
We estimate
 \begin{equation}\label{LastExp}
 \begin{split}
    &\frac{1}{m(R_0)} \int_{R_0} \exp{ \bigg( \frac{t}{\|f\|_{BMO_1}}|f- f_{R_0}| \bigg)}\  dm  \le \frac{1}{m(R_0)} \int_{R_0 \setminus \cup_{E\in \mathcal{F}} E} \exp{(2t)}\ dm  \\ &+\frac{1}{m(R_0)} \sum_{E\in \mathcal{F}} \int_{E} \exp{ \bigg(\frac{t}{\|f\|_{BMO_1}}|f-f_{E}|}\bigg)\exp{\bigg( \frac{t}{\|f\|_{BMO_1}}|f_{E}-f_{R_0}| \bigg)\  dm}.
\end{split}
\end{equation}
 Using \eqref{sotto a 4} we dominate the last expression in \eqref{LastExp} with 
 \begin{align*}
    \exp(2t)+&\frac{1}{m(R_0)}\sum_{E\in \mathcal{F}}\int_E \exp\bigg((\widetilde{C}+2)t\bigg)\exp{\bigg( \frac{t}{\|f\|_{BMO_1}}|f-f_E|\bigg)}\ dm \\
     &\le \exp(2t) +\exp\bigg((\widetilde{C}+2)t\bigg)\frac{1}{m(R_0)} \sum_{E\in \mathcal{F}}m(E)F(t) \\ &\le \exp(2t) +\exp\bigg((\widetilde{C}+2)t\bigg) \frac{F(t)}{2},
 \end{align*}
where the last inequality is verified by \eqref{extra property}. We conclude that $\displaystyle F(t) \le \frac{2 e^{2t}}{2-e^{(\widetilde{C}+2)t}},$
hence there exist $\eta, A>0$ such that $F(\eta)\leq A$. This ends the proof  when $f$ is a bounded function.  \\ 
For the general case, let $f \in BMO_1(m)$ and for all $k \in \mathbb{N}$ and $x \in V$ define
\begin{align*}
    f_k(x)=\begin{cases} f(x) &\qquad{|f(x)| \le k}, \\ 
   \displaystyle \frac{f(x)}{|f(x)|}k &\qquad{|f(x)|>k.}
    \end{cases}
\end{align*}
It is readily seen that $f_k \in L^\infty(m)$, $f_k \to f$ pointwise on $V$, $(f_k)_R \to f_R$ and there exists a positive constant $c'$ such that $\|f_k\|_{BMO_1} \le c'\|f\|_{BMO_1}$. We have that 
\begin{equation*}
\frac{1}{m(R)} \int_{R} \exp{ \bigg( \frac{\eta}{c'\|f\|_{BMO_1}}|f_k-(f_k)_R| \bigg)}\ dm \le \frac{1}{m(R)}\int_{R} \exp{ \bigg( \frac{\eta}{\|f_k\|_{BMO_1}}|f_k-(f_k)_R| \bigg)}\ dm \le C. 
\end{equation*}
Passing to the limit, we deduce {\it (i)} by the dominated convergence theorem. In order to prove {\it (ii)}, notice that
\begin{align*}
    & m(\{x \in R \ : \ |f(x)-f_R|>t\|f\|_{BMO_1}\})=m(\{x \in R : \exp{\bigg(\frac{\eta}{\|f\|_{BMO_1}}|f(x)-f_R|\bigg)}>e^{\eta t}\}) \\ &\le e^{-\eta t}\int_{ R}\exp{\bigg(\frac{\eta}{\|f\|_{BMO_1}}|f-f_R|\bigg)}\ dm\le A e^{-\eta t}m(R), 
\end{align*} 
where the last inequality follows by $(i)$.
  \end{proof} 
  
  A remarkable consequence of Theorem \ref{JN} is the equivalence of the $BMO_q(m)$ spaces, $q \in [1,+\infty)$. 
  \begin{cor}
  For all $1<q<+\infty$ there exists a constant $B_q$ depending only on $q$ such that 
  \begin{align*}
      \|f\|_{BMO_q} \le B_q \|f\|_{BMO_1}, \quad \forall f\in BMO_1(m).
      \end{align*}
  \end{cor}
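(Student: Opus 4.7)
The plan is to bound the $L^q$ mean oscillation by integrating the tail estimate from Theorem \ref{JN}(ii) via the layer-cake (Cavalieri) formula. Fix $f \in BMO_1(m)$ and $R \in \mathcal{R}$; may assume $\|f\|_{BMO_1}>0$, otherwise the statement is trivial. The starting identity will be
\begin{equation*}
\int_R |f-f_R|^q \, dm = q \int_0^\infty s^{q-1} \, m\bigl(\{x \in R : |f(x)-f_R| > s\}\bigr)\, ds.
\end{equation*}

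Next I would perform the change of variables $s = t\|f\|_{BMO_1}$ in the outer integral, so that the tail set becomes exactly the one controlled by part (ii) of the John--Nirenberg theorem. This gives
\begin{equation*}
\int_R |f-f_R|^q \, dm = q \|f\|_{BMO_1}^q \int_0^\infty t^{q-1} \, m\bigl(\{x \in R : |f(x)-f_R| > t\|f\|_{BMO_1}\}\bigr)\, dt,
\end{equation*}
and applying Theorem \ref{JN}(ii) yields the pointwise-in-$t$ bound $A e^{-\eta t} m(R)$ on the integrand's measure factor. Pulling $A\, m(R)$ out of the integral leaves $\int_0^\infty t^{q-1} e^{-\eta t}\, dt = \Gamma(q)/\eta^q$, so that
\begin{equation*}
\frac{1}{m(R)} \int_R |f-f_R|^q \, dm \le \frac{A\, q\, \Gamma(q)}{\eta^q} \|f\|_{BMO_1}^q = \frac{A\,\Gamma(q+1)}{\eta^q} \|f\|_{BMO_1}^q.
\end{equation*}

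Taking the $q$-th root and the supremum over $R \in \mathcal{R}$ gives $\|f\|_{BMO_q} \le B_q \|f\|_{BMO_1}$ with $B_q = \bigl(A\,\Gamma(q+1)/\eta^q\bigr)^{1/q}$, which depends only on $q$ since $A$ and $\eta$ are the universal constants from Theorem \ref{JN}. There is no real obstacle here: the content is entirely carried by the exponential tail estimate, and this corollary is the standard self-improvement deduction from John--Nirenberg. The only mild care needed is checking that the layer-cake identity is valid in the discrete setting, but since $f$ is a function on a countable set and we are integrating over a finite-mass set $R$ (the mean $f_R$ is finite whenever $f\in BMO_1(m)$ and $R$ has finite measure, which holds here since all admissible trapezoids have finite mass), this is straightforward.
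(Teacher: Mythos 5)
Your proposal is correct and follows exactly the paper's own argument: the layer-cake identity on $\int_R|f-f_R|^q\,dm$, the tail bound from Theorem \ref{JN}(ii), and the Gamma-function integral, yielding the same constant $B_q=(qA\Gamma(q))^{1/q}/\eta$. No issues.
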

  \begin{proof}
 \begin{align*}
     \frac{1}{m(R)} \int_{R} |f-f_R|^q \ dm &= \frac{q}{m(R)}\int_{0}^{\infty} \alpha^{q-1}m(\{x \in R : |f-f_R|(x)>\alpha\}) \ d\alpha  \\ 
     &\le q\int_0^\infty \alpha^{q-1} Ae^{-\eta \alpha/\|f\|_{BMO_1}} \ d\alpha \le qA \bigg(\frac{\|f\|_{BMO_1}}{\eta}\bigg)^q\Gamma(q).
 \end{align*}
 We conclude by choosing $B_q={(qA\Gamma(q))^{1/q}}/{\eta}.$
  \end{proof}
 Henceforward,  let $BMO(m)$ denote the space $BMO_1(m)$.
  \subsection{Hardy spaces}
  In this section we introduce atomic Hardy spaces. In our setting, atoms are supported in admissible trapezoids. 
  \begin{defn}
  A function $a$ is a  $(1,p)$-atom for $p \in (1,+\infty]$ if the following hold 
  \begin{itemize}
      \item[$(i)$] $a$ is supported in a set $R\in \mathcal{R}$;  
      \item[$(ii)$] $\|a\|_p \le m(R)^{1/p-1}$;  
      \item[$(iii)$] $ \int_{R} a \ dm=0$.
  \end{itemize}
  \end{defn}
  \begin{defn}
  The Hardy space $H^{1,p}(m)$ is the space of all the function $g \in L^1(m)$ such that $g=\sum_{j} \lambda_ja_j$ where $a_j$ are $(1,p)$ atoms and $\lambda_j$ are complex numbers such that $\sum_j |\lambda_j|<+\infty$. We denote by $\|g\|_{H^{1,p}}$ the infimum of $\sum_j|\lambda_j|$ over all the possible decompositions  $g=\sum_j \lambda_ja_j$ with $a_j$ $(1,p)$-atoms.
  \end{defn}
  We also introduce the subspace \begin{align*}
      H^{1,p}_{\text{fin}}(m)=\bigg\{g \in H^{1,p}(m) \ : \ g=\sum_{j=1}^N \lambda_j a_j, \ N \in \mathbb{N}\bigg\}.
  \end{align*}
  The next result yields the equivalence of the $H^{1,p}(m)$ spaces when $p\in(1,+\infty].$ It is readily seen that $H^{1,\infty}(m) \subset H^{1,p}(m)$. For the converse, we use a variant of the Calder\'on--Zygmund decomposition, as follows.
  \begin{prop}\label{EqH} For any $p\in (1,+\infty)$ there exists $A_p>0$ such that the following estimate holds $$\|f\|_{H^{1,\infty}} \le A_p \|f\|_{H^{1,p}}, \qquad{\forall f \in H^{1,p}(m)}.$$ 
  Hence $ H^{1,p}(m)=H^{1,\infty}(m)$ and the norms   $\|\cdot\|_{H^{1,\infty}}$ and   $\| \cdot \|_{H^{1,p}}$ are equivalent. 
  \end{prop}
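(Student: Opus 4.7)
The inclusion $H^{1,\infty}(m)\subseteq H^{1,p}(m)$ with $\|\cdot\|_{H^{1,p}}\le\|\cdot\|_{H^{1,\infty}}$ is immediate, since by Hölder every $(1,\infty)$-atom satisfies $\|a\|_p\le\|a\|_\infty m(R)^{1/p}\le m(R)^{1/p-1}$ and is therefore also a $(1,p)$-atom. By linearity, to prove the nontrivial estimate it suffices to show that every $(1,p)$-atom $a$, supported on $R\in\mathcal R$, admits a decomposition $a=\sum_n\mu_n b_n$ with each $b_n$ a $(1,\infty)$-atom and $\sum_n|\mu_n|\le A_p$ depending only on $p$ and $\widetilde C$.

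The construction is an iterated Calderón--Zygmund scheme built on admissible trapezoids. Set $\alpha_0=2/m(R)$. Since $\|a\|_1\le\|a\|_p\, m(R)^{1/p'}\le 1$, one has $\frac{1}{m(R)}\int_R|a|\,dm<\alpha_0$, so Lemma \ref{stopping lemma} applies at level $\alpha_0$ on $R$ and produces a disjoint family $\mathcal F_0$ of sub-trapezoids of $R$. Define $\alpha_{k+1}=2\widetilde C\alpha_k$ and iterate: on each $E\in\mathcal F_k$ the stopping condition from the previous step gives $\frac{1}{m(E)}\int_E|a|\,dm\le\widetilde C\alpha_k<\alpha_{k+1}$, so Lemma \ref{stopping lemma} applies again, producing a disjoint family of sub-trapezoids of $E$. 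Let $\mathcal F_{k+1}$ be the union of these subfamilies over $E\in\mathcal F_k$, and set $\Omega_k=\bigcup_{E\in\mathcal F_k}E$. Then $\Omega_{k+1}\subseteq\Omega_k$ and $|a|<\alpha_k$ off $\Omega_k$.

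Introduce the martingale-like truncations $h_k=a\chi_{R\setminus\Omega_k}+\sum_{E\in\mathcal F_k}a_E\chi_E$ and decompose $a=h_0+\sum_{k\ge 0}(h_{k+1}-h_k)$, with convergence in $L^1$ because $m(\Omega_k)\to 0$ (see below). A direct computation shows that $\int_R h_0\,dm=0$ and $\|h_0\|_\infty\le\widetilde C\alpha_0=2\widetilde C/m(R)$, so $h_0$ is $2\widetilde C$ times a $(1,\infty)$-atom on $R$. For $k\ge 0$ one rearranges
\[
h_{k+1}-h_k=\sum_{E\in\mathcal F_k}\varphi_{k,E},\qquad \varphi_{k,E}=(a-a_E)\chi_{E\setminus\Omega_{k+1}}+\sum_{\substack{F\in\mathcal F_{k+1}\\F\subset E}}(a_F-a_E)\chi_F,
\]
and each $\varphi_{k,E}$ is supported in the admissible trapezoid $E$, integrates to zero on $E$, and satisfies $\|\varphi_{k,E}\|_\infty\le 2\widetilde C\alpha_{k+1}$ (because $|a|<\alpha_{k+1}$ off $\Omega_{k+1}$ while $|a_F|\le\widetilde C\alpha_{k+1}$ and $|a_E|\le\widetilde C\alpha_k\le\widetilde C\alpha_{k+1}$). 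Therefore $\varphi_{k,E}=\mu_{k,E}b_{k,E}$ with $b_{k,E}$ a $(1,\infty)$-atom on $E$ and $\mu_{k,E}\le 2\widetilde C\alpha_{k+1}m(E)$.

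The crux of the argument---and the only place where $p>1$ is used---is the summability of $\sum_{k,E}\mu_{k,E}$. Applying Jensen's inequality to the averages in Lemma \ref{stopping lemma} gives $\alpha_k^p\, m(E)\le\int_E|a|^p\,dm$ for every $E\in\mathcal F_k$; summing over the disjoint family $\mathcal F_k$ yields
\[
\alpha_k^p\, m(\Omega_k)\le\|a\|_p^p\le m(R)^{1-p}.
\]
Substituting $\alpha_{k+1}=2\widetilde C\alpha_k$ and $\alpha_k=(2\widetilde C)^k\alpha_0$ with $\alpha_0=2/m(R)$ gives
\[
\sum_{E\in\mathcal F_k}\mu_{k,E}\le 2\widetilde C\alpha_{k+1}m(\Omega_k)\lesssim (2\widetilde C)^{-(p-1)k},
\]
a geometric series in $k$ because $p>1$. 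Summing over $k\ge 0$ and adding the $2\widetilde C$ contribution of $h_0$ bounds $\|a\|_{H^{1,\infty}}$ by a constant $A_p$ as required. The main obstacle is organising the telescoping so that each $\varphi_{k,E}$ is simultaneously supported in an element of $\mathcal R$, mean-zero on that element, and has the correct $L^\infty$ bound; once this is in place, the $L^p$ hypothesis supplies the geometric decay needed to close the sum.
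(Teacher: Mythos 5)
Your argument is correct. It follows the same overall strategy as the paper's proof --- reduce to a single $(1,p)$-atom and run an iterated stopping-time construction based on Lemma \ref{stopping lemma} that peels off multiples of $(1,\infty)$-atoms supported on admissible trapezoids, with geometrically summable coefficients --- but the implementation is genuinely different in where the exponent $p$ enters. The paper applies the stopping lemma to $|f_{j_n}|^p$ at thresholds $\alpha^{(n+1)p}$ and must carry $L^p$ information on the residual pieces through the induction (its properties $(iii)$--$(vi)$), finally choosing $\alpha>\max\{2\widetilde C^{1/p},2^{p/(p-1)}\}$ to make the series converge. You instead run the stopping time on $|a|$ itself at thresholds $\alpha_k=(2\widetilde C)^k\alpha_0$, organize the decomposition as telescoping martingale truncations $h_k$, and invoke $p>1$ exactly once, via Jensen, to get $m(\Omega_k)\le\alpha_k^{-p}\|a\|_p^p$; the decay $\sum_{E\in\mathcal F_k}\mu_{k,E}=2\widetilde C\alpha_{k+1}m(\Omega_k)\lesssim(2\widetilde C)^{-(p-1)k}$ is then automatic because $2\widetilde C>1$. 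The verifications you flag as the main obstacle do all go through: each $\varphi_{k,E}$ is supported in the admissible trapezoid $E$, has zero mean because the averages telescope, and satisfies $\|\varphi_{k,E}\|_\infty\le 2\widetilde C\alpha_{k+1}$ since $|a|<\alpha_{k+1}$ off $\Omega_{k+1}$ while $|a_F|\le\widetilde C\alpha_{k+1}$ and $|a_E|\le\widetilde C\alpha_k$ by property $(ii)$ of the stopping lemma; the iteration is legitimate because $\frac{1}{m(E)}\int_E|a|\,dm<\widetilde C\alpha_k<\alpha_{k+1}$; and $h_k\to a$ in $L^1$ since $\|a-h_k\|_1\le 2\int_{\Omega_k}|a|\,dm\le 2\|a\|_p\,m(\Omega_k)^{1/p'}\to0$. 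Your bookkeeping is arguably cleaner than the paper's, at the cost of no extra generality.
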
\begin{proof} It suffices to prove that there exists a constant $A_p$ depending only on $p \in (1,+\infty)$ such that, for every $(1,p)$-atom $a$,  one has
  \begin{align}\label{eqH2}
      \|a\|_{H^{1,\infty}} \le A_p.
  \end{align}
  Let $a$ be a $(1,p)$-atom. We have that supp$(a) \subset Q \in  \mathcal{R}$, $\|a\|_p \le m(Q)^{1/p-1},$ $\int_{Q} a\ dm=0$. We define $b=m(Q)a$; we claim that $\forall n \in \mathbb{N}$, we can write 
  \begin{align*}
      b=\sum_{l=0}^{n-1}\widetilde{C}^{1/p}\alpha^{l+1}\sum_{j_l \in \mathbb{N}^l}m(R_{j_l})a_{j_l}+ \sum_{j_n \in \mathbb{N}^n}f_{j_n},
  \end{align*}
    where $\alpha>0$ is a constant to be chosen later on, $\widetilde{C}$ is as in Proposition \ref{costanti} and

  \begin{itemize}
      \item[$(i)$] $a_{j_l}$ is a $(1,\infty)$-atom supported in $R_{j_l}$, 
      \item[$(ii)$] supp $f_{j_n}\subset R_{j_n},$ $\int_{R_{j_n}} f_{j_n}\ dm=0,$ 
      \item[$(iii)$] $\bigg(\frac{1}{m(R_{j_n})}\int_{ R_{j_n}} |f_{j_n}|^p\  dm \bigg)^{1/p} \le  {2} \widetilde{C}^{1/p}\alpha^n,
      $ 
      \item[$(iv)$] $\sum_{j_n} \|f_{j_n}\|_p^p \le 2^{pn}\|b\|_p^p$, 
      \item[$(v)$] $|f_{j_n}(x)| \le b(x)+\widetilde{C}^{1/p}\alpha^n2^{n-1}$, 
      \item[$(vi)$] $\sum_{j_n} m(R_{j_n}) \le 2^{p(n-1)}\alpha^{-np}\|b\|_p^p.$
  \end{itemize}
  Assume the claim holds. Then
  \begin{align*}
      \Big\|\sum_{j_n \in \mathbb{N}^n} f_{j_n}\Big\|_1 &\le \sum m(R_{j_n})^{1/p'}\|f_{j_n}\|_{p} \le  {2}\sum m(R_{j_n})^{1/p'}m(R_{j_n})^{1/p}\widetilde{C}^{1/p}\alpha^n \\ 
      &\le  {2} \widetilde{C}^{1/p}\alpha^n 2^{p(n-1)}\alpha^{-np}\|b\|_p^p 
      \le  {2} \widetilde{C}^{1/p}2^{-p}(\alpha^{-p}2^p)^n m(Q),
  \end{align*}
  the last quantity tends to zero as $n \to +\infty$ if $\alpha>2^{\frac{p}{p-1}}.$ The previous computation shows that
  \begin{align*}
      b=\sum_{l=0}^{\infty} \widetilde{C}^{1/p}\alpha^{l+1}\sum_{j_l \in \mathbb{N}^l} m(R_{j_l})a_{j_l}
  \end{align*}
  where the series converges in $L^1(m)$. By properties $(vi)$ we have 
  \begin{align*}
      \sum_{l=0}^{\infty} \widetilde{C}^{1/p}\alpha^{l+1}\sum_{j_l \in \mathbb{N}^l} m(R_{j_l}) &\le \sum_{l=0}^{\infty} \widetilde{C}^{1/p}\alpha^{l+1}2^{p(l-1)}\alpha^{-lp}m(Q)=A_pm(Q),
  \end{align*}
  if $ \alpha>2^{p/(p-1)}$ and we conclude that $\|a\|_{H^{1,\infty}} \le A_p.$ \\ We now prove the claim by induction. Fix $n=1$ and notice that
  \begin{align*}
  \frac{1}{m(Q)} \int_{Q}|b|^p \ dm &= \frac{1}{m(Q)}m(Q)^{p} \int_Q{|a|^p} \ dm  \le 1<\alpha^p. 
  \end{align*}
  Apply Lemma \ref{stopping lemma} to the function $|b|^p$ with the constant $\alpha^p$, call $\{R_i\}_i$ the family of stopping sets and set $E=\cup_i R_i$. Define 
  \begin{align*}
      b=g+\sum_i f_i, \quad f_i=\bigg[b-\frac{1}{m(R_i)}\int_{R_i}b \ dm\bigg]\chi_{R_i}.
  \end{align*}
  By definition of $R_i$, $|g| < \alpha$ on $E^c$, and by  the H\"older's inequality and Lemma \ref{stopping lemma}, we have \begin{align}\label{HS}
      \bigg|\frac{1}{m(R_i)} \int_{R_i} b \ dm\bigg| <  \widetilde{C}^{1/p}\alpha,
  \end{align}
  which yields 
\begin{align*}
    \|f_i\|_p < \bigg(\int_{R_i} |b|^p \ dm \bigg)^{1/p}+ \widetilde{C}^{1/p}\alpha m(R_i)^{1/p} < 2 \widetilde{C}^{1/p}\alpha m(R_i)^{1/p}. 
\end{align*}
Moreover, by \eqref{HS} 
\begin{align*}
    |g(x)| \le \widetilde{C}^{1/p} \alpha \qquad{ \text{if} \ x \in R_i},
\end{align*}
thus $a_0= (\widetilde{C}^{1/p}\alpha m(Q))^{-1}g$ is a $(1,\infty)$-atom. We can write $b=g+\sum_i f_i = \widetilde{C}^{1/p} \alpha m(Q)a_0+\sum_i f_i$, obviously supp$f_i \subset R_i$, $\int f_i \ dm=0$ and   {$\|f_i\|_p \le 2\widetilde{C}^{1/p}\alpha m(R_i)^{1/p}$}. By definition of stopping set and $f_i$ we have 
\begin{align*}
    \sum_i m(R_i) \le \frac{\|b\|_p^p}{\alpha^p}, \quad \|f_i\|_p \le 2\|b\|_{L^p(R_i)},
\end{align*}
hence 
\begin{align*}
    \sum_i \|f_i\|_p^p \le 2^p \|b\|_p^p,
\end{align*}
and the claim is verified. \\ 
We now assume that the claim holds for $n \in \mathbb{N}.$ Then, for all $j_n \in \mathbb{N}^n$,
\begin{align*}
    \frac{1}{m(R_{j_n})} \int_{R_{j_n}} |f_{j_n}|^p \ dm \le  {2^p}\widetilde{C}\alpha^{np}<\alpha^{(n+1)p},
\end{align*} if we choose $\alpha> {2}\widetilde{C}^{1/p}.$ We apply Lemma \ref{stopping lemma} to each $R_{j_n}$ producing stopping sets $R_{j_n i}$, $i \in \mathbb{N}$, such that  
\begin{align*}
    \alpha^{(n+1)p} \le \frac{1}{m(R_{j_n i})} \int_{R_{j_n i}}|f_{j_n}|^p \ dm < \widetilde{C} \alpha^{(n+1)p}.
\end{align*} We define 
\begin{align*}
    f_{j_n i}=\bigg[f_{j_n}-\frac{1}{m(R_{j_n i})} \int_{R_{j_n i}} f_{j_n} \ dm \bigg] \chi_{R_{j_n i}}, \quad g_{j_n}= f_{j_n} - \sum_{i \in \mathbb{N}} f_{j_n i}.
\end{align*}
Then, arguing as above, $a_{j_n}=(\widetilde{C}^{1/p}\alpha^{(n+1)p} m(R_{j_n}))^{-1}g_{j_n}$ is a $(1,\infty)$-atom, $f_{j_n i}$ is supported in $R_{j_n i}$ and has zero integral, 
\begin{align*}
    \bigg( \frac{1}{m(R_{j_n i})} \int_{R_{j_n i}}|f_{j_n}|^p \ dm \bigg)^{1/p} \le \widetilde{C}^{1/p}\alpha^{n+1} <  {2}\widetilde{C}^{1/p}\alpha^{n+1},
\end{align*}
and
 \begin{align*}  
    |f_{j_n i}(x)| \le |f_{j_n}(x)|+ \widetilde{C}^{1/p}\alpha^{n+1}  \le |b(x)|+\widetilde{C}^{1/p}\alpha^n2^{n-1}+\widetilde{C}^{1/p}\alpha^{n+1}  \le
     |b(x)|+\widetilde{C}^{1/p}\alpha^{n+1}2^n. 
\end{align*}
     We deduce that
     \begin{align*}
          \sum_{j_n i} \|f_{j_n i}\|_p^p &\le \sum_{j_n} 2^p \|f_{j_n}\|_p^p \le 2^{p(n+1)}\|b\|_p^p, \\ 
     \sum_{j_n i}m(R_{j_n i}) &\le \frac{1}{\alpha^{(n+1)p}} \sum_{j_n}\sum_i \int_{R_{j_n i}}  |f_{j_n}|^p \ dm 
    \le \frac{1}{\alpha^{(n+1)p}} \sum_{j_n}\|f_{j_n}\|_p^p \le \frac{1}{\alpha^{(n+1)p}}2^{pn}\|b\|_p^p
     \end{align*}
    and this concludes the proof.
  \end{proof}
  In the sequel we write $H^{1}(m)$ in place of $H^{1,\infty}(m)$ and $H^1_{\text{fin}}(m)$ in place of $H^{1,\infty}_{\text{fin}}(m)$. \\ 
  \begin{oss}
 We now show that the Hardy space $H^1(m)$ does not depend on the choice of $\beta.$ \\ 
Fix $12 \le \beta<\beta'$ and denote by $\mathcal{R}$ and $\mathcal{R}'$ the family of admissible trapezoids corresponding to the parameters $\beta$ and $\beta'$, respectively. We call $H^1_{\beta}(m)$ and $H^1_{\beta'}(m)$ the correspondent Hardy spaces with atoms supported in  sets  in $\mathcal{R}$ and $\mathcal{R}'$  respectively. It is clear that $H^1_{\beta}(m) \subset H^1_{\beta'}(m)$. For the reverse inclusion, we prove that any $(1,\infty)$-atom in $H^1_{\beta'}(m)$ can be decomposed as the sum of multiples of $(1,\infty)$-atoms in $H^1_{\beta}(m)$ in such a way that the norm is uniformly bounded. \\ 
First assume $\beta' \le 2 \beta$. Consider a $(1,\infty)$-atom $a \in H^1_{\beta'}(m)$ supported in a set $R=R_{h'}^{h''}(x) \in \mathcal{R}' \setminus \mathcal{R}.$ \\ 
By applying the decomposition algorithm to $R$ we obtain $R_1=R_{h'}^{2h'}(x)$ and $R_2=R_{2h'}^{h''}(x).$  Now we call $T=R_{2h'}^{4h'}(x),\ T_1=R_{h'}^{4h'}(x), \ T_2=R_2$. Obviously $R_1, R_2, T, T_1 \in \mathcal{R}.$ We define
\begin{align*}
    \varphi_i= a \chi_{R_i}- \bigg(\frac{1}{m(T)} \int_V a \chi_{R_i} \ dm \bigg) \chi_{T}, \quad i=1,2.
\end{align*}
We have that $\int_{V} \varphi_i \ dm=0$ for $i=1,2$ and $\varphi_1+\varphi_2=a$ as consequence of the vanishing integral of $a$. Moreover,  
\begin{align*}
    \| \varphi_i\|_\infty \le 2 \|a\|_\infty \le \frac{2}{m(R)} \le \frac{2}{m(T_i)},
\end{align*}
for $i=1,2.$ Observe that $\varphi_i$ is supported in $R_i \cup T =T_i$ because $4h' <h''.$ Thus $\varphi_i/2$ is a $(1,\infty)$-atom supported in $T_i \in \mathcal{R}$ and $H^1_{\beta'}(m) \subset H^1_{\beta}(m)$. \\  
Suppose now that $2^{n-1} \beta \le \beta' \le 2^n \beta$ for some $n >1$. We observe that $H^1_{\beta'}(m) = H^1_{\beta'/2}(m) = H^1_{\beta'/4}(m)\dots = H^1_{\beta'/2^n}(m).$  Thus $H^1_{\beta'}(m)=H^{1}_{\beta}(m)$ and this concludes the proof.  \qed
  \end{oss}
  
\begin{oss} If we choose $T=\mathbb{T}_{q+1}$ and $\mu(\cdot)=q^{\ell(\cdot)}$ as a particular flow measure,  it can be used a similar argument in order to show the equivalence of $H^1_{\beta}(\mu)$ and the space $H^1(\mu)$ introduced in \cite{ATV1}.
\end{oss}
  \subsection{Duality between $H^{1}(m)$ and $BMO(m)$} We now establish the duality between $H^1(m)$ and $BMO(m)$. We first need a lemma which provides a covering of $V$ made by an increasing family of admissible trapezoids.
  \begin{lem}\label{TR}
There exists a family   $\{R_j\}_j \subset \mathcal{R}$ such that $R_j \subset R_{j+1}$ and $\cup_j R_j=V$.
\end{lem}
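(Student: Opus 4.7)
My plan is to construct the exhausting family explicitly using trapezoids based at vertices of the distinguished geodesic $g$, with parameters growing linearly in $j$. Write $g = \{y_0, y_1, y_2, \ldots\}$ with $y_0 = o$ and $y_{k+1} = p(y_k)$, so that $\ell(y_k) = k$. The underlying idea is that the tents $V_{y_k} = \{x \in V : x \le y_k\}$ form an increasing exhaustion of $V$: every $x \in V$ lies below its projection $\pi := \Pi_{\zeta_g}(x) \in g$ (since $\pi = x \wedge \zeta_g$), so $x \le y_k$ as soon as $k \ge \ell(\pi)$. It therefore suffices to approximate each $V_{y_k}$ from within by an admissible trapezoid whose level window simultaneously extends upward to $+\infty$ and downward to $-\infty$.

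The concrete choice I intend to use is
\begin{equation*}
R_j := R_{2j}^{4j}(y_{3j}), \qquad j \ge 1,
\end{equation*}
which is admissible since $h''/h' = 4j/2j = 2 \in [2,\beta]$ (recall $\beta \ge 12$), and whose definition unfolds to
\begin{equation*}
R_j = \{ x \in V : x \le y_{3j},\ -j < \ell(x) \le j \}.
\end{equation*}
The nestedness $R_j \subset R_{j+1}$ will follow from two elementary observations: the comparison $y_{3j} \le y_{3(j+1)}$ (because $y_{3(j+1)}$ is an ancestor of $y_{3j}$ on $g$), together with the trivial inclusion of level ranges $(-j,j] \subset (-j-1, j+1]$. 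For the covering property $\bigcup_j R_j = V$, given any $x \in V$ I will pick $j$ so large that $3j \ge \ell(\pi)$ (which forces $x \le y_{3j}$ by transitivity of the ancestor relation) and simultaneously $j \ge \max\{\ell(x),\, 1 - \ell(x)\}$ (which secures $-j < \ell(x) \le j$); then $x \in R_j$.

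The main obstacle that this construction has to negotiate is the admissibility ratio constraint $h''/h' \le \beta$: a trapezoid cannot be made arbitrarily deep relative to its top, so to reach arbitrarily low levels one must also push the base vertex up along $g$ rapidly enough that $h'$ can grow in step with $h''$. Placing the base at level $3j$ while taking $(h', h'') = (2j, 4j)$ is the minimal symmetric arrangement in which both ends of the level window escape to infinity at the same linear rate, under the fixed ratio $h''/h' = 2$. Any tall narrow shape (e.g.\ $h' = 1$) would violate admissibility for large $j$; anchoring the base at a bounded level would leave either the top or the bottom of the trapezoid stuck. Once this balance is fixed, the verification of the two required properties reduces to the straightforward level and ancestor comparisons sketched above.
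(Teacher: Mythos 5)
Your construction is correct: $R_{2j}^{4j}(y_{3j})=\{x\in V:\ x\le y_{3j},\ -j<\ell(x)\le j\}$ is admissible (ratio exactly $2$), the nesting follows from $y_{3j}\le y_{3(j+1)}$ and the inclusion of level windows, and the covering follows because every $x$ satisfies $x\le \Pi_{\zeta_g}(x)\le y_k$ for all large $k$ while $|\ell(x)|$ is fixed. This is, however, a genuinely different route from the paper's: there the family is generated iteratively by applying the expansion algorithm to the singleton $\{x_0\}$, alternating up- and down-vertical expansions. Your explicit formula is more elementary and makes the verification transparent, and it fully proves the lemma as stated. What the paper's iterative construction buys is extra structure that the statement of the lemma does not record but that is exploited later in the proof of Theorem \ref{dyadic}: there each $R_{j+1}$ is a single expansion step of $R_j$, so $m(R_{j+1})\le\widetilde C\, m(R_j)$ and, for vertical expansions, $R_{j+1}\setminus R_j$ is again an admissible trapezoid, which is what allows the strips $\mathcal B(R_\ell\setminus R_{\ell-1})$ to tile $V$ into a dyadic grid. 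With your family, $m(R_{j+1})/m(R_j)=2m(y_{3j+3})/m(y_{3j})$ need not be bounded by $\widetilde C$ and $R_{j+1}\setminus R_j$ is an L-shaped region rather than a trapezoid, so your family could not be substituted into that later argument without modification; as a proof of Lemma \ref{TR} itself it is complete. (A minor point: the identification $\Pi_{\zeta_g}(x)=x\wedge\zeta_g$ is not literally meaningful with the paper's definition of the confluent when one argument is the root at infinity; all you need, and all you actually use, is that $\Pi_{\zeta_g}(x)$ lies on $[x,\zeta_g]$, hence $x\le\Pi_{\zeta_g}(x)$.)
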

\begin{proof}
Let $R_0=\lbrace x_0\rbrace$, and define $R_j$ to be the output of the expansion algorithm applied to $R_{j-1}$ for $j\geq 1$. About vertical expansions, choose at random whether to expand up or down for the first occurring one (which is the one producing $R_4$ out of $R_3$) and then always alternate them (for example, if we decide to extend $R_3$ down, the next vertical expansion will be up). Observe that a vertical expansion always needs to be followed by a vertical one. The opposite is not true, but still horizontal and vertical expansions will definitely alternate since, for any given $R_{h'}^{h''}(x_0)\in \mathcal{R}$, it holds $\frac{h''+k}{h'+k}<3$ for $k$ large enough. It is then clear that $R_j\subset R_{j+1}$ and $\cup_{j\geq 0}R_j=V$.
\end{proof}
\begin{thm}[Duality between $H^1(m)$ and $BMO(m)$] 
  (i) Suppose $f \in BMO(m)$. Then the linear functional $\ell$ given by  \begin{align*}
        \ell(g)= \int_V fg\  dm, 
    \end{align*} initially defined on the dense subspace $H^1_{\mathrm{fin}}(m)$, has a unique bounded extension to $H^1(m)$ and there exists $C>0$ such that
    \begin{align*}
        \|\ell\|_{(H^1)'} \le C \|f\|_{BMO}.
    \end{align*}
 (ii)   Conversely, every continuous linear functional $\ell$ on $H^1(m)$ can be realized as above, with $f \in BMO(m)$, and there exists $C>0$ such that
 \begin{align*}
     \|f\|_{BMO} \le C \| \ell\|_{(H^1)'}.
 \end{align*}
\end{thm}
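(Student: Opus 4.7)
My plan follows the classical Fefferman--Stein duality scheme adapted to the atomic $H^1$ built on admissible trapezoids. Part (i) is the easier direction: for each $(1,\infty)$-atom $a$ supported in $R\in\mathcal{R}$, the vanishing mean property lets us subtract the average $f_R$ for free,
\begin{equation*}
|\ell(a)|=\Bigl|\int_R (f-f_R)\,a\,dm\Bigr|\le \|a\|_\infty\int_R|f-f_R|\,dm\le \|f\|_{BMO}.
\end{equation*}
For a finite atomic combination $g=\sum_{j=1}^N\lambda_j a_j$ this gives $|\ell(g)|\le\|f\|_{BMO}\sum_j|\lambda_j|$; taking the infimum over all (possibly infinite) atomic decompositions of $g$ yields $|\ell(g)|\le\|f\|_{BMO}\|g\|_{H^1}$ on $H^1_{\text{fin}}(m)$. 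The passage from finite to infinite decompositions is legitimate because $g$ has finite support, so interchanging sum and integral is justified by absolute convergence. Density of $H^1_{\text{fin}}(m)$ in $H^1(m)$ is immediate from the definition via absolutely summable combinations of atoms, and the functional extends uniquely.

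For part (ii) I plan to use local Riesz representation on the exhaustion $\{R_j\}$ of $V$ from Lemma \ref{TR}, followed by patching. For each $j$, every nonzero $h$ in the space $L^2_0(R_j)$ of mean-zero functions on $R_j$ can be renormalized to a $(1,2)$-atom $h/(\|h\|_2\, m(R_j)^{1/2})$, whose $H^1$-norm is bounded by $A_2$ thanks to Proposition \ref{EqH}. Consequently $h\mapsto\ell(h)$ is a bounded linear form on $L^2_0(R_j)$, and the Riesz representation theorem produces $f^{(j)}\in L^2_0(R_j)$, normalized to have zero mean on $R_j$, satisfying $\ell(h)=\int_{R_j}f^{(j)}h\,dm$ for every such $h$ (up to the standard harmless conjugation).

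Consistency is the central step: if $j<k$ and $h\in L^2_0(R_j)$, extending $h$ by zero places it in $L^2_0(R_k)$, so
\begin{equation*}
\int_{R_j}\bigl(f^{(k)}|_{R_j}-f^{(j)}\bigr)h\,dm=0
\end{equation*}
for every such $h$, forcing $f^{(k)}|_{R_j}-f^{(j)}$ to be a constant on $R_j$. Telescoping these constants inductively along the exhaustion, one obtains a single function $f:V\to\mathbb{C}$ satisfying $\ell(h)=\int_R fh\,dm$ for every $R\in\mathcal{R}$ and every mean-zero $h\in L^2(R)$ (the added constants vanish against $h$). To control $\|f\|_{BMO}$, given $R\in\mathcal{R}$ I would test against $h=\overline{(f-f_R)}\chi_R\in L^2_0(R)$: a direct computation gives $\ell(h)=\|f-f_R\|_{L^2(R)}^2$, while the atomic bound yields $|\ell(h)|\le A_2\|\ell\|_{(H^1)'}\|f-f_R\|_{L^2(R)}\,m(R)^{1/2}$. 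Combining the two,
\begin{equation*}
\Bigl(\frac{1}{m(R)}\int_R|f-f_R|^2\,dm\Bigr)^{1/2}\le A_2\|\ell\|_{(H^1)'}.
\end{equation*}
This gives $\|f\|_{BMO_2}\le A_2\|\ell\|_{(H^1)'}$, and H\"older's inequality (or the John--Nirenberg equivalence of $BMO_q$ norms) upgrades this to the required bound on $\|f\|_{BMO}=\|f\|_{BMO_1}$.

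The main obstacle is the patching step: each local identification $f^{(k)}|_{R_j}-f^{(j)}\equiv \text{const}$ is automatic from Riesz, but combining them coherently into one function on $V$ requires a monotone exhaustion, which is precisely what Lemma \ref{TR} supplies; without it, one would have to argue with a more delicate net of overlapping trapezoids. The other nontrivial ingredient is the passage from the $L^2$-output of Riesz to the $L^1$-definition of $BMO$, which here is bridged painlessly by the equivalence of $BMO_q$ norms established through the John--Nirenberg inequality.
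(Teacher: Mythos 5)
Your part (ii) is essentially the paper's own argument: Riesz representation on the mean-zero subspaces $L^2_{R,0}$ (using that such functions are multiples of $(1,2)$-atoms together with Proposition \ref{EqH}), uniqueness of the representatives modulo constants, patching along the monotone exhaustion of Lemma \ref{TR}, and a $BMO_2\Rightarrow BMO_1$ bound at the end. Your explicit test function $h=\overline{(f-f_R)}\,\chi_R$ is a clean way of making precise the step the paper dismisses as ``easy to verify''.

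The one genuine gap is in part (i), in the sentence claiming that the interchange of sum and integral ``is justified by absolute convergence'' because $g$ has finite support. To bound $\ell(g)$ by $\|f\|_{BMO}\|g\|_{H^1}$ you must establish $|\int_V fg\,dm|\le\|f\|_{BMO}\sum_j|\lambda_j|$ for \emph{every} atomic decomposition $g=\sum_j\lambda_ja_j$, including infinite ones whose atoms sit on trapezoids far outside the support of $g$. The series $\sum_j\lambda_j\int_V fa_j\,dm$ does converge absolutely (each term is at most $|\lambda_j|\,\|f\|_{BMO}$ thanks to the cancellation of $a_j$), but that alone does not show it equals $\int_V fg\,dm$: the double series $\sum_j\sum_x|\lambda_j|\,|f(x)|\,|a_j(x)|\,m(x)$ is controlled only by $\sum_j|\lambda_j|\bigl(\|f\|_{BMO}+|f_{R_j}|\bigr)$, and the averages $|f_{R_j}|$ are unbounded for a general $f\in BMO(m)$, so Fubini is not available. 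The standard repair --- the one the paper implicitly invokes by citing Fefferman--Stein and Grafakos --- is to first prove the estimate for bounded $f$ (where the interchange follows from $L^1$-convergence of the partial sums against a bounded function), then truncate a general $f$ to $f^{(k)}$ with $\|f^{(k)}\|_{BMO}\lesssim\|f\|_{BMO}$ and pass to the limit in $\int_V f^{(k)}g\,dm$; it is at this last limit, not at the interchange, that the finite support of $g\in H^1_{\mathrm{fin}}(m)$ is actually used.
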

\begin{proof} For the proof of $(i)$ we can closely follow  \cite{FS, gra} for the Euclidean setting.

 We prove $(ii)$. For every $R \in  \mathcal{R}$ we denote by $L^2_R$ the space of all square summable functions supported in $R$ with norm ${L^2}$ and by $L^2_{R,0}$ its closed subspace of function with integral zero. Note that $g \in L^2_{R,0}$ implies that $g$ is a multiple of a $(1,2)$-atom and that $\|g\|_{H^1} \le A_2m(R)^{1/2} \|g\|_{L^2}.$ Thus, if $\ell$ is a given functional on $H^1(m)$ then $\ell$ extends to a linear functional on $L^2_{R,0}$ with norm at most $A_2m(R)^{1/2}\|\ell\|_{(H^1)'}$ by Proposition \ref{EqH}.
 
Since the dual of $L^2_{R,0}$ is the quotient of $L^{2}_{R}$ module constant functions, by the Riesz theorem, there exists a unique $F^R$ in $L^2_{R}$ module constant functions such that
   \begin{align*}
       \ell(g)= \int_R F^{R}g\  dm, \ \forall g \in L^2_{R,0}, \quad \text{and} \quad \|F^R\|_{L^2(R)} \le A_2m(R)^{1/2} \| \ell\|_{(H^1)'}.
   \end{align*}
    Observe that if $R \subset R'$ then $F^R-F^{R'}$  is constant on $R.$ Let ${R}_j$ as in Lemma \ref{TR} for $j=0,1,2,... .$ Define  $f :V \to \mathbb{C}$ by setting 
    \begin{align*}
        f(x)=F^{R_j}(x)- \frac{1}{m(R_1)}\int_{ R_1} F^{R_j} \ dm
    \end{align*}
    whenever $x \in {R}_j$. It is easy to verify that the definition of $f$ in unambiguous and $f \in BMO(m)$.  \end{proof} 

\section{Interpolation and integral operators}\label{s: int}
We will prove here some interpolation results involving  Hardy and $BMO$ spaces. The real interpolation results will be essentially a consequence of the Calder\'on--Zygmund decomposition that we constructed in Section \ref{subs: CZ}. To obtain complex interpolation results we will need to discuss  the sharp maximal function associated with the admissible trapezoids.  

\subsection{Real interpolation}

In this subsection we study the real interpolation of $H^1(m)$, $BMO(m)$ and the $L^p(m)$ spaces. We refer the reader to \cite{Jo} for an overview of the real interpolation results which hold in the classical setting. Our aim is to prove similar results in our context. Note that in our case a maximal characterization of $H^1(m)$ is not available, so that we cannot follow the classical proofs but we shall only exploit the atomic definition of $H^1(m)$. We also notice that the proofs of the our results follow closely those of \cite[Section 5]{V}. 

We first recall some notation of the real interpolation of normed spaces, focusing on the $K$-method. For the details see \cite{BL}. 

Given two compatible normed spaces $A_0$ and $A_1$, for any $t>0$ and for any $a\in A_0+A_1$ we define
$$K(t,a;A_0,A_1)=\inf\{ \|a_0\|_{A_0}+t\|a_1\|_{A_1}:~a=a_0+a_1,\,a_i\in A_i \}\,.$$ 
Take $q\in [1, \infty]$ and $\theta\in (0,1)$. The {\emph{real interpolation space}} $\big[A_0,A_1\big]_{\theta,q}$ is defined as the set of the elements $a\in A_0+A_1$ such that
$$\|a\|_{\theta,q}=\begin{cases}
\Big(\int_0^{\infty}\big[t^{-\theta}\,K(t,a;A_0,A_1)\big]^q \frac{\di t}{t} \Big)^{1/q}&{ \text{if~}} 1\leq q<\infty\,,\\
\|t^{-\theta}\,K(t,a;A_0,A_1)\|_{\infty}&{ \text{if~}} q=\infty\,,
\end{cases}
$$
is finite. 

\smallskip
We shall first estimate the $K$ functional of $L^{p}$-functions with respect to the couple of spaces $(H^1(m),L^{p_1}(m))$, $1<p_1\leq \infty$.
\begin{lem}\label{intpinfty}
	Suppose that $1<p< p_1\leq \infty$ and let $\theta\in (0,1)$ be such that $\frac{1}{p}=1-\theta+\frac{\theta}{p_1}$. Let $f$ be in $L^p(m)$. The following hold:
	\begin{enumerate}
		\item[(i)] there exist positive constants $D_1, D_2$ such that for every $\lambda>0$ there exists a decomposition $f=g^{\lambda}+b^{\lambda}$ in $L^{p_1}(m)+H^1(m)$ such that
		\begin{enumerate}
			\item[(i1)] $\|g^{\lambda}\|_{\infty} \leq \widetilde C^{1/p}\,\lambda$ and, if $p_1<\infty$,  $\|g^{\lambda}\|_{p_1}^{p_1}\leq D_1\,\lambda^{p_1-p}\,\|f\|_{p}^p$;
			\item[(i2)] $\|b^{\lambda}\|_{H^1}\leq D_2\,\lambda^{1-p}\,\|f\|_{p}^p\,;  $
		\end{enumerate}
		\item[(ii)] there exists a constant $K_p>0$ such that
		\begin{enumerate}
			\item[(ii1)] for any $t>0$, $K(t,f;H^1(m),L^{p_1}(m))\leq K_p\,t^{\theta}\,\|f\|_{p};$
			\item[(ii2)] $f\in  [H^1(m),L^{p_1}(m)]_{\theta,\infty}$ and $\|f\|_{\theta,\infty}\leq K_p\,\|f\|_{p}.$ 
		\end{enumerate}
	\end{enumerate}
\end{lem}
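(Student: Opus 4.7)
\emph{Proof proposal.} My plan is to build the decomposition in (i) by a Calderón--Zygmund style stopping argument applied to $|f|^p$ at the level $\lambda^p$, and then to derive (ii) by inserting the decomposition into the $K$-functional and optimising the free parameter $\lambda$.

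For part (i), I would first apply Lemma \ref{10} to produce a partition $\mathcal{P}\subset\mathcal{R}$ of $V$ whose members all satisfy $m(R)>\|f\|_p^p/\lambda^p$; by H\"older's inequality this ensures $\frac{1}{m(R)}\int_R|f|^p\,dm<\lambda^p$ on every $R\in\mathcal{P}$. I would then apply Lemma \ref{stopping lemma} to $|f|^p$ at threshold $\lambda^p$ on each $R\in\mathcal{P}$, collecting the stopping sets into a disjoint family $\{E_i\}\subset\mathcal{R}$ with $\lambda^p\le \frac{1}{m(E_i)}\int_{E_i}|f|^p\,dm<\widetilde C\lambda^p$, and with $|f(x)|<\lambda$ off $\bigcup_i E_i$. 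Setting $g^\lambda(x)=f_{E_i}$ on each $E_i$ and $g^\lambda(x)=f(x)$ elsewhere, and $b^\lambda=f-g^\lambda=\sum_i b^i$ with $b^i=(f-f_{E_i})\chi_{E_i}$, gives the candidate decomposition.

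The estimate in (i1) is immediate: off $\bigcup_iE_i$ one has $|g^\lambda|<\lambda$, while on each $E_i$ Jensen's inequality gives $|f_{E_i}|\le\bigl(\frac{1}{m(E_i)}\int_{E_i}|f|^p\,dm\bigr)^{1/p}<\widetilde C^{1/p}\lambda$. The same Jensen estimate yields $\|g^\lambda\|_p\le\|f\|_p$, and interpolating against the $L^\infty$ bound gives $\|g^\lambda\|_{p_1}^{p_1}\le\widetilde C^{(p_1-p)/p}\lambda^{p_1-p}\|f\|_p^p$. For (i2) I would check that each $b^i$ is supported in $E_i$, has vanishing integral, and satisfies $\|b^i\|_p\le 2\widetilde C^{1/p}\lambda\,m(E_i)^{1/p}$ via the triangle inequality; hence $b^i=2\widetilde C^{1/p}\lambda\,m(E_i)\,a_i$ for a $(1,p)$-atom $a_i$. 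Proposition \ref{EqH} then gives $\|a_i\|_{H^1}\le A_p$, and summing,
\[
\|b^\lambda\|_{H^1}\le 2\widetilde C^{1/p}A_p\lambda\sum_i m(E_i)\le 2\widetilde C^{1/p}A_p\lambda\cdot\lambda^{-p}\|f\|_p^p,
\]
which is (i2) with $D_2=2\widetilde C^{1/p}A_p$.

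For part (ii), using the decomposition as a competitor in the definition of the $K$-functional gives
\[
K(t,f;H^1(m),L^{p_1}(m))\le D_2\lambda^{1-p}\|f\|_p^p+t\,\|g^\lambda\|_{p_1},
\]
with $\|g^\lambda\|_{p_1}$ controlled by (i1) (either $\widetilde C^{1/p}\lambda$ if $p_1=\infty$, or $D_1^{1/p_1}\lambda^{(p_1-p)/p_1}\|f\|_p^{p/p_1}$ otherwise). A standard calculus optimisation in $\lambda$, which in the endpoint case $p_1=\infty$ corresponds to the choice $\lambda\simeq(\|f\|_p^p/t)^{1/p}$, yields (ii1) with exponent $\theta$ matching the scaling relation $\frac{1}{p}=1-\theta+\frac{\theta}{p_1}$; then (ii2) is just the definition of the $\|\cdot\|_{\theta,\infty}$ norm. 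The one step that requires care is the identification of each $b^i$ as a multiple of a $(1,p)$-atom with the right constant, so that Proposition \ref{EqH} transfers the $L^p$ control into an $H^1$ bound; the rest is routine bookkeeping and convex optimisation.
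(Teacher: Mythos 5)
Your proposal is correct and follows essentially the same route as the paper: the paper likewise applies the Calder\'on--Zygmund decomposition of $|f|^p$ at height $\lambda^p$ (built from Lemma \ref{10} and Lemma \ref{stopping lemma}, exactly as you reconstruct it), identifies each $b^i$ as a multiple of a $(1,p)$-atom and invokes Proposition \ref{EqH} to get the $H^1$ bound, and then obtains (ii) by optimising the $K$-functional over $\lambda$. Your derivation of the $L^{p_1}$ bound on $g^\lambda$ via Jensen plus $L^\infty$--$L^p$ interpolation is a minor, equally valid variant of the paper's direct splitting of the integral.
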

\begin{proof}

	Let $f$ be in $L^p(m)$.
	We first prove $(i)$. Given a positive $\lambda$, let $\{R_i\}_i \subset \mathcal{R}$ be the collection of admissible trapezoids associated with the Calder\'on--Zygmund decomposition of $|f|^p$ corresponding to the value $\lambda ^p$. We write  
	$$
	f= g^{\lambda}+b^{\lambda}=g^{\lambda}+\sum_ib^{\lambda}_i=g^{\lambda}+\sum_i(f-f_{R_i})\chi_{R_i}\,.
	$$
	We then have
	$$\|g^{\lambda}\|_{{\infty}}\leq {\widetilde C}^{1/p}\,\lambda, \qquad  \frac{1}{m(R_i)}\int_{R_i}|f|^pd m \le   \widetilde{C} \lambda^p \qquad{ \text{and}}\qquad |f_{R_i}|\leq \widetilde C^{1/p} \lambda .$$
	If $p_1<\infty$, then
	\begin{align*}
	\|g^{\lambda}\|_{{p_1}}^{p_1}&\leq \int_{(\bigcup  R_i)^c}|f|^{p_1}dm+\sum_i\int_{ R_i}|f_{R_i}  |^{p_1}dm\\
		&\leq  \int_{(\bigcup  R_i)^c}|f|^{p_1-p}|f|^pdm+\widetilde C^{p_1/p}\lambda^{p_1}\sum_{R_i}m(R_i)\leq D_1\lambda^{p_1-p}\|f\|_{p}^p\,.
\end{align*}

		Thus $(i1)$ holds.
	Concerning $(i2)$, for any $i$, $b_i^{\lambda}$ is supported in $R_i$, has vanishing integral and
	$$
	\|b_i^{\lambda}\|_{p} \lesssim  m(R_i)^{1/p} \lambda \lesssim \lambda m(R_i)m(R_i)^{-1+1/p} \,.
	$$
	This shows that $b_i^{\lambda}\in H^{1,p}(m)$ and $\|b_i^{\lambda}\|_{H^1}\lesssim\,\lambda\,m( R_i)$. Since $b^{\lambda}=\sum_ib_i^{\lambda}$, $b^{\lambda}$ is in $H^{1}(m)$ and 
	$$\|b^{\lambda}\|_{H^{1}}\lesssim\,\lambda\,\sum_im( R_i)\leq D_2\,\lambda\,\frac{\|f\|_{p}^p}{\lambda^p}\,,$$
	as required.
	
	We now prove $(ii)$. Fix $t>0$. For any positive $\lambda$, let $f=g^{\lambda}+b^{\lambda}$ be the decomposition of $f$ in $L^{p_1}(m)+H^1(m)$ given by $(i)$. Thus 
	\begin{align*}
	K(t,f;H^1(m),L^{p_1}(m))&\leq \inf_{\lambda>0} \big( \|b^{\lambda}\|_{H^1}+t\,\|g^{\lambda}\|_{{p_1}} \big)
	\lesssim\,\inf_{\lambda>0}\big(\lambda^{1-p}\,\|f\|_{p}^{p}+t\,\lambda^{1-p/p_1}\|f\|_{p}^{p/p_1}  \big)\,.
	\end{align*}
	Arguing as in \cite[p. 292]{V} it follows that there exists a positive constant $K_p$ such that 	
	$$K(t,f;H^1(m),L^{p_1}(m))\leq K_p\,\|f\|_{p}\,t^{\theta}\,,$$
	proving $(ii1)$. This implies that $$\|t^{-\theta}\,K(t,f;H^1(m),L^{p_1}(m))\|_{{\infty}}\leq K_p\,\|f\|_{p},$$ so 
	that $f\in [H^1(m),L^{p_1}(m)]_{\theta,\infty}$ and $\|f\|_{\theta,\infty}\leq K_p\|f\|_{p}$, as required in $(ii2)$.
	
\end{proof}
Mimicking the proofs of \cite[Th. 5.2, Cor. 5.4 and 5.7, Prop. 5.6]{V}, we deduce from Lemma \ref{intpinfty} the subsequent results.
\begin{thm}\label{realintH1Lp2}
The following hold:
\begin{itemize}
\item[(i)]  Let $1<p<p_1\leq \infty$ and $\theta\in(0,1)$ be such that $\frac{1}{p}=1-\theta+\frac{\theta}{p_1}$. Then 
	$$\big[H^1(m),L^{p_1}(m)  \big]_{\theta,p}=L^p(m)\,.$$
	\item[(ii)] Let $1\leq q_1<q<\infty$  and $\frac1q=\frac{1-\theta}{q_1}$, with $\theta\in (0,1)$. Then 
	$$\big[L^{q_1}(m),BMO(m)\big]_{\theta,q}=L^q(m)\,.$$
	\item[(iii)] Let $1<q<\infty$  and $\frac1q={1-\theta}$, with $\theta\in (0,1)$. Then 
	$$\big[H^{1}(m),BMO(m)\big]_{\theta,q}=L^q(m)\,.$$

	\end{itemize}
\end{thm}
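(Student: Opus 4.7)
The plan is to derive all three interpolation identities from the $K$-functional estimate of Lemma \ref{intpinfty} together with the $H^1(m)$--$BMO(m)$ duality, following the template of \cite[Section 5]{V}. For part (i), the nontrivial direction is $L^p(m) \hookrightarrow [H^1(m), L^{p_1}(m)]_{\theta, p}$: Lemma \ref{intpinfty}(ii) alone gives only the weak-type inclusion into $[\cdot]_{\theta, \infty}$, so I would sharpen the argument by choosing the Calder\'on--Zygmund level $\lambda = \lambda(t)$ as a function of $t$ and writing $\int_0^\infty (t^{-\theta} K(t,f))^p \,\frac{dt}{t}$ as a sum over dyadic levels of $\lambda$, exploiting the tighter bound $\sum_i m(R_i) \lesssim \lambda^{-p} \int_{\{|f|>\lambda\}}|f|^p\,dm$ so that the pieces recombine to $\|f\|_p^p$. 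The reverse inclusion is straightforward: every $(1,\infty)$-atom $a$ supported in $R$ satisfies $\|a\|_1 \le \|a\|_\infty\,m(R) \le 1$, hence $H^1(m) \hookrightarrow L^1(m)$ with unit norm, giving
\[
[H^1(m), L^{p_1}(m)]_{\theta,p} \hookrightarrow [L^1(m), L^{p_1}(m)]_{\theta, p} = L^p(m),
\]
by the classical real interpolation identity for Lebesgue spaces (\cite{BL}).

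For part (ii), I would combine part (i) with the duality theorem of real interpolation (\cite[Thm.~3.7.1]{BL}): under suitable density conditions, which follow from density of $H^1_{\mathrm{fin}}(m)$ in $H^1(m)$ and of compactly supported mean-zero simple functions in $L^{p_1}(m)$, one has
\[
\bigl([H^1(m), L^{p_1}(m)]_{\theta, p}\bigr)^{\!*} = [BMO(m), L^{p_1'}(m)]_{\theta, p'}.
\]
Dualizing (i) and relabelling $q_1 = p_1'$, $q = p'$ (with $\theta \leftrightarrow 1-\theta$ via swapping the endpoints) gives (ii) for $q_1 > 1$; the endpoint $q_1 = 1$ then follows by an approximation argument since $L^{q_1}(m) \cap L^q(m)$ is dense in $L^q(m)$. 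For part (iii), a reiteration argument (Wolff's theorem, or \cite[Thm.~3.5.3]{BL}) applied to the couple $(H^1(m), BMO(m))$ with $L^{p_0}(m)$ and $L^{p_2}(m)$ as intermediate spaces, for any $1 < p_0 < q < p_2 < \infty$, collapses $[H^1(m), BMO(m)]_{\theta, q}$ to $[L^{p_0}(m), L^{p_2}(m)]_{\eta, q} = L^q(m)$ for the appropriate $\eta$ dictated by $(\theta, \eta_0, \eta_2)$ coming from (i) and (ii).

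The main obstacle lies in the sharpening required for part (i): Lemma \ref{intpinfty}(ii) yields only the weak-type bound on the $K$-functional, and one must upgrade this to an $L^p$-integrable bound by tracking the distribution function of $f$ across dyadic levels $\lambda$. This is the bookkeeping step where the nondoubling character of $m$ is most delicate, but all the relevant geometric machinery (admissibility of trapezoids, the decomposition--expansion algorithms, the Calder\'on--Zygmund decomposition at level $\lambda$) is already in place, which is why the argument will ultimately parallel the one in \cite{V}.
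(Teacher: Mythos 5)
Your overall template is the one the paper itself follows: the paper gives essentially no details, stating only that the results are obtained by mimicking \cite[Section 5]{V} starting from Lemma~\ref{intpinfty}, and your plan --- hard inclusion in (i) from the Calder\'on--Zygmund decomposition, easy inclusion from $H^1(m)\hookrightarrow L^1(m)$ plus classical interpolation of Lebesgue spaces, (ii) by duality, (iii) by reiteration --- is exactly that template. One genuine difference: to upgrade the weak-type conclusion $L^p\hookrightarrow\big[H^1(m),L^{p_1}(m)\big]_{\theta,\infty}$ of Lemma~\ref{intpinfty}(ii2) to second parameter $p$, the route in \cite{V} is the reiteration theorem (embed $L^{p_0}$ and $L^{p_2}$, $p_0<p<p_2$, into $\big[H^1(m),L^{p_1}(m)\big]_{\theta_i,\infty}$ via (ii2) and reiterate using $\big[L^{p_0}(m),L^{p_2}(m)\big]_{\eta,p}=L^p(m)$), whereas you propose a direct evaluation of $\int_0^\infty\big(t^{-\theta}K(t,f)\big)^p\,\frac{dt}{t}$ with $\lambda=\lambda(t)$. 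This can be made to work, but the refined bound $\sum_i m(R_i)\lesssim\lambda^{-p}\int_{\{|f|>\lambda/2\}}|f|^p\,dm$ you rely on is not recorded in Lemma~\ref{intpinfty}; it does follow from the stopping-time construction (split $\int_{\cup_i R_i}|f|^p\,dm$ over $\{|f|\le\lambda/2\}$ and its complement and absorb the first term), so you must supply it. The reiteration route avoids this bookkeeping entirely.

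There is one step that, as written, is not a proof: the endpoint $q_1=1$ of (ii). Density of $L^1(m)\cap L^q(m)$ in $L^q(m)$ does not yield $\big[L^{1}(m),BMO(m)\big]_{\theta,q}=L^q(m)$, and the duality argument is unavailable there because it would require $p_1=\infty$ and $(L^\infty)^*\ne L^1$. This endpoint must be handled the same way you handle (iii), i.e.\ by Wolff's reiteration theorem with two intermediate Lebesgue spaces $1<r<s<\infty$, combining the classical identity $\big[L^1(m),L^s(m)\big]_{\cdot,r}=L^r(m)$ with the already-established case $\big[L^r(m),BMO(m)\big]_{\cdot,s}=L^s(m)$ (this is also how the paper treats the analogous endpoint for the complex method). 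With these two points repaired, the proposal is sound and matches the intended argument.
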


\subsection{Sharp maximal function}
In this section we introduce the sharp maximal function of a function $f: V \to \mathbb{C}$, defined by
\begin{align*}
    M^{\#}f(x)= \sup_{R \ni x} \frac{1}{m(R)}\int_{ R}|f-f_R|\  dm,
\end{align*} where the supremum is taken over all $R \in  \mathcal{R}$ such that $x \in R.$ The sharp maximal function is a useful tool to capture the local behaviour of the mean oscillation of a function. Obviously, we have $\|M^{\#}f\|_\infty=\|f\|_{BMO_1}$ and $M^{\#}f \le 2Mf$ pointwise. By the boundedness of the Hardy--Littlewood maximal function, we easily conclude that, for all $p \in (1,+\infty]$,
\begin{align}\label{SMF}
    \|M^{\#}(f)\|_p \le M_p \|f\|_p, \qquad{\forall f \in L^p(m)},
\end{align}
for some $M_p$ depending only on $p$. \\ 

Now we prove the existence of a dyadic family of partitions of the set of vertices of the tree consisting of admissible trapezoids, by which we will obtain the converse  inequality to \eqref{SMF}. We remark that, in a certain  sense, such a family is the analogue of the classical family of Euclidean  dyadic cubes. Indeed, we shall prove that they share similar properties of set inclusion and of measure comparability. The strategy to obtain the dyadic sets  is based on the decomposition and expansion algorithms. \begin{thm}\label{dyadic}
There exists a family $\lbrace \mathcal{D}_j\rbrace_{j\in \mathbb{Z}}$ of partitions of $V$ consisting of admissible trapezoids such that
\begin{itemize}
    \item[(i)] $R\subset R'$ or $R\cap R'=\emptyset$ whenever $R\in \mathcal{D}_j, R'\in \mathcal{D}_k$, $k>j$.
    \item[(ii)] For any $j\in \mathbb{Z}$ and $R\in \mathcal{D}_j$, there exists a unique $R'\in \mathcal{D}_{j+1}$ such that $R \subset R'$ and $m(R')\leq \widetilde C m(R)$. 
    \item[(iii)] For every $j\in \mathbb{Z}$, $R\in \mathcal{D}_j$ can be written as the disjoint union of at most $c$ elements of $\mathcal{D}_{j-1}$, where $c$ is the constant in \eqref{control sons}.
    \item[(iv)] For all $x\in V$ there exists $k=k(x)\in \mathbb{Z}$ such that $x\in \mathcal{D}_j$ for all $j\leq k$.
\end{itemize}
\end{thm}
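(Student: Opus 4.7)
My plan is to build the family $\{\mathcal{D}_j\}_{j\in\mathbb{Z}}$ by induction on $j$, reversing the decomposition algorithm and anchoring the construction to an exhausting chain of admissible trapezoids. First I would set $\mathcal{D}_j := \{\{x\}:x\in V\}$ for every $j\leq 0$. This trivially yields a partition of $V$ into admissible trapezoids and takes care of $(iv)$ with $k(x)=0$ for every $x$. Properties $(i)$, $(ii)$, $(iii)$ are immediate for non-positive indices.

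For $j\geq 1$ I would construct $\mathcal{D}_j$ recursively from $\mathcal{D}_{j-1}$ by grouping elements of $\mathcal{D}_{j-1}$ into admissible parents according to the rule that each $R'\in \mathcal{D}_j$ is an admissible trapezoid whose one-step decomposition $\mathcal{F}(R',1)$ is precisely the collection of $R\in \mathcal{D}_{j-1}$ merged into it. To make this parent assignment globally consistent, so that the new collection is again a partition of $V$, I would fix once and for all a nested exhausting chain $\{R_n^\star\}_{n\geq 0}$ of admissible trapezoids provided by Lemma \ref{TR}, with $R_0^\star=\{x_0\}$ and $R_{n+1}^\star$ the expansion of $R_n^\star$, and use it as an anchor: declare $R_j^\star\in \mathcal{D}_j$ for every $j\geq 0$, and on each annular layer $A_n:=R_{n+1}^\star\setminus R_n^\star$ run the same recursive construction at the appropriate scale. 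A case-by-case inspection of the five expansion rules (singleton expansion, $R_1^2\to R_1^3$, horizontal, vertical-down, vertical-up) shows that $A_n$ is always a disjoint union of at most $c-1$ admissible trapezoids, so the total number of pieces merged into $R_{j+1}^\star$ remains $\leq c$ and the recursion closes up.

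Once the construction is in place, the four properties follow quickly. Property $(iv)$ is immediate from the base case. Property $(iii)$ is tautological, since $R'\in\mathcal{D}_j$ is by definition the disjoint union of the at most $c$ elements of $\mathcal{F}(R',1)\subset\mathcal{D}_{j-1}$. Property $(ii)$ is the content of Proposition \ref{costanti}, which gives $m(R')\leq \widetilde C\,m(R)$ whenever $R\in\mathcal{F}(R',1)$. Property $(i)$ is a straightforward induction on $k-j$: each element of $\mathcal{D}_{j-1}$ sits in a unique element of $\mathcal{D}_j$, so the sequence of partitions forms a refinement chain in which two sets at different levels are either nested or disjoint. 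The main obstacle I foresee is the consistency of the parent assignment used in the inductive step: the expansion algorithm is not injective on $\mathcal{R}$ (for instance both $\{x\}$ and $R_1^2(x)$ sit inside $R_1^3(p(x))$, but they have different dyadic expansions), so the merging rule must break ties carefully. The anchoring chain $\{R_n^\star\}$, together with a fixed choice of vertical-up versus vertical-down expansion at each level, is designed precisely to resolve this ambiguity and to produce a canonical partition at every scale.
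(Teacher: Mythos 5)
Your base case ($\mathcal{D}_j$ consisting of all singletons for $j\leq 0$) is fine, and indeed simpler than what the paper does for negative indices, but the inductive step for $j\geq 1$ contains a genuine gap. You require simultaneously that $R_j^{\star}\in\mathcal{D}_j$ for every $j$ (the anchor chain from Lemma \ref{TR}, built by the \emph{expansion} algorithm) and that the children in $\mathcal{D}_{j-1}$ of each $R'\in\mathcal{D}_j$ be exactly $\mathcal{F}(R',1)$. These two requirements are incompatible, because expansion is not a right inverse of decomposition: in general $R\notin\mathcal{F}(R'',1)$ when $R''$ is the expansion of $R$. Concretely, $R_1^2(x)=s(x)$ expands to $R''=R_1^3(p(x))$, whereas $\mathcal{F}(R'',1)=\{R_1^2(p(x)),R_2^3(p(x))\}$; if $q(p(x))\geq 2$ neither of these sets equals $s(x)$ and both meet it, so a partition $\mathcal{D}_{j-1}$ cannot contain $s(x)$ together with $\mathcal{F}(R'',1)$. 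Already at the step from $\mathcal{D}_1$ to $\mathcal{D}_2$ your rule forces $R_2^3(p(p(x_0)))$ into $\mathcal{D}_1$ while also demanding $R_1^{\star}=s(p(x_0))\in\mathcal{D}_1$. The same obstruction recurs at infinitely many later steps (e.g., after a horizontal expansion whose output is cut horizontally rather than vertically by the decomposition algorithm), so this is not a tie-breaking ambiguity that the anchor chain can resolve; it is a structural mismatch between the two algorithms. Consequently properties (ii) and (iii), which you declare tautological from $\mathcal{F}(R',1)\subset\mathcal{D}_{j-1}$, are not established.

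The paper avoids this by not using the decomposition algorithm to define the coarse partitions at all: for $j\geq 0$ it takes $\mathcal{D}_j$ to be the family of \emph{brothers} of $R_j$ (all trapezoids with the same parameters $h'(j),h''(j)$ and root on the same level) together with the brothers of the annuli $R_\ell\setminus R_{\ell-1}$ for $\ell>j$. With this definition the children of an element of $\mathcal{D}_j$ inside $\mathcal{D}_{j-1}$ are read off from the expansion rules (a brother of $R_j$ splits into at most $c$ brothers of $R_{j-1}$ after a horizontal expansion, or into a brother of $R_{j-1}$ plus a brother of the annulus after a vertical one), and Proposition \ref{costanti} still yields the measure comparison in (ii). The decomposition algorithm is used only to refine $\mathcal{D}_0$ into the partitions with negative index, which is where your all-singleton choice would also have worked. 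To salvage your approach you would have to replace ``the children of $R'$ are $\mathcal{F}(R',1)$'' by ``the children of $R'$ are some fixed disjoint family of at most $c$ admissible trapezoids of comparable measure, chosen compatibly with the expansion chain,'' which is essentially the paper's construction.
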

\begin{proof}
{Let $\lbrace R_j\rbrace$ be the family of trapezoids described in Lemma \ref{TR}.}
 For each $j\geq 0$, $R_j$ can be used as a base set to produce a partition of $V$. Let $h'(j), h''(j)$ be the parameters defining $R_j$. Given a trapezoid $R$, we write $\mathcal{B}(R)$ for the \textit{brothers} of $R$, the family of trapezoids of parameters $h'(j), h''(j)$ and root of the same level as $x_j$, the root of $R_j$. A partition of the strip of vertices $\lbrace x\in V: \ \ell(x_j)-h''(j)< \ell(x)\leq \ell(x_j)-h'(j)\rbrace$ is naturally given by $\mathcal{B}(R_j)$. Let $\mathcal{L}$ be the set of indices such that $R_\ell$ is obtained by vertical expansion of $R_{\ell-1}$ when $\ell\in \mathcal{L}$. It is easily seen that $R_\ell\setminus R_{\ell-1}$ is still an admissible trapezoid, and consequently are all its brothers. For $j\in \mathbb{N}$ we set $\mathcal{D}_j$ to be the collection of all trapezoids $R$ belonging to $\mathcal{B}(R_j)$ or to $\mathcal{B}(R_\ell\setminus R_{\ell-1})$ for some $\ell>j$. Then it is clear that $\mathcal{D}_j$ defines a partition of $V$.
For $j<0$, we define $\mathcal{D}_j$ to be the family of trapezoids obtained by applying one step of the decomposition algorithm to each trapezoid $R\in \mathcal{D}_{j-1}$. Then the family of partitions $\lbrace \mathcal{D}_j\rbrace_{j\in \mathbb{Z}}$ satisfies all the desired properties: $(i)$ and $(ii)$ follow from the rules defining the expansion algorithm, $(iii)$ and $(iv)$ from the ones of the decomposition algorithm and Corollary \ref{corollario bounded degree}.
\end{proof}
We set $\mathcal{D}= \cup_{j \in \mathbb{Z}} \mathcal{D}_j $ and we define the maximal dyadic function of $f :V\to \mathbb{C}$ as
\begin{align*}
    M_{\mathcal{D}}f(x)= \sup_{R\ni x} \frac{1}{m(R)}\int_{ R} |f| \ dm,
\end{align*}
where the supremum is taken over all $R \in \mathcal{D}$ such that $x \in R.$ We remark that $M_\mathcal{D}(f) \le M(f)$ pointwise on $V$, thus $M_\mathcal{D}$ is of  weak-type (1,1). In this section  we shall prove that the functions $Mf$, $M^{\#}f$,  $M_{\mathcal{D}}f$ and $f$ are comparable in the $L^p$ norm. 

In the proof of the next theorem we follow a standard argument, see for example \cite[Th. 7.4.4.]{gra} for the correspondent result in the Euclidean setting. 
\begin{thm}\label{ge} For all $\gamma>0$, $\lambda>0$ and $f :V\to \mathbb{C}$, it holds
\begin{align*}
    m(\{ x \in V : M_{\mathcal{D}} (f)(x)> 2\lambda, M^{\#}(f)(x)<\gamma\lambda \}) \le C'\gamma m(\{x \in V : M_{\mathcal{D}}(f)(x)>\lambda\}),
\end{align*}
where $C'=2\beta \widetilde C$.
\end{thm}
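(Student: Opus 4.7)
The plan is to carry out a good-$\lambda$ argument adapted to the dyadic structure, with the family $\mathcal{D}$ from Theorem \ref{dyadic} playing the role of classical dyadic cubes. Assuming without loss of generality that $m(\{M_{\mathcal{D}}f>\lambda\})<\infty$ (otherwise the inequality is trivial), I would first decompose this level set as a disjoint union $\bigsqcup_j R_j$, where the $R_j\in\mathcal{D}$ are the maximal dyadic trapezoids with $\frac{1}{m(R_j)}\int_{R_j}|f|\,dm>\lambda$; such maximal trapezoids exist because of the finite-measure assumption together with the nesting property Theorem \ref{dyadic}$(i)$. For each $R_j$, I would denote by $R_j'$ its dyadic parent, which by Theorem \ref{dyadic}$(ii)$ satisfies $m(R_j')\le \widetilde{C}\,m(R_j)$ and, by maximality of $R_j$, has average of $|f|$ at most $\lambda$; in particular $|f_{R_j'}|\le\lambda$.

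The main step is to show that for every $R_j$,
\begin{equation*}
m\bigl(\{x\in R_j:M_{\mathcal{D}}f(x)>2\lambda,\ M^{\#}f(x)<\gamma\lambda\}\bigr)\le 2\beta\widetilde{C}\gamma\, m(R_j),
\end{equation*}
after which summing over $j$ yields the theorem. To prove this I may assume the set on the left is nonempty; pick some $y\in R_j$ with $M^{\#}f(y)<\gamma\lambda$. Since $y\in R_j'\in\mathcal{R}$, the very definition of $M^{\#}$ gives $\int_{R_j'}|f-f_{R_j'}|\,dm<\gamma\lambda\,m(R_j')$. Next, for any $x\in R_j$ with $M_{\mathcal{D}}f(x)>2\lambda$, the dyadic trapezoid $Q\ni x$ realizing this bound must satisfy $Q\subseteq R_j\subseteq R_j'$ by maximality of $R_j$ and the nesting in Theorem \ref{dyadic}$(i)$. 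Setting $g=(f-f_{R_j'})\chi_{R_j'}$, the estimate
\begin{equation*}
\frac{1}{m(Q)}\int_Q|g|\,dm\ge \frac{1}{m(Q)}\int_Q|f|\,dm-|f_{R_j'}|>2\lambda-\lambda=\lambda
\end{equation*}
gives $M_{\mathcal{D}}g(x)>\lambda$. The weak-type $(1,1)$ bound for $M_{\mathcal{D}}$ (inherited from the one for $M$ with constant $2\beta$ proved earlier, since $M_{\mathcal{D}}\le M$ pointwise) applied to $g$ then yields $m(\{M_{\mathcal{D}}g>\lambda\})\le \frac{2\beta}{\lambda}\|g\|_1\le 2\beta\widetilde{C}\gamma\, m(R_j)$, which is the desired per-$R_j$ estimate.

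The main obstacle I anticipate is the clean setup of the maximal dyadic decomposition of the level set rather than the estimates themselves, since without the finite-measure assumption one has to argue carefully about the existence of maximal $R_j$'s at each point; once this technical point is settled, the remainder is mostly routine. It is worth noting that the constant $\widetilde{C}$ enters only through the comparison $m(R_j')\le\widetilde{C}\,m(R_j)$ between a dyadic trapezoid and its parent, while $2\beta$ comes purely from the weak-type $(1,1)$ inequality for the Hardy--Littlewood maximal function established in the previous section, which explains the shape of the constant $C'=2\beta\widetilde{C}$.
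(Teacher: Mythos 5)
Your proposal is correct and follows essentially the same good-$\lambda$ argument as the paper: maximal dyadic decomposition of $\{M_{\mathcal D}f>\lambda\}$, comparison with the dyadic parent $R'_j$ via $m(R'_j)\le\widetilde C\,m(R_j)$ and $|f_{R'_j}|\le\lambda$, and the weak-type $(1,1)$ bound for $M_{\mathcal D}$ applied to $f-f_{R'_j}$. The only (immaterial) difference is that you truncate the oscillation to $R'_j$ rather than to $R_j$ before applying the weak-type inequality.
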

\begin{proof}
We can assume that $\Omega_{\lambda}=\{x \in V : M_{\mathcal{D}}(f)(x)>\lambda\}$ has finite measure. Hence for all $x \in \Omega_\lambda$, there exists $R_x \in \mathcal{D}$ that is maximal with respect to set inclusion, such that $x \in R_x$ and 
\begin{align*}
    \frac{1}{m(R_x)}\int_{R_x}|f|\ dm > \lambda.
\end{align*}
Notice that if $y \in R_x$, then $R_x=R_y$ because maximal trapezoids are disjoint. Hence it is sufficient to show that for a given $R_x$
\begin{align}\label{suff}
    m(\{ y \in R_x : M_{\mathcal{D}} f(y)> 2\lambda, M^{\#}f(y)<\gamma\lambda \}) \le C'\gamma m(R_x).
\end{align}
Fix $x$ and $y \in R_x$ such that $M_\mathcal{D}(f)(y)> 2 \lambda$, then the supremum
\begin{align*}
    \sup_{ R \ni y} \frac{1}{m(R)} \int_R |f|\ dm
\end{align*}
is taken over all the dyadic trapezoids $R$ which contain $R_x$ or are contained in $R_x$. If $Q$ strictly contains $R_x$, then by the maximality of $R_x$, we get
\begin{align*}
    \frac{1}{m(Q)} \int_Q |f| \ dm\le \lambda.  
\end{align*}
Thus 
\begin{align*}
    M_{\mathcal{D}}f(x)= M_\mathcal{D}(f \chi_{R_x})(x).
\end{align*}
Let $R'_x$ be the dyadic set of minimal scale such that $R_x \subsetneq R'_x$.  It follows
\begin{align*}
 M_{\mathcal{D}}\bigg((f-f_{R'_x})\chi_{R_x})\bigg)(x) >M_{\mathcal{D}}(f\chi_{R_x})(x)- |f_{R_x'}| > 2\lambda - \lambda= \lambda. 
\end{align*}
We conclude that
\begin{align}\label{espressione}
    m(\{y \in R_x : M_{\mathcal{D}} f(y)> 2\lambda\}) \le m(\{y \in R_x : M_{\mathcal{D}}\bigg((f-f_{R'_x})\chi_{R_x})\bigg)(y) >\lambda\}).
\end{align}
We know that $M_{\mathcal{D}}$ is of weak type (1,1), thus we control the last expression in \eqref{espressione} with
\begin{align}\label{fine}
    \frac{2\beta}{\lambda} \int_{R_x} |f-f_{R'_x}|\ dm \le \frac{2\widetilde C\beta}{\lambda} \frac{m(R_x)}{m(R'_x)} \int_{R'_x} |f-f_{R'_x}| \ dm \le \frac{2\widetilde C\beta}{\lambda}m(R_x)M^{\#}(f)(\xi_x),
\end{align}
where $\xi_x \in R_x.$ We can assume that for some $\xi_x \in R_x$ it holds
\begin{align*}
M^{\#}(f)(\xi_x) \le \gamma \lambda,
\end{align*} otherwise there is nothing to prove. This, together with \eqref{suff}, \eqref{espressione} and \eqref{fine} conclude the proof. 
\end{proof}
Now we prove an inequality involving the $L^p$ norm of a function,  the dyadic and the sharp maximal functions. 
\begin{thm}\label{INT} Let $0<p_0<+\infty.$ Then, for all $p$ such that $p_0 \le p < + \infty$, there exists a constant $N_p$ such that, for all  $f$ with $M_\mathcal{D}(f) \in L^{p_0}(m)$, we have
\begin{itemize}
\item[(i)] $    \|M_{\mathcal{D}}(f)\|_p \le N_p \| M^{\#}(f)\|_p$;
\item[(ii)]    $\|f\|_p \le N_p \| M^{\#}(f)\|_p$. 
\end{itemize}
\end{thm}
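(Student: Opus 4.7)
The plan is to deduce this from the good-$\lambda$ inequality of Theorem \ref{ge} via the standard layer-cake argument. First I would observe that $(ii)$ is an immediate consequence of $(i)$: property $(iv)$ in Theorem \ref{dyadic} ensures that every singleton $\{x\}$ eventually appears in some partition $\mathcal{D}_j$, so testing the dyadic maximal function against such a singleton gives $|f(x)| \le M_{\mathcal D}(f)(x)$ pointwise on $V$. Hence $\|f\|_p \le \|M_{\mathcal D}(f)\|_p$ and $(ii)$ follows from $(i)$.

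For $(i)$, set
$$I_N = p\int_0^N \lambda^{p-1} m(\{M_{\mathcal D}(f) > \lambda\}) \, d\lambda = \int_V \min(M_{\mathcal D}(f), N)^p \, dm.$$
Using $\min(M_{\mathcal D}(f),N)^p \le N^{p-p_0} M_{\mathcal D}(f)^{p_0}$ and the hypothesis $M_{\mathcal D}(f)\in L^{p_0}(m)$, we see that $I_N \le N^{p-p_0}\|M_{\mathcal D}(f)\|_{p_0}^{p_0} < \infty$ for every $N>0$, which is the crucial finiteness that will allow the absorption step below.

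Next I would apply Theorem \ref{ge}. The change of variable $\mu = 2\lambda$ gives
$$p\int_0^N \lambda^{p-1} m(\{M_{\mathcal D}(f) > 2\lambda\}) \, d\lambda = \frac{1}{2^p} I_{2N},$$
while the good-$\lambda$ estimate yields, for any $\gamma>0$,
$$m(\{M_{\mathcal D}(f)>2\lambda\}) \le C'\gamma\, m(\{M_{\mathcal D}(f)>\lambda\}) + m(\{M^\#(f)>\gamma\lambda\}).$$
Integrating against $p\lambda^{p-1}\, d\lambda$ on $(0,N)$, using $I_N \le I_{2N}$, and extending the second integral to $(0,\infty)$, we obtain
$$\frac{1}{2^p} I_N \le \frac{1}{2^p} I_{2N} \le C'\gamma\, I_N + \gamma^{-p} \|M^\#(f)\|_p^p.$$
Choosing $\gamma$ with $C'\gamma = 2^{-(p+1)}$ allows absorbing $C'\gamma\, I_N$ into the left-hand side (this is where the finiteness of $I_N$ is essential), yielding
$$I_N \le 2^{p+1} \gamma^{-p} \|M^\#(f)\|_p^p.$$
Letting $N\to\infty$ and invoking the monotone convergence theorem gives the desired bound $\|M_{\mathcal D}(f)\|_p \le N_p \|M^\#(f)\|_p$ with $N_p = \bigl(2^{p+1}(2^{p+1}C')^p\bigr)^{1/p}$.

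The main subtlety, and where the hypothesis $M_{\mathcal D}(f)\in L^{p_0}(m)$ is genuinely used, is the absorption step: without a priori control that $I_N$ is finite, one cannot subtract $C'\gamma I_N$ from both sides. Everything else is formal manipulation of the good-$\lambda$ inequality combined with the pointwise bound $|f|\le M_{\mathcal D}(f)$ coming from Theorem \ref{dyadic}$(iv)$.
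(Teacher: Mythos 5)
Your proof is correct and is essentially the paper's own argument: the authors simply cite the standard good-$\lambda$/layer-cake proof of \cite[Th.\ 7.4.5]{gra} for $(i)$ and deduce $(ii)$ from the same pointwise bound $|f|\le M_{\mathcal D}(f)$ that you use. You have merely written out the details (truncation at level $N$, finiteness of $I_N$ from the $L^{p_0}$ hypothesis, absorption, monotone convergence) that the paper omits.
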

\begin{proof}
To prove $(i)$ it is possible to reply step by step \cite[Th. 7.4.5.]{gra}, so we omit the details. Inequality $(ii)$ follows from $(i)$ and the pointwise estimate $|f| \le M_\mathcal{D}(f)$.
\end{proof}

\subsection{Complex interpolation}

We study the complex interpolation of $H^1(m)$, $BMO(m)$ and the $L^p(m)$ spaces.  \\
Given two compatible normed spaces $A_0$ and $A_1$, for any $\theta\in (0,1)$ we denote by $\big(A_0,A_1\big)_{[\theta]}$ the complex interpolation space obtained via Calder\'on's complex interpolation method (see \cite{BL} for the details). More precisely, we denote by $\Sigma$ the strip $\{z\in\mathbb C: \Re z\in (0,1)\}$ and denote by $\overline{\Sigma}$ its closure. We consider the class $\mathcal F(A_0,A_1)$ of all functions $F:\overline{\Sigma}\rightarrow A_0+A_1$ such that the map $z\mapsto \langle F(z),\ell \rangle$ is continuous and bounded in $\overline{\Sigma}$ and analytic in $\Sigma$ for every $\ell$ in the dual of $A_0+A_1$, $F(it)$ is bounded on $A_0$ and $F(1+it)$ is bounded on $A_1$ for every $t\in\mathbb R$. We endow $\mathcal F(A_0,A_1)$ with the norm 
$$
\|F\|_{\mathcal F}=\sup\{\max(\|F(it)\|_{A_0}, \|F(1+it)\|_{A_1}):t\in\mathbb R \}\,.
$$
The space $\big(A_0,A_1\big)_{[\theta]}$ consists of all $f\in A_0+A_1$ such that  $f=F(\theta)$ for some $F\in \mathcal F(A_0,A_1)$ endowed with the norm
$$
\|f\|_{[\theta]}=\inf \{ \|F\|_{\mathcal F}: F\in \mathcal F(A_0,A_1), F(\theta)=f\}.
$$

\begin{thm}\label{t: intcomp}
 Suppose that $\theta\in (0,1)$, $1<q_1<q<\infty$, $1<p<p_1<\infty$, $\frac{1}{q}=\frac{1-\theta}{q_1}$ and $\frac{1}{p}=1-{\theta}+\frac{\theta}{p_1}$. Then the following hold:
 \begin{itemize} 
 \item[(i)] $\big(L^{q_1}(m),BMO(m)\big)_{[\theta]}=L^q(m)$;
 \item[(ii)] $\big(H^1(m), L^{p_1}(m)\big)_{[\theta]}=L^p(m)$.
 \end{itemize}
 \end{thm}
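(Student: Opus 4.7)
The plan is to follow the strategy introduced in \cite{FS} and extended to nondoubling contexts in \cite{V,CMM}, exploiting the tools developed above: the atomic structure of $H^1(m)$, the duality with $BMO(m)$, the John--Nirenberg inequality (Theorem \ref{JN}) and the sharp maximal function estimates of Theorem \ref{INT}. Each identity splits into two inclusions; since an abstract duality argument will relate part (i) to part (ii), it will suffice to establish one direct inclusion in each.

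For (i), the direct inclusion $L^q(m) \subset \bigl(L^{q_1}(m), BMO(m)\bigr)_{[\theta]}$ follows from the classical explicit analytic family
\[
F(z) = \|f\|_q\,\sgn(f)\,\bigl(|f|/\|f\|_q\bigr)^{q(1-z)/q_1},
\]
defined for simple $f \in L^q(m)$. The hypothesis $\tfrac{1}{q}=\tfrac{1-\theta}{q_1}$ yields $F(\theta) = f$, while a direct computation gives $\|F(it)\|_{q_1}=\|f\|_q$ and $\|F(1+it)\|_\infty \le \|f\|_q$; the trivial embedding $L^\infty(m) \hookrightarrow BMO(m)$ then produces $\|f\|_{[\theta]} \lesssim \|f\|_q$, and density extends the conclusion to the whole of $L^q(m)$.

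The direct inclusion $L^p(m) \subset \bigl(H^1(m), L^{p_1}(m)\bigr)_{[\theta]}$ in (ii) is considerably more delicate and will, I expect, be the main obstacle of the proof. My plan is to construct an analytic family $F(z)$ by applying the Calder\'on--Zygmund type decomposition of Lemma \ref{intpinfty} in a $z$-dependent fashion. Concretely, for $f$ in a dense subspace of $L^p(m)$ I would fix a discrete sequence of levels and assemble the corresponding bad parts of the decomposition, each proportional to a $(1,\infty)$-atom supported on an admissible trapezoid $R_i$, with $z$-dependent analytic weights chosen so that $F(\theta)=f$, the $\Re z=0$ boundary is a convergent atomic series in $H^1(m)$, and the $\Re z=1$ boundary is uniformly controlled in $L^{p_1}(m)$. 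The choice of the analytic weights, which must simultaneously preserve atomic cancellation at $\Re z=0$ and $L^{p_1}$-size at $\Re z=1$, will crucially rely on the geometry of admissible trapezoids and Proposition \ref{costanti}.

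Finally, the converse inclusions $\bigl(L^{q_1}(m), BMO(m)\bigr)_{[\theta]} \subset L^q(m)$ and $\bigl(H^1(m), L^{p_1}(m)\bigr)_{[\theta]} \subset L^p(m)$ would be obtained by duality: using the identification $(H^1(m))^* = BMO(m)$ established earlier together with the abstract relation $(A_0, A_1)^*_{[\theta]} = (A_0^*, A_1^*)_{[\theta]}$ (see \cite{BL}), the direct inclusion in (ii) implies the converse of (i), and the direct inclusion in (i) implies the converse of (ii). The hypotheses for this duality formula hold in the present setting modulo standard density checks for $H^1(m) \cap L^{p_1}(m)$ and $L^{q_1}(m) \cap L^\infty(m)$.
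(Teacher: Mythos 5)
Your treatment of the easy inclusion $L^q(m)\subset\bigl(L^{q_1}(m),BMO(m)\bigr)_{[\theta]}$ is correct and is exactly what the paper does (it simply cites $L^q=(L^{q_1},L^\infty)_{[\theta]}$ together with $L^\infty\hookrightarrow BMO$). The rest of your plan, however, has a genuine gap and a flawed duality scheme. The paper's key idea for the hard inclusion $\bigl(L^{q_1}(m),BMO(m)\bigr)_{[\theta]}\subset L^q(m)$ is to linearize the sharp maximal function: for each choice function $\phi$ assigning to $x$ an admissible trapezoid containing it and each unimodular $\eta$, the operator $S^{\phi,\eta}f(x)=\frac{1}{m(\phi(x))}\int_{\phi(x)}[f(y)-f_{\phi(x)}]\eta(x,y)\,dm(y)$ is bounded on $L^{q_1}(m)$ (via the Hardy--Littlewood maximal function) and from $BMO(m)$ to $L^\infty(m)$; applying $S^{\phi,\eta}$ to an analytic family $F$ with $F(\theta)=f$, interpolating, taking the supremum over $\phi,\eta$ to recover $M^{\sharp}f$, and invoking Theorem \ref{INT} yields $\|f\|_q\lesssim\|M^{\sharp}f\|_q\lesssim\|f\|_{[\theta]}$. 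You mention Theorem \ref{INT} in your preamble but never actually use it; instead you propose to deduce the converse of (i) from the direct inclusion of (ii), and the direct inclusion of (ii) is only an announced construction (``my plan is\dots I expect\dots'') whose hardest step --- the $z$-dependent atomic weights preserving cancellation at $\Re z=0$ and $L^{p_1}$-size at $\Re z=1$ --- is left entirely unspecified. As written, the hardest part of the theorem rests on an unproven sketch.

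There is also a structural problem with your duality bookkeeping. Deducing the converse of (ii) from the direct inclusion of (i) would require dualizing the couple $(L^{q_1}(m),BMO(m))$, but the hypothesis of \cite[Cor.~4.5.2]{BL} fails there: $L^{q_1}(m)\cap BMO(m)$ is not dense in $BMO(m)$, and in any case $(BMO(m))^*$ is not $H^1(m)$ (the duality proved in Section \ref{s: BMOHardy} goes one way only, $(H^1)^*=BMO$). The workable direction --- and the one the paper takes --- is the opposite: establish \emph{both} inclusions of (i) first (direct inclusion trivially, reverse inclusion via $S^{\phi,\eta}$ and Theorem \ref{INT}), and then obtain (ii) by applying the duality theorem to the couple $(H^1(m),L^{p_1}(m))$, for which the density of the intersection can be checked. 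You should therefore replace the speculative analytic-family construction for (ii) by this duality argument, and supply the sharp-maximal-function proof of the reverse inclusion in (i).
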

 \begin{proof}
 We first prove $(i)$. The inclusion $L^q(m)\subset \big(L^{q_1}(m),BMO(m)\big)_{[\theta]}$ follows from the fact that $L^{\infty}(m)$ is continuously included in $BMO(m)$ and $L^q(m)=\big(L^{q_1}(m),L^{\infty}(m)\big)_{[\theta]}$. To prove the reverse inclusion we consider any function $\phi:V\rightarrow \mathcal R$ which associates to every vertex $x\in V$ an admissible trapezoid $R$ which contains $x$ and any function $\eta:V\times V\rightarrow \mathbb C$ such that $|\eta(x,y)|=1$ for every $(x,y)\in V\times V$. Define the linear operator $S^{\phi,\eta}$ which on every function $f:V\rightarrow \mathbb C$ is defined as follows 
 $$
 S^{\phi,\eta}f(x)=\frac{1}{m(\phi(x))}\int_{\phi(x)}[f(y)-f_{\phi(x)}]\eta(x,y)dm(y),\qquad \forall x\in V\,.
 $$
 Clearly, 
 $$
 |S^{\phi,\eta}f|\leq M^{\sharp}(f),\qquad { \text{and}}\qquad \sup_{\phi, \eta}|S^{\phi,\eta}f|=M^{\sharp}(f)\,.
 $$
Given $f\in \big(L^{q_1}(m),BMO(m)\big)_{[\theta]}$ there exists $F\in \mathcal F(L^{q_1}(m),BMO(m))$ such that $F(\theta)=f$. For every $t\in\mathbb R$ we have
$$
\|S^{\phi,\eta}F(it)\|_{q_1}\leq \|M^{\sharp}(F(it))\|_{q_1}\lesssim \|MF(it)\|_{q_1}\lesssim\|F(it)\|_{q_1}\,,
$$
where $M$ is the Hardy--Littlewood maximal function associated with $\mathcal R$ which is bounded on $L^{q_1}(m)$. We also have that for every $t\in\mathbb R$
$$
\|S^{\phi,\eta}F(1+it)\|_{\infty}\leq \|M^{\sharp}(F(1+it)) \|_{\infty}\leq  \|F(1+it)\|_{BMO} \,.
$$
It follows that $S^{\phi,\eta}F\in \mathcal F(L^{q_1}(m),BMO(m))$ and $\|S^{\phi,\eta}F\|_{\mathcal F}\lesssim \|F\|_{\mathcal F}$. Hence 
$$
\|S^{\phi,\eta}F(\theta)\|_{q}\lesssim\|F(\theta)\|_{(L^{q_1}(m), BMO(m))_{[\theta]}}= \|f\|_{(L^{q_1}(m), BMO(m))_{[\theta]}}\,.
$$
By taking the supremum over all $\phi$ and $\eta$ and applying Theorem \ref{INT} we get
$$
\|f\|_q\lesssim\|M^{\sharp} (f)\|_q\lesssim\|f\|_{(L^{q_1}(m), BMO(m))_{[\theta]}}\,.
$$
This proves the inclusion $\big(L^{q_1}(m),BMO(m)\big)_{[\theta]}\subset L^p(m)$ and concludes the proof of $(i)$. The proof of $(ii)$ follows by the duality between $H^1(m)$ and $BMO(m)$ and \cite[Cor. 4.5.2]{BL}. 
\end{proof}

 \begin{thm}
  Suppose that $\theta\in (0,1)$, $\frac{1}{q}={1-\theta}$. Then the following hold:
 \begin{itemize} 
 \item[(i)] $\big(L^{1}(m),BMO(m)\big)_{[\theta]}=L^q(m)$;
 \item[(ii)] $\big(H^1(m), L^{\infty}(m)\big)_{[\theta]}=L^q(m)$;
 \item[(iii)] $\big(H^1(m), BMO(m)\big)_{[\theta]}=L^q(m)$.
 \end{itemize}
 \end{thm}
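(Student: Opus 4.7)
The plan is to deduce all three identities as consequences of Wolff's reiteration theorem for Calder\'on's complex interpolation method, combined with Theorem \ref{t: intcomp} and the classical complex interpolation of Lebesgue spaces $\big(L^{s_0}(m), L^{s_1}(m)\big)_{[\eta]} = L^s(m)$, where $1/s = (1-\eta)/s_0 + \eta/s_1$. Recall Wolff's theorem: given four compatible Banach spaces $X_1, X_2, X_3, X_4$ such that $X_2 \cap X_3$ is dense in both $X_2$ and $X_3$, if $X_2 = (X_1, X_3)_{[\phi]}$ and $X_3 = (X_2, X_4)_{[\psi]}$ for some $\phi, \psi \in (0,1)$, then
\[
X_2 = (X_1, X_4)_{[\alpha]} \qquad \text{and} \qquad X_3 = (X_1, X_4)_{[\beta]}
\]
with $\alpha = \phi\psi/(1 - \phi + \phi\psi)$ and $\beta = \psi/(1 - \phi + \phi\psi)$.

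For (i), the plan is to fix any $q_1 \in (1, q)$ and set $X_1 = L^1(m)$, $X_2 = L^{q_1}(m)$, $X_3 = L^q(m)$, $X_4 = BMO(m)$. Theorem \ref{t: intcomp} (i) supplies $X_3 = (X_2, X_4)_{[\psi]}$ with $\psi = 1 - q_1/q$, and the classical $L^p$ interpolation supplies $X_2 = (X_1, X_3)_{[\phi]}$ with $\phi = (q_1-1)q/(q_1(q-1))$; a direct computation with Wolff's formula gives $\beta = \psi/(1 - \phi + \phi\psi) = 1 - 1/q = \theta$, hence $L^q(m) = \big(L^1(m), BMO(m)\big)_{[\theta]}$. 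Part (ii) is treated symmetrically: choose any $p_1 > q$ and take $X_1 = H^1(m)$, $X_2 = L^q(m)$, $X_3 = L^{p_1}(m)$, $X_4 = L^\infty(m)$, now invoking Theorem \ref{t: intcomp} (ii) for $X_2 = (X_1, X_3)_{[\phi]}$ and the classical interpolation for $X_3 = (X_2, X_4)_{[\psi]}$; the Wolff parameter $\alpha$ again turns out to equal $1 - 1/q$. Finally, for (iii) the plan is to pick any $p \in (1, q)$ and set $X_1 = H^1(m)$, $X_2 = L^p(m)$, $X_3 = L^q(m)$, $X_4 = BMO(m)$: both inputs to Wolff's theorem are now provided by Theorem \ref{t: intcomp} (parts (ii) and (i), respectively), and yet another direct computation shows that the resulting parameter $\beta$ equals $1 - 1/q = \theta$.

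The main technical point will be the verification of the compatibility and density hypotheses of Wolff's theorem in the present setting. In each of the three applications the four spaces involved embed continuously into a common ambient space (for instance, the space of functions on $V$ that are integrable against any admissible trapezoid's restriction of $m$), so the compatibility is immediate; moreover, the intersection $X_2 \cap X_3$ is in every case an intersection of two $L^p$-type spaces and therefore contains the simple functions supported on finitely many vertices, which are dense in both factors. Beyond this, the remaining work is the purely algebraic check that the Wolff parameters $\alpha$ or $\beta$ coincide with $\theta = 1 - 1/q$ and that the auxiliary exponent $q_1$, $p_1$, or $p$ drops out of the final expression; once this is verified, all three identities follow uniformly.
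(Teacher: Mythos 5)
Your proposal is correct and follows essentially the same route as the paper: both arguments rest on Wolff's reiteration theorem for the complex method, fed with Theorem \ref{t: intcomp} and the classical complex interpolation of the $L^p(m)$ scale, and your parameter computations ($\alpha=\beta=1-1/q$ in each configuration) check out. The only differences are cosmetic — the paper places the auxiliary exponent below $q$ and reads off $L^q(m)$ as the outer Wolff space, and it deduces (iii) from (i), (ii) together with the inclusions $H^1(m)\subset L^1(m)$ and $L^{\infty}(m)\subset BMO(m)$ rather than by a third application of Wolff — though you should make sure the density hypothesis you verify matches the one in the version of Wolff's theorem you invoke (the paper checks density of $A_1\cap A_4$ in the two intermediate spaces, using compactly supported functions with vanishing integral when $H^1(m)$ is an endpoint, rather than density of $A_2\cap A_3$).
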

 \begin{proof}
 Take $r\in (1,q)$ and $\phi\in (0,1)$ such that $\frac1r=1-\phi+\frac{\phi}{q}$. Then $\big(L^1(m),L^q(m)\big)_{[\phi]}=L^r(m)$. Moreover, by Theorem \ref{t: intcomp},
$$\big(L^{r}(m),BMO(m)\big)_{[\gamma]}=L^q(m)$$ if $\frac{1}{q}=\frac{1-\gamma}{r}$. 
Since $L^1(m)\cap BMO(m)$ contains the space of compactly supported functions, it is dense in $L^r(m)$ and $L^q(m)$. Then, by the reiteration theorem \cite[Th. 2]{wolff}, we deduce that $\big(L^{1}(m),BMO(m)\big)_{[\theta]}=L^q(m)$.  
 
Take $r\in (1,q)$ and $\phi\in (0,1)$ such that $\frac1r=1-\phi+\frac{\phi}{q}$. Then by Theorem \ref{t: intcomp}, $\big(H^1(m),L^q(m)\big)_{[\phi]}=L^r(m)$. Moreover, $\big(L^{r}(m),L^{\infty}(m)\big)_{[\gamma]}=L^q(m)$. The space of compactly supported functions with vanishing integral is contained in $H^1(m)\cap L^{\infty}(m)$ and is dense in $L^r(m)$ and $L^q(m)$. Then by the reiteration theorem \cite[Th. 2]{wolff}, we deduce that $\big(H^{1}(m),L^{\infty}(m)\big)_{[\theta]}=L^q(m)$.  
 Property $(iii)$ follows from $(i)$ and $(ii)$ and the fact that $H^1(m)\subset L^1(m)$ and $L^{\infty}(m)\subset BMO(m)$. 
 \end{proof} 
%
%
%
%
%
%
%

\subsection{Integral operators}

We now prove that  integral operators which satisfy a H\"ormander's condition are bounded from  $H^1(m)$ to $L^1(m)$  and from $L^\infty(m)$  to $BMO(m).$ 
\begin{thm}\label{t: intop}
Let $T$ be a linear operator which is bounded on $L^2(m)$ and admits a
kernel $K$.
\begin{itemize}
    \item[(i)] Assume $K$ satisfies the condition
\begin{align}\label{1star}
  \sup_{R \in \mathcal{R}} \sup_{y,z \in R} \int_{(R^*)^c} |K(x,y)-K(x,z)| \ dm(x) < + \infty,
\end{align}
where, for any $R=R_{h'}^{h''}(x) \in \mathcal{R}$, we define $R^{*}=\{x \in V : d(x,R)<h' \}$. Then $T$ extends to a bounded operator from $H^1(m)$ to $L^1(m)$ and on $L^p(m)$, for $1<p<2.$ 
\item[(ii)]
If  K satisfies the condition 
\begin{align}\label{duestar}
     \sup_{R \in \mathcal{R}} \sup_{y,z \in R} \int_{(R^*)^c} |K(y,x)-K(z,x)| \ dm(x) < + \infty,
\end{align}
where $R^*$ is defined as in $(i)$, then $T$ extends to a bounded operator from $L^\infty(m)$ to $BMO(m)$ and on $L^q(m)$, for $2<q<+\infty$.
\end{itemize}
\end{thm}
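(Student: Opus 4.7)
The theorem splits into two boundedness statements per part, with the $L^p$-bounds following from interpolation and duality. I would treat part $(i)$ by a direct atomic argument for $H^1(m)\to L^1(m)$ together with a Calder\'on--Zygmund decomposition argument for the weak type $(1,1)$ bound; the $L^p$-results for $p\in(1,2)$ then follow by Marcinkiewicz interpolation with the $L^2$-bound. For part $(ii)$, since the adjoint $T^*$ has kernel $K^*(x,y)=\overline{K(y,x)}$, hypothesis \eqref{duestar} for $T$ becomes exactly \eqref{1star} for $T^*$, so I would deduce $(ii)$ from $(i)$ applied to $T^*$ by duality.

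For the atomic part of $(i)$, using Proposition \ref{EqH} to realise $H^1(m)=H^{1,2}(m)$, it suffices to show $\|Ta\|_1\le C$ uniformly for every $(1,2)$-atom $a$ supported in some $R\in\mathcal{R}$. I split
\begin{align*}
\|Ta\|_1=\int_{R^*}|Ta|\,dm + \int_{(R^*)^c}|Ta|\,dm.
\end{align*}
For the local term, the Cauchy--Schwarz inequality, the $L^2$-boundedness of $T$, and the size condition on $a$ give
$\int_{R^*}|Ta|\,dm\le m(R^*)^{1/2}\|Ta\|_2\lesssim m(R^*)^{1/2}m(R)^{-1/2}$. For the tail, I exploit the vanishing integral of $a$: for any fixed $z\in R$,
\begin{align*}
\int_{(R^*)^c}|Ta(x)|\,dm(x) \le \int_R |a(y)|\int_{(R^*)^c}|K(x,y)-K(x,z)|\,dm(x)\,dm(y),
\end{align*}
which is bounded by $\|a\|_1$ times the supremum in \eqref{1star}, hence uniformly by $1$. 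Both estimates combine to $\|Ta\|_1\lesssim 1$, provided we have the geometric bound $m(R^*)\lesssim m(R)$ with constant depending only on $\beta$ and $c$.

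The geometric inequality $m(R^*)\lesssim m(R)$ is the step I expect to be the main obstacle, since $R^*$ is not admissible in general. I would decompose $R^*\setminus R$ into an ``upward'' part (points whose closest vertex in $R$ is the top slice, obtained by predecessors and sideways branches) and a ``downward'' extension below $R$. Using the bounded degree $q(x)\le c$ from Corollary \ref{corollario bounded degree} together with $m(x)\le c\,m(y)$ for $y\in s(x)$, the upward part can be enlarged into a union of at most $O(1)$ trapezoids rooted at a level comparable to that of $x_R$, whose mass is therefore $\lesssim c^{h'}m(x_R)$; since $R$ itself has mass $(h''-h')m(x_R)\gtrsim h'\,m(x_R)$ by admissibility, the ratio is controlled. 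A similar direct count handles the downward part. (Alternatively, one may verify that $R^*$ is contained in a fixed dilation of the envelope $\widetilde{R}$ and invoke $m(\widetilde{R})\le 2\beta\,m(R)$ from Section \ref{s: admtr}.)

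Once $\|Ta\|_1\lesssim 1$ is established, the weak type $(1,1)$ bound is obtained by the standard Calder\'on--Zygmund machinery of Section \ref{subs: CZ}: given $f\in L^1(m)$ and $\alpha>0$, write $f=g+\sum_i b^i$ with trapezoids $E_i$. Then $\|Tg\|_2^2\lesssim \|g\|_\infty \|g\|_1\lesssim \alpha\|f\|_1$ gives Chebyshev control of $\{|Tg|>\alpha/2\}$, while the near/far splitting relative to $E_i^*$, together with $\sum m(E_i^*)\lesssim\sum m(E_i)\le\|f\|_1/\alpha$ and the tail estimate using \eqref{1star} applied to each $b^i$, controls the bad part. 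Marcinkiewicz interpolation with the $L^2$-bound then yields $L^p(m)$-boundedness for $p\in(1,2)$. Finally, for part $(ii)$, the adjoint $T^*$ is $L^2(m)$-bounded and its kernel satisfies \eqref{1star}; applying $(i)$ to $T^*$ yields $T^*:H^1(m)\to L^1(m)$ and $T^*$ bounded on $L^{q'}(m)$ for $1<q'<2$, whose dual statements give $T:L^\infty(m)\to BMO(m)$ (via the $H^1$--$BMO$ duality proved earlier) and $T$ bounded on $L^q(m)$ for $2<q<\infty$.
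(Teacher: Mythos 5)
Your overall architecture coincides with the paper's: the $H^1(m)\to L^1(m)$ bound is obtained by the standard atomic estimate (Cauchy--Schwarz plus $L^2$-boundedness on $R^*$, then the H\"ormander condition \eqref{1star} combined with the cancellation of the atom on $(R^*)^c$), and part $(ii)$ is deduced by passing to the adjoint and dualizing, exactly as in the paper. Two comments.

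First, the step you single out as the main obstacle, $m(R^*)\lesssim m(R)$, is in fact immediate, and neither of the justifications you sketch would survive scrutiny. The exponential count for the ``upward part'' gives $c^{h'}m(x_R)$, which is not comparable to $m(R)=(h''-h')m(x_R)\leq (\beta-1)h'\,m(x_R)$: the ratio $c^{h'}/h'$ blows up. And $R^*$ is not contained in the envelope $\widetilde R$ (nor in a bounded dilation of it) when $h'>\beta$, since $R^*$ reaches level $\ell(x_R)-1$ while $\widetilde R$ stops at level $\ell(x_R)-\lceil h'/\beta\rceil$. The correct observation, which is essentially the only thing the paper's proof writes out explicitly, is that $R^*$ consists precisely of descendants of $x_R$ occupying $h''+h'-O(1)$ consecutive levels (no vertex outside the tent $V_{x_R}$ can lie within distance $h'-1$ of $R$, since any path from such a vertex to $R$ must pass through $x_R$), so by the flow property each level slice has mass $m(x_R)$ and $m(R^*)\leq (h''+h')\,m(x_R)\leq 3\,m(R)$ by the admissibility condition $h''\geq 2h'$. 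With that repaired, your atomic estimate is exactly the paper's, which defers the computation to \cite{ATV1}.

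Second, for $1<p<2$ you propose weak type $(1,1)$ via the Calder\'on--Zygmund decomposition followed by Marcinkiewicz interpolation; the paper instead passes directly from the two endpoint bounds $H^1(m)\to L^1(m)$ and $L^2(m)\to L^2(m)$ through the real interpolation identity $\big[H^1(m),L^{2}(m)\big]_{\theta,p}=L^p(m)$ of Theorem \ref{realintH1Lp2}. Your route is legitimate (the tail estimate for $\sum_i Tb^i$ off $\bigcup_i E_i^*$ again uses \eqref{1star} with $R=E_i$ and the vanishing integral of $b^i$) and yields the additional weak $(1,1)$ conclusion, at the cost of redoing work that the interpolation machinery of Section \ref{s: int} already packages.
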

\begin{proof}
We first observe that given $R=R_{h'}^{h''}(x) \in \mathcal{R}$, we have $m(R^{*})= (h''+h'-1)m(x) \le 3 m(R).$ Thus we can follow verbatim \cite[Th. 3]{ATV1}  and conclude that, if $K$ satisfies \eqref{1star}, then $T$ is bounded from $H^1(m)$ to $L^1(m)$. By Theorem \ref{realintH1Lp2}, it follows that $T$ is bounded on $L^p(m)$, for $1<p<2$. Suppose that $K$ satisfies \eqref{duestar}. Then the kernel $K^*$ of the adjoint operator $T^*$ satisfies \eqref{1star}. By $(i)$, $T^*$ extends to a bounded operator from $H^1(m)$ to $L^1(m)$ and on $L^p(m)$, for $1<p<2.$ By duality it follows that $T$ extends to a bounded operator from $L^\infty(m)$ to $BMO(m)$ and on $L^q(m)$, for $2<q<+\infty$.
\end{proof}

\begin{oss} Theorem \ref{t: intop} applies to suitable spectral multipliers and to first order Riesz transforms associated with a distinguished Laplacian on the homogeneous tree (see \cite[Th. 2.3]{hs} and \cite{ATV1}). We are currently studying the boundedness properties of spectral multipliers and Riesz transforms on more general trees and for more general Laplacians which are self-adjoint with respect to a flow measure. This is work in progress. 
\end{oss}

\bigskip
	
	{\bf{Acknowledgments.}} 	
	Work partially supported by the MIUR project ``Dipartimenti di Eccellenza 2018-2022" (CUP E11G18000350001) and the Progetto GNAMPA 2020 ``Fractional Laplacians and subLaplacians on Lie groups and trees". 
	
	The authors are members of the Gruppo Nazionale per l'Analisi Matema\-tica, la Probabilit\`a e le loro Applicazioni (GNAMPA) of the Istituto Nazionale di Alta Matematica (INdAM).

\bibliographystyle{acm}
{\small
\bibliography{AFMM-vf}}

\end{document}